\renewcommand{\to}{\longrightarrow}
\newcommand{\CCC}{k}
\newcommand{\rad}{\mathop{\mathrm{rad}}}
\newcommand{\J}{\mathrel{\mathscr J}} 
\newcommand{\D}{{\mathscr D}} 
\newcommand{\R}{\mathrel{\mathscr R}} 
\newcommand{\eL}{\mathrel{\mathscr L}} 
\newcommand{\pv}[1]{\mathbf {#1}}
\newcommand{\inv}{^{-1}}
\newcommand{\p}{\varphi}
\newcommand{\ov}[1]{\ensuremath{\overline {#1}}}
\newcommand{\til}[1]{\ensuremath{\widetilde {#1}}}
\newcommand{\wh}{\widehat}
\newcommand{\Irrm}{\mathop{\mathsf{Irr}}\nolimits}
\newcommand{\AIrr}{\mathop{\mathsf{AIrr}}\nolimits}
\newcommand{\Irr}{\mathop{\mathrm{Irr}}\nolimits}
\newcommand{\module}[1]{#1\text{-}\mathrm{mod}}
\newcommand{\Hom}{\mathop{\mathrm{Hom}}\nolimits}
\newcommand{\Ext}{\mathop{\mathrm{Ext}}\nolimits}
\newcommand{\Der}{\mathop{\mathrm{Der}}\nolimits}
\newcommand{\IDer}{\mathop{\mathrm{IDer}}\nolimits}
\newcommand{\nup}[1]{{#1}^{\not\hskip 1pt\uparrow}}
\newcommand{\Null}{\mathop{\mathrm{Null}}\nolimits}
\newcommand{\St}{\mathop{\mathrm{St}}\nolimits}
\newtheorem{Thm}{Theorem}[section]
\newtheorem{Prop}[Thm]{Proposition}
\newtheorem{Lemma}[Thm]{Lemma}
{\theoremstyle{definition}
\newtheorem{Def}[Thm]{Definition}}
{\theoremstyle{remark}
\newtheorem{Rmk}[Thm]{Remark}}
\newtheorem{Cor}[Thm]{Corollary}
{\theoremstyle{remark}
\newtheorem{Example}[Thm]{Example}}
\theoremstyle{remark}
\theoremstyle{remark}
\newtheorem{Case}{Case}}
\theoremstyle{remark}
\numberwithin{equation}{section}
\title{Quivers of monoids with basic algebras}
\author{Stuart Margolis\ and Benjamin Steinberg}
\address{Department of Mathematics\\
Bar Ilan University\\ 52900 Ramat Gan \\ Israel \and Center for Algorithmic and Interactive Scientific Software\\ CCNY \\ CUNY \\ New York\\ New York \and School of Mathematics and Statistics \\ Carleton University \\
1125 Colonel By Drive\\
Ottawa, Ontario  K1S 5B6 \\
Canada}
\thanks{The first author wishes to warmly thank the
Center for Algorithmic and Interactive Scientific Software, CCNY, CUNY for inviting him to be a Visiting Professor during the preparation of this paper. The second author was supported in part by NSERC}
\email{margolis@math.biu.ac.il\and bsteinbg@math.carleton.ca}
\date{\today}
\keywords{quivers, representation theory, monoids, Hochschild-Mitchell cohomology, EI-categories}
\subjclass[2000]{20M25,16G10,05E99}
\begin{document}
\begin{abstract}
We compute the quiver of any finite monoid that has a basic algebra over an algebraically closed field of characteristic zero.  More generally, we reduce the computation of the quiver over a splitting field of a class of monoids that we term rectangular monoids (in the semigroup theory literature the class is known as $\pv{DO}$) to representation theoretic computations for group algebras of maximal subgroups.  Hence in good characteristic for the maximal subgroups, this gives an essentially complete computation.  Since groups are examples of rectangular monoids, we cannot hope to do better than this.

For the subclass of $\R$-trivial monoids, we also provide a semigroup theoretic description of the projective indecomposable modules and compute the Cartan matrix.
\end{abstract}
\maketitle

\section{Introduction}

Whereas the representation theory of finite groups has played a central role in that theory and its applications for more than a century, the same cannot be said for the representation theory of finite semigroups. While the basic parts of representation theory of finite semigroups were developed in the 1950s by Clifford, Munn and Ponizovksy,~\cite[Chapter 5]{CP}, there were not ready made applications of the theory, both internally in semigroup theory and in applications of that theory to other parts of mathematics and science. Thus, in a paper in 1971,~\cite{McAlistersurvey}, the only application mentioned of the theory was Rhodes's use of it to compute the Krohn-Rhodes complexity of a completely regular semigroup~\cite{KRannals,Rhodeschar}.

Over the past few years, this situation has changed. This comes from a number of sources. One is the theory of monoids of Lie type developed by Putcha, Renner and others~\cite{Putcha1,Putcha2,putchasemisimple,LieType}. These monoids can be thought of as finite analogues of linear algebraic monoids~\cite{Putcha,Renner} and their representation theory gives information on their groups of units, which are groups of Lie type~\cite{wonderfulcompact,Putcharep1,Lietypeandgrouprep}.  On a similar note, applications of semigroup representation theory to Schur-Weyl duality can be found in~\cite{Solomonrook,SchurWeyl}.

A second source is the theory of quasi-hereditary algebras~\cite{quasihered}. The algebras of finite (von Neumann) regular monoids provide natural and diverse examples of quasi-hereditary algebras. This was first proved by Putcha~\cite{Putcharep3} and further developed by the two authors of this paper using homological methods~\cite{rrbg}.  However, Nico essentially had noted that semigroup algebras of regular semigroups are quasi-hereditary before the concept was even invented.  Namely, he found a filtration by hereditary ideals (the same used by Putcha) and used it to bound the global dimension of the algebra~\cite{Nico1,Nico2}.

The third source came from applications in related areas, especially in automata and formal language theory, probability theory, algebraic combinatorics and the theory of Coxeter groups and related structures.

In~\cite{AMSV}, Almeida, Volkov and the authors of this paper computed the Rhodes radical of a finite monoid $M$, which is the congruence obtained by restricting the Jacobson radical from the monoid algebra $kM$ of $M$ over a field $k$ to $M$. They used this to solve a number of problems in automata and formal language theory. In particular, they determined the finite monoids $M$ whose algebras $kM$ are split basic over a field $k$ and proved that this collection is a variety of finite monoids in the sense of Eilenberg and Sch\"utzenberger~\cite{Eilenberg,Pinbook,qtheor,Almeida:book}. Recall that a $k$-algebra is split basic if all its irreducible representations are one-dimensional, or equivalently, the algebra has a faithful representation by triangular matrices over $k$. These monoids, and the generalization we dub \emph{rectangular monoids}, and their algebras are the central object of study in this paper.  Further applications of semigroup representation theory to automata theory and transformations semigroups can be found in~\cite{mortality,transformations}.

The faces of a central hyperplane arrangement have the structure of a left regular band, that is, a semigroup satisfying the two identities $x^{2}=x$ and $xy=xyx$, something that was taken advantage of by Bidigare \emph{et al.}~\cite{BHR} and Brown~\cite{Brown1,Brown2} to compute spectra of random walks on hyperplane arrangements, and by Brown and Diaconis~\cite{DiaconisBrown1} to compute stationary distributions and rates of convergence, as well as to prove diagonalizability for these walks.  See~\cite{Brown1,Brown2,DiaconisAthan,bjorner1,bjorner2} for further examples of applications of left regular band algebras to probability.

In particular, the Coxeter complex of any Coxeter group has the structure of a left regular band. Whereas the product in this complex goes back to Tits~\cite{Tits}, it was not until recently that its structure as a semigroup was exploited.  In fact, Tits thought of this product geometrically as a projection.  Formally, he associates to each face $A$ a unary operation $\mathrm{proj}_A$ which sends a face $B$ to its projection onto $A$. In his appendix~\cite{Titsappendix} to Solomon's paper~\cite{SolomonDescent}, he proves that
\[\mathrm{proj}_A(\mathrm{proj}_B(C)) = \mathrm{proj}_{\mathrm{proj}_A(B)}(C).\]
If one defines $AB = \mathrm{proj}_A(B)$, then the last formula is just the
associative law $A(BC)=(AB)C$. Perhaps this notational problem delayed the serious
study of the semigroup theoretic aspects of this product.
Thus, Ken Brown's first edition of his book {\em Buildings}~\cite{Brown1} does not mention the semigroup structure at all, whereas it plays a prominent role in the second edition~\cite{Brown2}. Bidigare also discovered that if one takes the reflection arrangement associated to a finite reflection group $W$, then the $W$-invariants of the algebra of the associated left regular band is precisely Solomon's descent algebra; see~\cite{Brown2} for details.   This led Aguiar \emph{et al}~\cite{Aguiar} to develop an approach to the representation theory of finite Coxeter groups via left regular bands.  Saliola computed the quiver, first of hyperplane face semigroups~\cite{Saliolahyperplane}, and then of a left regular band algebra, and the projective indecomposable modules~\cite{Saliola}; for the former he used homological methods, whereas in the latter case he computed primitive idempotents. Saliola also computed the quiver associated to the descent algebra of a Coxeter group in types $A$ and $B$ via its incarnation as the algebra of invariants of the Coxeter group acting on the algebra of the hyperplane face semigroup~\cite{SaliolaDescent}.  The result in type $A$ was also obtained by Schocker~\cite{schocker}, again using hyperplane face semigroups.  The quiver of a descent algebra in general was computed in~\cite{descentquiver} via other means.

Hsiao~\cite{Hsiao} developed an extension of these results to groups that are wreath products of a finite group with a symmetric group. He showed that the wreath product analogue of Solomon's descent algebra, the Mantaci-Reutenauer descent algebra~\cite{MantReut}, was an algebra of invariants of a left regular band of groups, a monoid whose idempotents form a left regular band and such that every element belongs to a subgroup of $M$. The algebras of such monoids were studied by the authors in~\cite{rrbg}. Hsiao's semigroup, in characteristic zero, does not have a basic algebra unless the group in the wreath product is abelian.  It does belong to the class, we shall introduce, of rectangular monoids. The case of left regular band of groups plays a major role in the current paper.

A second very important monoid associated to a Coxeter group is the monoid associated to its $0$-Hecke algebra. Norton first described the representation theory of the $0$-Hecke algebra of a Coxeter group $W$ in 1979~\cite{Norton}, but did not exploit its structure as the monoid algebra of a monoid $M(W)$; see also the work of Carter~\cite{Carter0Hecke}. The monoid $M(W)$ has been rediscovered many times over the years. The easiest way to define it is as the monoid with generating set the Coxeter generators of $W$ and relations those of $W$ in braid form, and replacing the involution relation $s^{2}=1$ by the idempotent relation $s^{2}=s$ for each Coxeter generator $s$. The monoid $M(W)$ has a number of amazing properties. Its size is exactly that of $W$ and it admits the strong Bruhat order as a partial order compatible with multiplication. In fact, it is isomorphic to the monoid of principal order ideals of the Bruhat order under set multiplication. It also is isomorphic to the monoid structure of Schubert and Bruhat cells of a reductive group~\cite{Richardson}.

From the point of view of this paper, we are interested that the monoid algebra of $M(W)$ is the $0$-Hecke algebra $\mathcal{H}_0(W)$, as can readily be seen from the presentation for $M(W)$ that we mentioned above. The monoid $M(W)$ belongs to the very important class of $\J$-trivial monoids. These are monoids $M$ such that different elements generate different principal ideals of $M$, or in the language of semigroup theory, Green's relation~\cite{Green} $\J$ is the identity relation~\cite{CP,qtheor,Eilenberg,Pinbook}. A $\J$-trivial monoid has a split basic algebra over any field. A number of authors~\cite{Fayers,Denton,Jtrivialpaper,BergeronSaliola,SchockerRtrivial,Kiselmanstuff,Duchampetal} have exploited the monoid structure to elucidate the representation theory of the $0$-Hecke algebra. Our results for rectangular monoids has the case of $\J$-trivial monoids as a very special case.

The purpose of the present paper is to study the representation theory of a common generalization of left regular bands and $\J$-trivial monoids, and indeed a generalization of the class of monoids that have basic algebras. Although this class has been studied by semigroup theorists under the name $\pv{DO}$~\cite{Almeida:book} (which stands for monoids whose regular $\D$-classes are orthodox semigroups, using the language of semigroup theory), we call them here {\em rectangular monoids}. The name reflects the fact that these are the monoids such that each conjugacy class of idempotents (or in semigroup parlance, every $\D$-class of idempotents) is a subsemigroup, which by elementary semigroup theory implies that the conjugacy classes of idempotents of rectangular monoids belong to the class of rectangular bands.

It is a fundamental result, known as the Munn-Ponizovsky Theorem, that the irreducible representations of any finite monoid are parameterized uniquely by a pair consisting of a conjugacy class $\mathcal{C}$ of idempotents and an irreducible representation of a maximal subgroup $G$ at any idempotent in $\mathcal{C}$. See~\cite{myirreps} for a modern proof of this fact. In general, computing the irreducible representation of $M$ from the pair $(\mathcal{C},G)$ is quite complex, requiring an induction process from an irreducible representation of $G$ followed by the computation of and the quotient by the radical of the induced module.

The key representation theoretic property of rectangular monoids is that every irreducible representation of a rectangular monoid $M$ is computed via a retraction onto a maximal subgroup $G$ followed by an irreducible representation of $G$, that is, as an inflation of an irreducible representation of $G$. It follows that every matrix representation of $M$ is then a block triangular monoid of matrices with the blocks being representations of maximal subgroups of $M$ (together with zero). Furthermore, the semisimple image of the algebra of $M$ (in good characteristic) is the direct sum of the algebras of the maximal subgroups of $M$, one summand for each conjugacy class of idempotents of $M$.  In good characteristic this effectively reduces the semisimple part of the representation theory of $M$ to the character theory of its maximal subgroups.

The class of rectangular monoids contains, in addition to the left regular bands and $\J$-trivial monoids mentioned above, many other important classes of monoids (in addition to all finite groups) that have arisen in the literature. These include all $\R$-trivial and $\eL$-trivial monoids~\cite{Eilenberg,qtheor,BergeronSaliola,Pinbook,Almeida:book}, all finite bands and the right and left regular bands of groups studied by the authors in~\cite{rrbg} (and by Hsiao~\cite{Hsiao}).  Schocker studied $\R$-trivial monoids under the name ``weakly ordered semigroups''~\cite{SchockerRtrivial}, as observed by the second author and recorded in~\cite{BergeronSaliola}.  The representation theory of EI-categories~\cite{WebbEI,Luckbook,Webb} can also be viewed as a special case of the representation theory of rectangular monoids. For applications of rectangular monoids to circuit and communication complexity, see~\cite{DenisDO1,DenisDO}.  In~\cite{DiaconisSteinberg}, random walks on regular rectangular monoids are studied, and in particular random walks on Hsiao's monoid are shown to model certain card-shuffling schemes.

Our main goal is to compute the (Gabriel) quiver of the algebra of a rectangular monoid. For rectangular monoids with trivial subgroups (like bands and $\J,\R,\eL$-trivial monoids), the computations essentially reduce to counting the number of equivalence classes for certain equivalence relations on certain subsets of these monoids. For general rectangular monoids $M$, the computation reduces, in good characteristic, to decomposing certain representations of products of maximal subgroups of $M$. Of course, in characteristic zero, this is classically done via character theory. We remark that for $\J$-trivial monoids results were recently obtained in~\cite{Jtrivialpaper} that match ours, but we use different tools that apply to all rectangular monoids.

The main tool we use is the well known identification of the number of arrows of the quiver between two vertices $U,V$ of irreducible $M$-modules over a field $k$ with the dimension of the first Hochschild cohomology space $H^{1}(M,\Hom_k(U,V))$~\cite{CartanEilenberg}. In turn, this space is identified with the space of outer derivations of $M$ in $\Hom_k(U,V)$~\cite{CartanEilenberg}. The structure of rectangular monoids allows for a complete description of this latter space modulo the representation theory of its maximal subgroups, which we take as known.

The paper is organized as follows. First we provide some background on finite dimensional algebras and review Hochschild cohomology and its connection to computing quivers via derivations. Next we note that the cases of bands and more generally, the class of (von Neumann) regular rectangular monoids (known as orthogroups in the semigroup literature~\cite{Almeida:book}) reduce almost immediately to the case of left and right regular bands studied in~\cite{Saliola} in the case of bands, and for regular rectangular monoids with non-trivial subgroups to that of right and left regular bands of groups studied in~\cite{rrbg}. More precisely, we show that the image of a regular rectangular monoid $M$ under the natural map $f\colon M \rightarrow kM/\rad^2(kM)$ is a subdirect product of left and right regular bands  (known as a regular band in the literature~\cite{Almeida:book}) if $M$ is a band, and is a subdirect product of left and right regular bands of groups for general regular rectangular monoids. Since for any finite dimensional algebra $A$, the quiver of $A$ and $A/\rad^{2}(A)$ are the same, the result follows rapidly.

The following section describes our results on quivers, without proof, when restricted to special cases of rectangular monoids that have been considered in the literature: bands, $\J$-trivial monoids, regular rectangular monoids and $\R$-trivial monoids.  Examples are provided.  In~\cite{BergeronSaliola}, a complete set of orthogonal primitive idempotents was constructed for the algebra of an $\R$-trivial monoid.  However, a semigroup theoretic description of the projective indecomposable modules and a computation of the quiver and Cartan matrix were lacking.  For $\J$-trivial monoids, these were computed in~\cite{Jtrivialpaper}.  Here we compute the quiver of an $\R$-trivial monoid, construct the projective indecomposable modules as partial transformation modules (in the sense of~\cite{transformations}) and compute the Cartan matrix, generalizing the case of left regular bands and $\J$-trivial monoids~\cite{Saliola,Jtrivialpaper}.  Our proof is independent of the construction of the primitive idempotents from~\cite{BergeronSaliola}, and in fact primitive idempotents play no role in our proof.

We then proceed to study derivations on rectangular monoids $M$ into arbitrary $M$-bimodules $A$ that are lifted from a pair of maximal subgroups of $M$ and this leads us to our main results. This proof is useful even for the case of bands and regular rectangular monoids mentioned above, as it elucidates the results obtained in~\cite{Saliola, rrbg}. We then establish the results announced in the previous section.

We are very aware that readers of this paper come from different communities of researchers. Representation theorists and algebraic combinatorialists will not necessarily be familiar with basic results of semigroup theory, whereas semigroup theorists may not necessarily be familiar with the representation theory and cohomology theory used here. We have done our best to make the paper understandable to both audiences. In particular, we have avoided as much as possible the use of Green's relations and the Rees Theorem, fundamental results of semigroup theory. Readers with a background in semigroup theory will certainly see where the use of such could simplify some arguments and are invited to do so. However, we felt strongly that proofs that could be understood by a general audience would be useful.

In the literature, many basic results of semigroup theory have been reproved in special cases in many papers applying semigroup theoretic methods. In particular, Brown's notion of the support lattice of a band~\cite{Brown1,Brown2} is nothing more than the maximal semilattice image of a band. The notion of maximal semilattice image goes back at least to Clifford's paper of 1941~\cite{Clifford}, a result that in particular applies to the case of bands. The maximal semilattice image of an arbitrary semigroup has been studied and applied in literally hundreds of semigroup related papers.
See for example,~\cite[Chapter 4]{CP}. Despite this usage, a number of recent papers have rediscovered this notion in special cases~\cite{Brown1,Brown2,BergeronSaliola,Saliola}. As the maximal semilattice image, and more generally the maximal semilattice of groups image, of a rectangular monoid play a crucial part in this paper, we give a completely self-contained construction of the maximal semilattice image of an arbitrary finite monoid.

\section{Finite dimensional algebras and Hochschild-Mitchell cohomology}
In this paper, $k$ will always denote a field.  Algebras over $k$ will be tacitly assumed finite dimensional and unital, except for a brief appearance of semigroup algebras.  We will work with finitely generated left modules, unless otherwise stated.  The category of finitely generated left $A$-modules for a $k$-algebra $A$ will be denoted $\module{A}$.

\subsection{Basic algebras and quivers}
A finite dimensional $k$-algebra is \emph{split basic} if $A/\rad(A)\cong k^n$ for some $n\geq 1$.  In other words, every irreducible representation of $A$ is one-dimensional, or equivalently, $A$ is isomorphic to an algebra of upper triangular matrices over $k$.  When $k$ is algebraically closed, then every finite dimensional algebra is Morita equivalent to a unique (up to isomorphism) (split) basic algebra~\cite{assem}. More generally, a finite dimensional $k$-algebra $A$ is said to \emph{split} over $k$ if $A/\rad(A)$ is isomorphic to a product of matrix algebras over $k$, that is, each irreducible representation of $A$ over $k$ is absolutely irreducible.  If $A$ splits over $k$, then $A$ is Morita equivalent to a unique split basic algebra.

The (Gabriel) \emph{quiver} $Q(A)$ of a finite dimensional $k$-algebra $A$, which is split over $k$, is the directed graph whose vertex set is the set of isomorphism classes of simple left $A$-modules.  The number of arrows from $S_i$ to $S_j$ is $\dim \Ext^1_k(S_i,S_j)$.

If $Q$ is a quiver (equals directed graph), then the path semigroup $P(Q)$ of $Q$ consists of all paths in $Q$ together with a multiplicative zero.  The product of paths is their concatenation, when defined, and otherwise is zero.  The path algebra $kQ$ is the contracted semigroup algebra of $P(Q)$. Recall that the contracted semigroup algebra of a semigroup with zero $z$, is the quotient of its usual semigroup algebra by the ideal generated by $z$~\cite{CP}.  The arrow ideal $J_Q$ of $kQ$ is the ideal generated by all edges of the quiver.  An ideal $I$ of $kQ$ is \emph{admissible} if, for some $n\geq 2$, $J_Q^n\subseteq I\subseteq J_Q^2$.  The algebra $kQ/I$ is then a finite dimensional split basic $k$-algebra with quiver $Q$.  On the other hand, the basic algebra associated to a finite dimensional algebra $A$ split over $k$ is $kQ(A)/I$ for some admissible ideal $I$~\cite{assem,benson}.  This is one motivation for computing the quiver of an algebra.

The quiver of a split basic algebra $B$ also can be viewed as an encoding of the $2$-dimensional representation theory of $B$ because a degree $2$ representation of $B$ is precisely an extension of a simple module by a simple module.

Throughout this paper, $M$ will always denote a finite monoid.  The opposite monoid is denoted $M^{op}$.  The monoid algebra of $M$ over $k$ is denoted $kM$.  We say that $k$ is a \emph{splitting field} for $M$, or that $M$ \emph{splits} over $k$, if $kM$ splits over $k$.  Throughout this paper, we shall always be working in the context of a fixed field $k$ and so by an \emph{$M$-bimodule} we mean a $kM$-bimodule.  Similarly, we speak of left and right $M$-modules.    We frequently view $M$-bimodules as $M\times M^{op}$-modules.  Also, we shall identify left $M^{op}$-modules with right $M$-modules.  For a group $G$, one has that $G$ is isomorphic to $G^{op}$ via inversion.  In particular, every right $G$-module $A$ can be viewed as a left $G$-module by putting $ga=ag\inv$.  Nonetheless, we will continue to use the notation $G^{op}$ when we are emphasizing that the action of $G$ is on the right.  If we want to view a $G^{op}$-module $A$ as a $G$-module via the inversion, then we will switch notation and explicitly refer to $A$ as a $G$-module.

If $G$ is a group and $U$ is a $G$-module, then the \emph{contragredient} module is $U^*=\Hom_k(U,k)$ with action given by $gf(u)=f(g\inv u)$ for $g\in G$ and $u\in U$.

As usual, if $\p\colon M\to M_n(k)$ is a linear representation, then the \emph{character} of $\p$ is the mapping $M\to k$ defined by $m\mapsto \mathrm{tr}(\p(m))$ where $\mathrm{tr}$ denotes the trace of a matrix.

\subsection{Hochschild-Mitchell cohomology}
We now summarize some results about Hochschild-Mitchell cohomology that will be relevant to computing quivers of monoids.

The idea of using Hochschild cohomology was motivated by trying to compute $\Ext^1_{kM}(S_1,S_2)$ where $S_1,S_2$ are simple $kM$-modules in order to compute the quiver of $kM$.  This amounts to describing up to equivalence the $M$-modules $V$ forming an exact sequence \[0\longrightarrow S_2\longrightarrow V\longrightarrow S_1\longrightarrow 0.\]  If $\pi_1,\pi_2$ are the matrix representations corresponding to $S_1$ and $S_2$, then the representation $\pi$ associated to $V$ has the block form
\[\begin{pmatrix} \pi_2 & d\\ 0 & \pi_1\end{pmatrix}\] where $d\colon M\to \Hom_k(S_1,S_2)$ satisfies $d(mn) = \pi_2(m)d(n)+d(m)\pi_1(n)$.  That is, $d$ is a derivation of $M$ in $\Hom_k(S_1,S_2)$.

Equivalent extensions are obtained via conjugation by a matrix of the form \[\begin{pmatrix} 1 & a \\ 0 &1\end{pmatrix}\] with $a\in \Hom_k(S_1,S_2)$.  The corresponding derivations then differ by an inner derivation, i.e., one of the form $m\mapsto \pi_2(m)a-a\pi_1(n)$.  Thus we have \[\Ext^1_{kM}(S_1,S_2)\cong \mathrm{Der}(\Hom_k(S_1,S_2))/\mathrm{IDer}(\Hom_k(S_1,S_2))\] where the right hand vector space is the space of derivations modulo inner derivations.  This is in fact the Hochschild-Mitchell cohomology space $H^1(M,\Hom_k(S_1,S_2))$.  Let us recall the definitions.

Let $A$ be an $M$-bimodule and put $C^n(M,A)=\{f\colon M^n\to A\}$ where we are just considering arbitrary maps $f\colon M^n\to A$.  Then $C^n(M,A)$ is a $k$-vector space.  Define the coboundary map $d^n\colon C^n(M,A)\to C^{n+1}(M,A)$ by
\begin{align*}
d^n(f)(m_1,\ldots, m_{n+1}) &= m_1f(m_2,\ldots, m_{n+1})\\ &+\sum_{i=1}^n(-1)^if(m_1,\ldots, m_im_{i+1},\ldots,m_{n+1})\\ &+(-1)^{n+1}f(m_1,\ldots,m_n)m_{n+1}
\end{align*}
One can verify that $(C^{\bullet}(M,A),d^n)$ is a chain complex. The \emph{Hochschild-Mitchell cohomology} of $M$ with coefficients in $A$ is defined by \[H^{\bullet}(M,A)=H^{\bullet}(C^\bullet(M,A));\]
see~\cite[Chapter IX]{CartanEilenberg} and~\cite{ringoids}.  The Hochschild-Mitchell cohomology is defined in general for additive categories, but for monoids the chain complex simplifies to the form given above.

For instance, $d^0(a)(m) = ma-am$ and \[d^1(f)(m_1,m_2)=m_1f(m_2)-f(m_1m_2)+f(m_1)m_2.\]  Thus \[H^0(M,A) =\{a\in A\mid ma=am, \forall m \in M\}\] is the space of \emph{$M$-invariants} of $A$.  A \emph{derivation} of $M$ in $A$ is a map $d\colon M\to A$ such that \[d(m_1m_2)= m_1d(m_2)+d(m_1)m_2.\]  A derivation is \emph{inner} if it is of the form $d_a$ for some $a\in A$, where $d_a(m) = ma-am$.  It is easy to see that $1$-cocyles are precisely derivations and $1$-coboundaries are inner derivations.  Denoting the space of derivations by $\Der(M,A)$ and that of inner derivations by $\IDer(M,A)$, we have
\[H^1(M,A)=\Der(M,A)/\IDer(M,A).\]  Often elements of $\Der(M,A)/\IDer(M,A)$ are called \emph{outer derivations}.

It is known that \[H^{\bullet}(M,A)\cong \Ext^{\bullet}_{k[M\times M^{op}]}(kM,A)\] cf.~\cite[Chapter~IX]{CartanEilenberg}.

Let $V,W$ be $M$-modules.  Then $\Hom_k(V,W)$ is naturally an $M$-bimodule where $(mf)(v)=mf(v)$ and $(fm)(v)= f(mv)$.  The following theorem is~\cite[Chapter~IX, Corollary~4.4]{CartanEilenberg}.

\begin{Thm}\label{hochschildvsext}
Let $V,W$ be $M$-modules.  Then \[\Ext^{\bullet}_{kM}(V,W)\cong H^{\bullet}(M,\Hom_k(V,W)).\]
\end{Thm}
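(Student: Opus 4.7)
The plan is to prove the theorem by using the two-sided bar resolution of $kM$ and reinterpreting it, via tensor-hom adjunction, as a projective resolution of $V$. First I would recall the bar resolution $B_\bullet \to kM$, where
\[
B_n = kM \otimes_k (kM)^{\otimes n} \otimes_k kM,
\]
viewed as a $kM$-bimodule via its outer tensor factors, with the standard alternating-sum differential. This is a free resolution of $kM$ in the category of $kM$-bimodules (equivalently, $k[M\times M^{op}]$-modules). Applying $\Hom_{k[M\times M^{op}]}(-,\Hom_k(V,W))$ and passing to the normalized subcomplex yields exactly the complex $C^\bullet(M,\Hom_k(V,W))$ defined in the excerpt, so by the standing isomorphism $H^\bullet(M,A)\cong \Ext^\bullet_{k[M\times M^{op}]}(kM,A)$, this computes $H^\bullet(M,\Hom_k(V,W))$.

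Next I would tensor the bar resolution on the right with $V$ over $kM$ to obtain a complex of left $kM$-modules
\[
B_n \otimes_{kM} V \;\cong\; kM \otimes_k (kM)^{\otimes n} \otimes_k V.
\]
Each term is free as a left $kM$-module (via the outermost $kM$-factor), and the fact that $kM$ is $k$-flat means tensoring over $kM$ preserves exactness of the bar complex in positive degrees; hence this is a projective resolution of $V$ as a left $kM$-module. Therefore $\Ext^\bullet_{kM}(V,W)$ is the cohomology of the complex obtained by applying $\Hom_{kM}(-,W)$ to this resolution.

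The crucial identification is then the chain of natural isomorphisms
\[
\Hom_{kM}\bigl(kM \otimes_k (kM)^{\otimes n}\otimes_k V,\, W\bigr) \cong \Hom_k\bigl((kM)^{\otimes n}\otimes_k V,\,W\bigr) \cong \Hom_k\bigl((kM)^{\otimes n},\,\Hom_k(V,W)\bigr),
\]
the first being a free-module computation and the second the standard tensor-hom adjunction. Restricting to the normalized subcomplex identifies the right-hand space with $C^n(M,\Hom_k(V,W))$.

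What I expect to be the main obstacle is verifying that this identification is a map of cochain complexes, i.e.\ that the induced differential on the right-hand side matches the Hochschild--Mitchell coboundary $d^n$ from the excerpt, signs included. This reduces to unpacking how the outermost left and right actions of $M$ on $B_n$ translate under the adjunction into actions on $\Hom_k(V,W)$: the left action of $m\in M$ on $B_n$ becomes the left action $(mf)(v)=mf(v)$, while the right action of $m$ on $B_n$, which interacts with $V$ by $m\otimes v\mapsto mv$, becomes the right action $(fm)(v)=f(mv)$ on $\Hom_k(V,W)$. These are precisely the bimodule structures used in the definition of $C^\bullet(M,\Hom_k(V,W))$, so once the adjunctions are tracked carefully on simple tensors, the boundary terms $m_1 f(m_2,\dots)$, the internal sum $\sum(-1)^i f(\dots,m_im_{i+1},\dots)$, and the final term $(-1)^{n+1}f(m_1,\dots,m_n)m_{n+1}$ all appear in the correct positions with the correct signs. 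Combining Steps yields the isomorphism $\Ext^\bullet_{kM}(V,W)\cong H^\bullet(M,\Hom_k(V,W))$.
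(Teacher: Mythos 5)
Your argument is correct and is essentially the standard proof of this fact; the paper itself gives no proof but simply cites \cite[Chapter~IX, Corollary~4.4]{CartanEilenberg}, and your bar-resolution-plus-adjunction argument is the one found there (the hypothesis that $V$ be $k$-projective, needed to make $B_\bullet\otimes_{kM}V$ a projective resolution, is automatic over a field). Two small points deserve tightening. First, no passage to the normalized subcomplex is needed or wanted: since $M^n$ is a $k$-basis of $(kM)^{\otimes n}$, the unnormalized complex $\Hom_{k[M\times M^{op}]}(B_\bullet,\Hom_k(V,W))$ is \emph{already} literally $C^\bullet(M,\Hom_k(V,W))$ as defined in the paper (arbitrary maps $M^n\to A$); the normalized subcomplex is a proper quasi-isomorphic subcomplex, so invoking it is harmless but the word ``exactly'' is then wrong. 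Second, ``$kM$ is $k$-flat'' is not the reason that $-\otimes_{kM}V$ preserves exactness of the augmented bar complex; the correct reason is that the contracting homotopy $s(a_0\otimes\cdots\otimes a_{n+1})=1\otimes a_0\otimes\cdots\otimes a_{n+1}$ is right $kM$-linear, so the augmented complex is split exact as a complex of right $kM$-modules and remains exact after tensoring with any left module. With those two repairs the proof is complete.
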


We have the following important special case, a direct proof of which we sketched earlier.
\begin{Cor}\label{derivationcomputation}
If $U,V$ are left $kM$-modules, then \[\Ext^1_{kM}(U,V) \cong \Der(M,\Hom_k(U,V))/\IDer(M,\Hom_k(U,V)).\]
\end{Cor}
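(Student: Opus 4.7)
The plan is essentially to combine Theorem~\ref{hochschildvsext}, applied in degree $1$, with the explicit description of $H^1(M,-)$ in terms of outer derivations that was recorded just before the statement. First I would invoke Theorem~\ref{hochschildvsext} with the coefficient bimodule taken to be $\Hom_k(U,V)$, endowed with the $M$-bimodule structure $(mf)(u)=mf(u)$ and $(fm)(u)=f(mu)$. This immediately yields
\[\Ext^1_{kM}(U,V) \cong H^1(M,\Hom_k(U,V)).\]

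Next I would unwind the right-hand side using the explicit coboundary formulas. From
\[d^1(f)(m_1,m_2) = m_1f(m_2) - f(m_1m_2) + f(m_1)m_2,\]
the space $Z^1$ of $1$-cocycles consists precisely of maps $d\colon M\to \Hom_k(U,V)$ satisfying $d(m_1m_2)=m_1 d(m_2)+d(m_1)m_2$, i.e.\ $Z^1=\Der(M,\Hom_k(U,V))$. Similarly, from $d^0(a)(m)=ma-am$, the space $B^1$ of $1$-coboundaries consists of maps of the form $d_a(m)=ma-am$, i.e.\ $B^1=\IDer(M,\Hom_k(U,V))$. Therefore
\[H^1(M,\Hom_k(U,V)) \;=\; \Der(M,\Hom_k(U,V))/\IDer(M,\Hom_k(U,V)),\]
and substituting this into the previous isomorphism gives the claim.

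There is essentially no obstacle: the entire content is packaged into Theorem~\ref{hochschildvsext} together with the definition of Hochschild--Mitchell cohomology given via the bar-like complex $C^\bullet(M,A)$. In particular, no properties of $U$ or $V$ beyond being $kM$-modules are used. The informal sketch preceding the corollary in the text already gives an alternative, representation-theoretic proof by directly matching extensions \[0\to V\to W\to U\to 0\] against derivations modulo inner derivations via the block form $\bigl(\begin{smallmatrix}\pi_V & d\\ 0 & \pi_U\end{smallmatrix}\bigr)$; I would only mention this to reassure the reader that the cohomological isomorphism of Theorem~\ref{hochschildvsext} specializes to the expected combinatorial parametrization of extensions in degree $1$.
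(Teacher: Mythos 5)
Your proposal is correct and follows the paper's own route exactly: the corollary is obtained by specializing Theorem~\ref{hochschildvsext} to degree $1$ with coefficients in the bimodule $\Hom_k(U,V)$ and then using the identification, already recorded in the text via the formulas for $d^0$ and $d^1$, of $H^1(M,A)$ with $\Der(M,A)/\IDer(M,A)$. Nothing further is needed.
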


Our strategy to compute quivers will then be to understand the space of outer derivations.

The following observation will be useful.

\begin{Prop}\label{grouphochiszero}
Let $G$ be a finite group and suppose that the characteristic of the field $k$ does not divide the order of $G$.  Then $H^n(G,A)=0$ for $n\geq 1$ and all $G$-bimodules $A$.
\end{Prop}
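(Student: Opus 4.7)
The plan is to invoke the identification $H^\bullet(G,A)\cong \Ext^\bullet_{k[G\times G^{op}]}(kG,A)$ recalled just before the statement, and then reduce vanishing of these Ext groups to semisimplicity of the enveloping algebra $k[G\times G^{op}]$. Once that algebra is known to be semisimple, every module over it is projective, so $\Ext^n$ of anything vanishes for $n\geq 1$, and the result is immediate.

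The main step is therefore to show that $k[G\times G^{op}]$ is semisimple under our hypothesis on the characteristic. For this I would observe that $G^{op}$ is isomorphic as a group to $G$ via inversion $g\mapsto g\inv$, so $G\times G^{op}\cong G\times G$ as groups, and consequently $k[G\times G^{op}]\cong k[G\times G]$ as $k$-algebras (one could equivalently argue via the natural isomorphism $k[G\times G^{op}]\cong kG\otimes_k (kG)^{op}\cong kG\otimes_k k[G^{op}]$). Since the characteristic of $k$ does not divide $|G|$, it also fails to divide $|G\times G|=|G|^2$, and Maschke's theorem applies to give semisimplicity of $k[G\times G]$, hence of $k[G\times G^{op}]$.

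With semisimplicity in hand, every $k[G\times G^{op}]$-module is projective (and injective), so for every $G$-bimodule $A$ and every $n\geq 1$ we have $\Ext^n_{k[G\times G^{op}]}(kG,A)=0$. Combining this with the identification from the previous theorem yields $H^n(G,A)=0$ for all $n\geq 1$ and all bimodules $A$, as desired.

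No genuine obstacle is expected: the only thing to be a little careful about is the passage from $G^{op}$ to $G$, to make sure one really does land inside Maschke's hypothesis on a group algebra rather than leaving us with a tensor product of semisimple algebras (which is not automatically semisimple in general). The inversion isomorphism is exactly what converts the enveloping algebra into an honest finite group algebra and removes any such concern.
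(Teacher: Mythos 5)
Your proposal is correct and follows essentially the same route as the paper: both identify $H^n(G,A)$ with $\Ext^n_{k[G\times G^{op}]}(kG,A)$ and deduce vanishing from semisimplicity of $k[G\times G^{op}]$ (the paper states this semisimplicity without comment, while you justify it via the inversion isomorphism $G^{op}\cong G$ and Maschke's theorem, which is exactly the right justification). Nothing further is needed.
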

\begin{proof}
As $k[G\times G^{op}]$ is a semisimple algebra, the $G\times G^{op}$-module $kG$ is projective.  Thus $H^n(G,A)=\Ext^n_{k[G\times G^{op}]}(kG,A)=0$ for $n\geq 1$.
\end{proof}

\section{Rectangular monoids}
Let $M$ be a finite monoid.  The set of idempotents of $M$ is denoted $E(M)$; more generally, if $X\subseteq M$, then put $E(X)=E(M)\cap X$.  If $M$ is not a group, then $E(M)$ has at least two elements. Idempotents are partially ordered by setting $e\leq f$ if $ef=e=fe$.  If $m\in M$, then there is a unique idempotent in the subsemigroup generated by $m$; it is denoted $m^{\omega}$.  One always has $m^{\omega}=m^{n!}$ where $n=|M|$, cf.~\cite{Almeida:book}.

Recall that if $M$ is a monoid, then a (left) $M$-set is a set equipped with a left action of $M$ by mappings.  Following the same convention we have used for modules, right $M$-sets will be identified with $M^{op}$-sets.

Next, we recall the definition of the Karboubi envelope (also called Cauchy completion or idempotent splitting) of a monoid (the notion exists more generally for categories~\cite{Borceux1,ElkinsZilber}).
If $M$ is a monoid, the \emph{Karoubi envelope} of $M$ is the small category $\mathscr K(M)$ with object set $E(M)$ and arrow set consisting of all triples $(e,m,f)$ with $m\in fMe$.  The arrow $(e,m,f)$ has domain $e$ and codomain $f$.  Composition is given by \[(f,m,g)(e,n,f)=(e,mn,g).\]  The identity at the object $e$ is the triple $(e,e,e)$.  The hom set $\mathscr K(M)(e,f)$ can fruitfully be identified with the set $fMe$ by remembering only the middle coordinate of a triple.  Under this identification,  composition is given by the multiplication of $M$ and the identity at $e$ is $e$, itself.  We will in general avoid the cumbersome triple notation and write $m\colon e\to f$ if it is unclear from the context that we are thinking of $m$ as an element of $\mathscr K(M)(e,f)$. The following proposition is well known, but for lack of a precise reference, we include a proof.

\begin{Prop}\label{splitmonosKaroubi}
A morphism $m\in \mathscr K(M)(e,f)$ is a split monomorphism if and only if $Me=Mm$, a split epimorphism if and only if $fM=mM$ and an isomorphism if and only if $Me=Mm$, $fM=mM$.
\end{Prop}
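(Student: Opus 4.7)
The plan is to prove the three equivalences by leveraging the observation that for any arrow $m\colon e\to f$ of $\mathscr K(M)$, the requirement $m\in fMe$ immediately forces $fm=m=me$. Consequently one always has the containments $Mm\subseteq Me$ and $mM\subseteq fM$ inside $M$, and since $e=e\cdot e\in Me$, the identity $Me=Mm$ is equivalent to the single condition $e\in Mm$ (and dually $fM=mM$ is equivalent to $f\in mM$).

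For the split monomorphism claim, I would proceed as follows. If $m$ is a split monomorphism in $\mathscr K(M)$, then there is an arrow $n\colon f\to e$, i.e.\ an element $n\in eMf$, with $nm=e$; this displays $e$ as an element of $Mm$, giving $Me\subseteq Mm$, whence $Me=Mm$. Conversely, assume $Me=Mm$. Then $e\in Mm$, so we can pick $x\in M$ with $xm=e$. The candidate splitting is $n:=exf$, which lies in $eMf$ and is therefore a legitimate morphism $f\to e$ in the Karoubi envelope. Using $fm=m$ and $xm=e$, one checks
\[
nm = exfm = exm = e(xm) = e\cdot e = e,
\]
so $n$ splits $m$. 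The split epimorphism statement is proved by the mirror argument, replacing $x$ by some $y$ with $my=f$ and taking the splitting to be $eyf$ (more precisely, $n':=eyf\in eMf$, and one verifies $mn'=f$ using $me=m$).

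For the final claim about isomorphisms, the conditions $Me=Mm$ and $fM=mM$ together guarantee $m$ is both a split monomorphism and a split epimorphism. In any category, a morphism that is simultaneously split monic and split epic is an isomorphism: if $n,n'\colon f\to e$ satisfy $nm=e$ and $mn'=f$, then
\[
n = n\cdot f = n(mn') = (nm)n' = e\cdot n' = n',
\]
so the two one-sided inverses coincide into a genuine two-sided inverse. Conversely, any isomorphism is tautologically both split monic and split epic, giving the converse direction.

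I do not anticipate a real obstacle here; the only mildly delicate point is the bookkeeping that requires the splittings to be morphisms in $\mathscr K(M)$, i.e.\ to lie in $eMf$, which is why one must multiply the chosen $x$ (or $y$) on the left by $e$ and on the right by $f$ rather than using $x$ itself. The rest is a direct translation between ideal containments in $M$ and the categorical notions in $\mathscr K(M)$.
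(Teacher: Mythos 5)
Your proof is correct and follows the same route as the paper: a splitting $n\in eMf$ with $nm=e$ forces $e\in Mm$ and hence $Me=Mm$ (the reverse containment being automatic from $m\in fMe$), with the epimorphism case dual and isomorphisms being exactly the morphisms that are both split monic and split epic. The only difference is that you also write out the converse implications (constructing the splittings $exf$ and $eyf$ from $e=xm$ and $f=my$), which the paper's proof leaves implicit; your bookkeeping there is accurate.
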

\begin{proof}
Suppose that $m$ is a split monomorphism.  Then there is morphism $n\in \mathscr K(M)(f,e)$ such that $nm=e$.  Thus $Me\subseteq Mm$.  But $Mm\subseteq Me$ by definition of $\mathscr K(M)(e,f)$.  The case of split epimorphisms is dual and isomorphisms are precisely those morphisms that are simultaneously split epimorphisms and split monomorphisms.
\end{proof}

\begin{Rmk}
In the parlance of Green's relations~\cite{Green,CP,qtheor} one has that $m\in \mathscr K(M)(e,f)$ is a split monomorphism (epimorphism) if and only if $m\eL e$ ($m\R f$) and hence is an isomorphism if and only if $e\eL m\R f$.  Thus isomorphism classes of idempotents correspond to what are called $\mathscr D$-classes in the semigroup theoretic literature.
\end{Rmk}

The importance of the Karoubi envelope is that it is equivalent to the category of left $M$-sets of the form $Me$ with $e\in E(M)$, which is in fact equivalent to the category of projective indecomposable $M$-sets; see~\cite{ElkinsZilber}, where the situation is studied in the more general context of presheaves on a category.

The following is a piece of classical finite semigroup theory, cf.~\cite{Arbib,qtheor}, or see~\cite{transformations} for a presentation intended for combinatorialists.

\begin{Prop}\label{Dequiv}
Let $M$ be a finite monoid and let $A=kM$ with $k$ a field. Let $e,f\in E(M)$.
Then the following are equivalent:
\begin{enumerate}
\item $Me\cong Mf$ as left $M$-sets;
\item $eM\cong fM$ as right $M$-sets;
\item there exists $a,b\in M$ such that $ab=e$ and $ba=f$;
\item there exists $x\in fMe$ and $x'\in eMf$ such that $x'x=e$, $xx'=f$;
\item the objects $e$ and $f$ of $\mathscr K(M)$ are isomorphic;
\item $MeM=MfM$;
\item $Ae\cong Af$ as left $A$-modules;
\item $eA\cong fA$ as right $A$-modules;
\item $e=ufu\inv$ for some unit $u\in A$;
\item the two-sided ring-theoretic ideals generated by $e$ and $f$ in $A$ coincide;
\item all irreducible characters of $M$ over $k$ agree on $e$ and $f$.
\end{enumerate}
\end{Prop}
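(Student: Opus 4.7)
The plan is to partition the eleven conditions into three natural clusters---the semigroup/set-theoretic conditions (1)--(6), the module/ring-theoretic conditions (7)--(10), and the character-theoretic condition (11)---prove the equivalences within each cluster, and then bridge the clusters via the identities $AeA = k(MeM)$ and $A/\rad(A)\cong\prod_i M_{n_i}(D_i)$.

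Inside the semigroup cluster I would first dispatch $(4)\Leftrightarrow(5)\Leftrightarrow(1)\Leftrightarrow(2)$ by using the adjunction $\Hom_M(Me,X)\cong\{x\in X:ex=x\}$, which identifies $\Hom_M(Me,Mf)$ with $eMf$; an isomorphism of left $M$-sets $Me\cong Mf$ then translates verbatim into the data $x\in fMe$, $x'\in eMf$ with $x'x=e$, $xx'=f$ of (4), while (5) is just the definition of isomorphism in $\mathscr{K}(M)$ (and already recorded in Proposition~\ref{splitmonosKaroubi}); (2) is the right-handed dual of (1). The equivalence $(3)\Leftrightarrow(4)$ is elementary: given $ab=e$, $ba=f$, replace $a,b$ by $a'=eaf\in eMf$ and $b'=fbe\in fMe$ and use $e=(ab)^2=a(ba)b=afb$ (together with its symmetric version) to verify $a'b'=e$ and $b'a'=f$. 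The implication $(3)\Rightarrow(6)$ is immediate from $e=afb\in MfM$, whereas its converse $(6)\Rightarrow(3)$ is the crucial finiteness step: writing $e=xfy$, $f=zew$ yields $e=(xz)e(wy)$, so $e=set$ for the idempotents $s=(xz)^\omega$ and $t=(wy)^\omega$, from which the classical stabilization argument (equivalently, the theorem $\mathscr{J}=\mathscr{D}$ for finite monoids) produces $a\in eMf$, $b\in fMe$ with $ab=e$, $ba=f$.

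For the ring cluster I would prove the cycle $(9)\Rightarrow(7),(8),(10)\Rightarrow(9)$. The implications from (9) are immediate since conjugation by a unit is an algebra automorphism. The key step $(7)\Rightarrow(9)$ uses Krull-Schmidt: from the decompositions $A=Ae\oplus A(1-e)=Af\oplus A(1-f)$ together with $Ae\cong Af$, cancellation gives $A(1-e)\cong A(1-f)$, and gluing the two isomorphisms yields a left-module automorphism of $A$ sending $Ae$ onto $Af$. Such an automorphism is right multiplication by a unit $u$, and the compatibility conditions $eu\in Af$ and $(1-e)u\in A(1-f)$ force $uf=eu$, i.e., $ufu\inv=e$; the case $(8)\Rightarrow(9)$ is symmetric. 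The bridge $(6)\Leftrightarrow(10)$ is essentially free: $AeA$ is the $k$-linear span of $MeM$, and $MeM$ is a $k$-basis of this ideal, so set equality $MeM=MfM$ is the same as ring-ideal equality $AeA=AfA$.

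Finally, $(9)\Rightarrow(11)$ is the trace-conjugation property of characters, and $(11)\Rightarrow(9)$ is argued via the semisimple quotient $B=A/\rad(A)\cong\prod_i M_{n_i}(D_i)$: each irreducible character $\chi_i$ evaluates at an idempotent $\bar x\in B$ to $[D_i:k]$ times the $D_i$-rank of its $i$-th component, so in good characteristic equality of all characters at $e$ and $f$ determines the conjugacy class of $\bar e$ in $B$, and the standard idempotent-lifting argument for finite dimensional algebras promotes this to conjugacy in $A$. I expect the main obstacle to be $(6)\Rightarrow(3)$, which is the only genuinely finite-semigroup step in the argument and which is essentially the classical coincidence of Green's $\mathscr{J}$- and $\mathscr{D}$-relations in a finite monoid; the character implication $(11)\Rightarrow(9)$ is the second-most delicate point, as it hinges on a careful analysis of traces over the possibly non-split central simple factors of $B$.
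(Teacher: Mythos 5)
Your overall architecture is reasonable and most of the individual arguments you supply are correct: the adjunction $\Hom_M(Me,X)\cong eX$ does identify isomorphisms $Me\to Mf$ with the data of (4); the manipulation $e=afb$, $f=bea$ correctly reduces (3) to (4); the Krull--Schmidt cancellation proof of $(7)\Rightarrow(9)$ is the standard one and is complete as you sketch it; and $AeA=k[MeM]$ gives $(6)\Leftrightarrow(10)$ for free. (The paper itself offers no proof of this proposition --- it is cited as classical semigroup theory --- so I am judging the proposal on its own terms.) There are, however, two genuine gaps.

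First, as written your implication graph never gets from the semigroup cluster $\{(1)$--$(6),(10)\}$ back into the module cluster $\{(7),(8),(9)\}$. You list ``$(10)\Rightarrow(9)$'' as part of your ring-theoretic cycle but never prove it, and no purely ring-theoretic proof exists: in $M_2(k)$ the idempotents $e_{11}$ and $1$ generate the same two-sided ideal without being conjugate, so $(10)\Rightarrow(9)$ is special to idempotents lying in $M\subseteq kM$ and must pass back through the monoid. You need one explicit arrow such as $(4)\Rightarrow(7)$: if $x\in fMe$ and $x'\in eMf$ satisfy $x'x=e$, $xx'=f$, then right multiplication by $x'$ and by $x$ are mutually inverse left $A$-module maps between $Ae$ and $Af$. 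With that one line added, conditions (1)--(10) are all linked. (A smaller quibble in the same cluster: in $(6)\Rightarrow(3)$ your idempotents $s=(xz)^{\omega}$, $t=(wy)^{\omega}$ make no reference to $f$, so it is unclear how they ``produce'' the required $a,b$; the clean route is to apply the paper's stability proposition to get $e\eL fy\R f$ and manufacture $a,b$ from there. Since you ultimately invoke $\J=\D$, which the paper also treats as classical, I count this as presentational rather than a gap.)

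Second, and more seriously, your proof of $(11)\Rightarrow(9)$ is explicitly conditional on ``good characteristic,'' while the proposition is asserted for an arbitrary field $k$. In characteristic $p$ the character of an idempotent on a simple module records only $\dim_k(eV)$ modulo $p$, so equality of all irreducible characters at $e$ and $f$ does not determine the Wedderburn ranks of $\ov e$ and $\ov f$ in $A/\rad(A)$, and your trace analysis genuinely breaks. The repair goes through the Clifford--Munn--Ponizovsky parametrization of the simple modules (see the paper's reference \cite{myirreps}): if $MeM\not\subseteq MfM$, take the simple module $W$ whose apex is the $\J$-class of $e$ and for which $eW$ is the \emph{trivial} $G_e$-module; then $\chi_W(e)=\dim_k(eW)\cdot 1_k=1_k$, whereas $f$ annihilates $W$, so $\chi_W(f)=0$. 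This separates non-conjugate idempotents over every field and closes $(11)\Rightarrow(6)$ unconditionally, after which $(11)$ joins the other ten conditions via the already established equivalences.
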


We say that the idempotents $e,f$ are \emph{conjugate} if the equivalent conditions of Proposition~\ref{Dequiv} hold. We remark that finiteness of $M$ is need for several parts of the equivalence. In semigroup parlance, we are essentially stating the fact that Green's relations $\J$ and $\D$ coincide for finite semigroups~\cite{Green}.

If $e\in E(M)$, then $eMe$ is a monoid with identity $e$ and hence has a group of units $G_e$, called the \emph{maximal subgroup} of $M$ at $e$.  One can check that, just as in ring theory, $eMe$ is the endomorphism monoid of the left $M$-set $Me$ (and the right $M$-set $eM$) and $G_e$ is its automorphism group, cf.~\cite{transformations}.  Thus if $e,f$ are conjugate idempotents, then $eMe\cong fMf$, whence $G_e\cong G_f$.

\subsection{The support lattice}
An important role in the representation theory of monoids with a basic algebra is played by the support lattice of the monoid.  Let us define the concept.

An \emph{ideal} of a monoid $M$ is a subset $I$ such that $MI\cup IM\subseteq M$.  We allow in this paper the empty set to be considered an ideal.  Each finite monoid has a unique minimal non-empty ideal, which is referred to as the \emph{minimal ideal} of $M$.  The minimal ideal is principal and is generated by a conjugacy class of idempotents, cf.~\cite{Arbib} or~\cite[Appendix A]{qtheor}.  An ideal $P$ is \emph{prime} if $M\setminus P$ is a submonoid.  In particular, $P$ must be proper.  We remark that the empty ideal is prime.  An arbitrary union of prime ideals is evidently prime, and so the set $\mathrm{spec}(M)$ of prime ideals is a lattice with respect to inclusion.

We view lattices as monoids via their meet operation. Every finite monoid $M$ admits a least congruence such that the quotient is a lattice: one takes the least congruence $\equiv$ such that $x\equiv x^2$ and $xy\equiv yx$ for all $x,y\in M$.  The corresponding quotient is called the \emph{maximal lattice image} of $M$.   Let $\pv 2=\{0,1\}$ be the two-element lattice (which can be viewed as the multiplicative monoid of the two-element field).  If $M$ is a finite monoid, let $\wh M$ be the set of monoid homomorphisms $\theta\colon M\to \pv 2$.  Notice that $\wh M$ is a finite lattice with respect to the pointwise ordering.  If $L$ is a lattice, then $L^{\varrho}$ will denote the lattice $L$ with the reverse ordering.

The following is piece of classical semigroup theory going back at least 70 years~\cite{Clifford}.  We sketch a proof using semilattice duality~\cite{CL,johnstone,latticeduality}.

\begin{Prop}\label{latticeimage}
Let $M$ be a finite monoid. Then:
\begin{enumerate}
\item\label{latticeimage.1} $\wh M\cong \mathrm{spec}(M)^{\varrho}$;
\item\label{latticeimage.2} if $L$ is a finite lattice, then $\wh L\cong L^{\varrho}$ and $\wh {\wh L}\cong L$ via the evaluation map;
\item\label{latticeimage.3} for each $m\in M$, there is a largest prime ideal $P(m)$ such that $m\notin P(m)$;
\item\label{latticeimage.4} the mapping $\sigma\colon M\to \mathrm{spec}(M)$ given by $\sigma(m)=P(m)$ is a homomorphism;
\item\label{latticeimage.5} given a homomorphism $\p\colon M\to L$ with $L$ a lattice, there is a unique homomorphism $\psi\colon \mathrm{spec}(M)\to L$ such that the diagram
    \[\xymatrix{M\ar[rr]^{\sigma}\ar[rd]_{\p} & &\mathrm{spec}(M)\ar@{-->}[ld]^{\psi}\\ & L &}\]
    commutes.
\end{enumerate}
\end{Prop}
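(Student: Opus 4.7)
The plan is to prove the five parts in order by exploiting the self-duality of finite meet-semilattices-with-identity under the contravariant functor $\wh{M} = \Hom(M,\pv 2)$.  For (\ref{latticeimage.1}), I send $\theta \in \wh M$ to $\theta\inv(0)$: the complement $\theta\inv(1)$ is a submonoid, so $\theta\inv(0)$ is a prime ideal, and conversely the characteristic function of $M \setminus P$ lies in $\wh M$.  The correspondence is order-reversing since $\theta_1 \leq \theta_2$ pointwise iff $\theta_1\inv(0) \supseteq \theta_2\inv(0)$, yielding $\wh M \cong \mathrm{spec}(M)^{\varrho}$.  For (\ref{latticeimage.2}), viewing $L$ as a monoid under meet, each $a \in L$ gives $\theta_a \in \wh L$ via $\theta_a(x) = 1 \iff x \geq a$, and every $\theta \in \wh L$ equals $\theta_a$ for $a = \bigwedge \theta\inv(1)$, which lies in $\theta\inv(1)$ by finite meet-preservation.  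This yields an order-reversing bijection $L^{\varrho} \cong \wh L$, and iterating gives $L \cong \wh{\wh L}$ via evaluation.

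For (\ref{latticeimage.3}), the union of any family of prime ideals is again prime: the complement is an intersection of submonoids, and properness follows since $1 \notin P$ for any prime $P$.  The empty set is itself a prime ideal, so the family of primes avoiding a given element $m$ is nonempty, and I define $P(m)$ to be its union.

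For (\ref{latticeimage.4}), I realize $\sigma$ as the canonical map $m \mapsto \mathrm{ev}_m \in \wh{\wh M}$; this is a monoid homomorphism into $\wh{\wh M}$ under its pointwise meet structure because $\mathrm{ev}_{mn}(\chi) = \chi(m)\chi(n) = (\mathrm{ev}_m \wedge \mathrm{ev}_n)(\chi)$.  Composing the order-reversing isomorphisms from (\ref{latticeimage.1}) and (\ref{latticeimage.2}), one obtains $\wh{\wh M} \cong \mathrm{spec}(M)$ as lattices (the two reversals cancel), and under this composite $\mathrm{ev}_m$ corresponds to $P(m)$: by (\ref{latticeimage.2}), $\mathrm{ev}_m$ matches $\bigwedge \{\chi \in \wh M : \chi(m) = 1\}$ in $\wh M$, and pointwise meets in $\wh M$ translate under (\ref{latticeimage.1}) to unions in $\mathrm{spec}(M)$, producing $\bigcup \{P : m \notin P\} = P(m)$.

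For (\ref{latticeimage.5}), given $\p\colon M \to L$, applying the duality functor twice yields $\wh{\wh{\p}}\colon \wh{\wh M} \to \wh{\wh L}$, and I set $\psi$ to be the composite $\mathrm{spec}(M) \cong \wh{\wh M} \to \wh{\wh L} \cong L$; naturality of the canonical map yields $\psi \circ \sigma = \p$.  For uniqueness, every $P \in \mathrm{spec}(M)$ satisfies $P = \bigcap_{m \notin P} P(m)$ (if $n \notin P$, taking $m=n$ excludes $n$ from the intersection, while the reverse inclusion follows from maximality of each $P(m)$), and since this intersection is itself prime it coincides with the lattice meet $\bigwedge_{m \notin P} \sigma(m)$ in $\mathrm{spec}(M)$; thus $\sigma(M)$ meet-generates its codomain and determines $\psi$.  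The main obstacle will be the careful bookkeeping of the order-reversals inherent to the $\wh{(-)}$ construction: a single application reverses order, so one must verify that the composite of two applications preserves meets, ensuring that $\sigma$ is a genuine lattice homomorphism into $\mathrm{spec}(M)$ rather than into its order-dual.
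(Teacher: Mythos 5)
Your proof is correct, and parts (\ref{latticeimage.1})--(\ref{latticeimage.3}) follow the paper's argument essentially verbatim: primes correspond to $\pv 2$-valued characters, the complement of a prime in a finite lattice is a principal filter, and $P(m)$ is the union of all primes avoiding $m$ (using that a union of primes is prime). Where you genuinely diverge is in (\ref{latticeimage.4}) and (\ref{latticeimage.5}). The paper first invokes the existence of the maximal lattice image $L$ of $M$ --- the quotient by the least congruence forcing $x\equiv x^2$ and $xy\equiv yx$, asserted just before the proposition --- observes $\wh M=\wh L$ by that universal property, and then transports the universal property across the chain $L\cong\wh{\wh L}\cong\wh{\wh M}\cong \mathrm{spec}(M)$, so that (\ref{latticeimage.4}) and (\ref{latticeimage.5}) come for free. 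You instead build everything directly: you check that $m\mapsto \mathrm{ev}_m$ is a homomorphism into the meet-monoid $\wh{\wh M}$, identify it with $\sigma$ under the composite isomorphism (the key bookkeeping being that pointwise meets of characters correspond to unions of primes, which is exactly the computation the paper also performs), and then prove the universal property from scratch --- existence via naturality of the double-dual functor, uniqueness via the observation that each prime $P$ equals $\bigwedge_{m\notin P}\sigma(m)$ in $\mathrm{spec}(M)$, so the image of $\sigma$ meet-generates. Your route is more self-contained, since it never presupposes that a maximal lattice image exists (indeed it reproves that fact); the paper's is shorter because it outsources that existence to a standard congruence-theoretic statement. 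Both arguments are sound.
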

\begin{proof}
To prove \eqref{latticeimage.1}, first observe that if $P$ is a prime ideal, then the characteristic function $\chi_{M\setminus P}$ is in $\wh M$.  Conversely, if $\theta\colon M\to \pv 2$ is a homomorphism, then $P=\theta\inv (0)$ is a prime ideal and $\theta=\chi_{M\setminus P}$.  Trivially, if $P$ and $P'$ are prime ideals, then $P\subseteq P'$ if and only if $\chi_{M\setminus P'}\leq \chi_{M\setminus P}$ and so $\wh M\cong \mathrm{spec}(M)^{\varrho}$.

Property \eqref{latticeimage.2} is almost immediate from \eqref{latticeimage.1}.  If $P$ is a prime ideal of the lattice $L$, then by finiteness, $L\setminus P$ has a minimum element $e$ and $L\setminus P$ is the principal filter $e^{\uparrow}=\{f\in L\mid f\geq e\}$. Conversely, if $e\in L$, then $L\setminus e^{\uparrow}$ is a prime ideal.  It follows that $\wh L\cong L^{\varrho}$ because $e^{\uparrow}\subseteq f^{\uparrow}$ if and only if $f\leq e$.  One easily deduces that $\wh {\wh L}\cong L$ via the evaluation map.  For future reference, if we want to view $\wh {\wh L}$ as $\wh L^{\varrho}\cong \mathrm{spec}(L)$, then $e\in L$ corresponds to the largest prime ideal $P$ of $L$ such that $e\notin P$, as $e^{\uparrow}$ is the smallest filter containing $e$.

To prove \eqref{latticeimage.3}, observe that the empty set is a prime ideal not containing $m$.  Since $\mathrm{spec}(M)$ is closed under union, it follows that there is a largest prime ideal $P$ with $m\notin P$ (namely the union of all such prime ideals).

We now prove \eqref{latticeimage.4} and \eqref{latticeimage.5} together.  Let $L$ be the maximal lattice image of $M$.  Then by the universal property of $L$, we must have $\wh M=\wh L$.  Thus \[L\cong \wh {\wh L}\cong \wh {\wh M}\cong \wh{\mathrm{spec}(M)^{\varrho}}\cong \mathrm{spec}(M).\]  By the remark two paragraphs up, the image of $m$ under the evaluation map $M\to \wh{\wh M}$ corresponds under the isomorphism $\wh {\wh M}\cong \mathrm{spec}(M)$ to the largest prime ideal not containing $m$, that is, to $P(m)$.
\end{proof}

It will be convenient to identity $\mathrm{spec}(M)$, for a finite monoid $M$, with a certain set of principal ideals. If $X$ is a principal ideal, we put \[M_X=\{m\in M\mid X\subseteq MmM\}\] and $\nup X=M\setminus M_X$.   Note that $\nup X$ is an ideal.  Let us say that $X$ is \emph{primary} if $\nup X$ is a prime ideal, or equivalently, if $M_X$ is a submonoid.  Let $\Lambda(M)$ be the poset of primary ideals, ordered by inclusion.

\begin{Prop}\label{primeideallattice}
The mapping $\tau\colon \Lambda(M)\to \mathrm{spec}(M)$ given by $X\mapsto \nup X$ is an order isomorphism.  Thus $\Lambda(M)$ is a lattice.
\end{Prop}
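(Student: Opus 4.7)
Here is my plan for proving Proposition~\ref{primeideallattice}.

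\medskip

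\noindent\textbf{Plan.} The map $\tau$ is well defined by the definition of primary, so the work is to show that $\tau$ is an order isomorphism. I would proceed in three steps: order-preservation, injectivity, and surjectivity with order-preserving inverse. The main obstacle is surjectivity, where one must manufacture, from a prime ideal $P$, a minimum principal ideal entirely inside the submonoid $M\setminus P$.

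\medskip

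\noindent\textbf{Order-preservation and injectivity.} Suppose $X,Y\in \Lambda(M)$ with $X\subseteq Y$. If $m\in M_Y$ then $Y\subseteq MmM$, hence $X\subseteq Y\subseteq MmM$, so $m\in M_X$; thus $M_Y\subseteq M_X$ and taking complements gives $\nup X\subseteq \nup Y$. So $\tau$ is order-preserving. For injectivity, write $X=MaM$ and $Y=MbM$ and assume $\nup X=\nup Y$, i.e.\ $M_X=M_Y$. Since $b\in M_Y=M_X$ we have $X\subseteq MbM=Y$, and by symmetry $Y\subseteq X$, so $X=Y$.

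\medskip

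\noindent\textbf{Surjectivity, the key step.} Given a prime ideal $P\in \mathrm{spec}(M)$, the complement $S=M\setminus P$ is a submonoid. I want to find $a\in S$ such that $MaM\subseteq MsM$ for every $s\in S$; then $X:=MaM$ will be a principal ideal with $M_X= S$, hence $\nup X=P$ and $X$ is primary. Since $M$ is finite, $S$ contains only finitely many principal ideals $MsM$, so there exist elements $a_1,\ldots,a_n\in S$ whose principal ideals $Ma_iM$ are exactly the inclusion-minimal members of $\{MsM : s\in S\}$. Put $a=a_1a_2\cdots a_n$; since $S$ is a submonoid, $a\in S$, and $MaM\subseteq Ma_iM$ for each $i$. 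But $MaM$ itself lies in $\{MsM : s\in S\}$, so by minimality of each $Ma_iM$ we must have $MaM=Ma_iM$ for all $i$. Since every $MsM$ with $s\in S$ contains one of the minimal $Ma_iM$, this common value $MaM$ is contained in every $MsM$ with $s\in S$, as required. (This is essentially the observation that a finite subsemigroup has a $\J$-minimal ideal, but I would give it self-containedly as above to stay in keeping with the paper's style.) It is immediate from the construction that $\tau(X)=P$.

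\medskip

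\noindent\textbf{Inverse is order-preserving.} Finally, if $\nup X\subseteq \nup Y$ then $M_Y\subseteq M_X$; writing $Y=MbM$, we have $b\in M_Y\subseteq M_X$, so $X\subseteq MbM=Y$. Combined with the previous steps this shows $\tau$ is an order isomorphism, and in particular $\Lambda(M)$ inherits a lattice structure from $\mathrm{spec}(M)$.
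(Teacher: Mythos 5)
Your proof is correct and follows the same strategy as the paper's: order-preservation and injectivity are immediate from the definitions, and surjectivity is obtained by taking the minimal ideal of the submonoid $M\setminus P$ and observing that the principal ideal of $M$ it generates is primary with $\nup X=P$. The only difference is cosmetic: you construct a generator $a$ of that minimal ideal explicitly as a product of representatives of the $\J$-minimal principal ideals, whereas the paper simply invokes the classical fact that a finite monoid has a principal minimal ideal.
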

\begin{proof}
Trivially, if $X,Y$ are primary, one has $X\subseteq Y$ if and only if $\nup X\subseteq \nup Y$.  Thus it suffices to show that every prime ideal $P$ is of the form $\nup X$.  Let $J$ be the minimal ideal of $M\setminus P$ and put $X=MJM$.  As $J$ is principal in $M\setminus P$, clearly $X$ is principal.  Also $m\in M\setminus P$ if and only if $J\subseteq MmM$, if and only if $m\in M_X$.  This completes the proof.
\end{proof}

Thus, we can take the maximal lattice image of a finite monoid $M$ to be given by $\sigma\colon M\to \Lambda(M)$ where $\sigma(m)$ is the largest primary ideal $P_m$ such that $P_m\subseteq MmM$.
The following proposition is well known, see the proof of~\cite[Lemma 4.6.38]{qtheor}.

\begin{Prop}\label{Jclassisasemigroup}
Let $X$ be a principal ideal.  Then $X$ is primary if and only if it is generated by a conjugacy class $D$ of idempotents such that $e,f\in D$ implies $MefM=MeM$.
\end{Prop}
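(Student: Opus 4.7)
\textit{Forward direction.} Assume $M_X$ is a submonoid. For any $a\in M$ with $MaM=X$, we have $a\in M_X$ and so $a^\omega\in M_X$, giving $X\subseteq Ma^\omega M$; conversely $a^\omega\in aM\subseteq X$, so $Ma^\omega M\subseteq X$, and therefore $Ma^\omega M=X$. Let $D=\{e\in E(M):MeM=X\}$; by Proposition~\ref{Dequiv} the idempotents generating a common principal ideal form a single conjugacy class, so $D$ is the (non-empty) conjugacy class of idempotent generators of $X$, with $MDM=X$. For $e,f\in D$, closure gives $ef\in M_X$, whence $X\subseteq MefM$; and since $e\in X$ and $X$ is an ideal, $ef\in X$, giving $MefM\subseteq X$. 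Hence $MefM=X=MeM$.

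\textit{Backward direction.} Assume $D$ is a conjugacy class of idempotents generating $X$ with $MefM=MeM$ for all $e,f\in D$; we show $M_X$ is closed under products. Let $m,n\in M_X$ and fix $e\in D$. Writing $e=u_1 m v_1=u_2 n v_2$ and squaring gives
\[
e=(u_1 m v_1)(u_2 n v_2)=u_1 m(v_1 u_2) n v_2\in MmMnM,
\]
and the difficulty is to show $e\in M(mn)M$. The hypothesis $MefM=MeM$ translates to $ef\J e$; by stability in finite semigroups this forces $ef\R e$, and symmetrically $fe\eL f$. Combining these with the interaction between the natural partial order on $E(M)$ and $\J$-equivalence one upgrades the hypothesis to the rectangular-band identity $efe=e$ for all $e,f\in D$; equivalently, the $\J$-class $J$ containing $D$ is a completely simple subsemigroup of $M$ with $E(J)=D$.

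With $efe=e$ in hand, for any $f\in D$ we obtain $e=efe=u_1 m(v_1 f u_2) n v_2$. The principal obstacle is now to absorb the bridging element $v_1 f u_2$ into $m$ and $n$ so that $e$ factors through $mn$. I would do this by passing to the idempotent $h=(m v_1 f u_2 n)^\omega$, choosing $f\in D$ so that the Rees matrix representation of the completely simple $\J$-class $J$ forces $h\in D$, and then verifying $u_1 h v_2=e$ by the same idempotent-power argument applied to the original factorization. Since then $h\in M(mn)M$ with $MhM=X$, we conclude $X\subseteq M(mn)M$ and hence $mn\in M_X$. The hard step is the choice of $f\in D$ ensuring $h$ generates $X$, which rests on the completely simple (rectangular-band) structure of $J$ and is the semigroup-theoretic heart of the result.
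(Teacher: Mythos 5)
Your forward direction is correct: you produce an idempotent generator as $a^{\omega}$ and invoke Proposition~\ref{Dequiv} to see that the idempotent generators of $X$ form a single conjugacy class, whereas the paper takes $D=E(J)$ for $J$ the minimal ideal of $M_X$; both are fine. The backward direction, however, rests on a false intermediate claim. The hypothesis ``$MefM=MeM$ for all $e,f\in D$'' does \emph{not} upgrade to the identity $efe=e$. That identity is exactly the extra condition separating rectangular monoids ($\pv{DO}$, see Proposition~\ref{DOcharacterization}(2)) from the strictly larger class $\pv{DS}$, and the present proposition is a $\pv{DS}$-type statement: primary ideals correspond to regular $\D$-classes that are merely \emph{subsemigroups}, not orthodox ones. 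Concretely, adjoin an identity to the Rees matrix semigroup over $\mathbb{Z}/2\mathbb{Z}=\{1,g\}$ with $2\times 2$ sandwich matrix $\left(\begin{smallmatrix}1&1\\1&g\end{smallmatrix}\right)$: the minimal ideal $X$ is a single conjugacy class $D$ of idempotents with $MefM=MeM$ for all $e,f\in D$ (so $X$ is primary, $M_X=M$), yet for $e=(1,1,1)$ and $f=(2,g,2)$ one computes $efe=(1,g,1)\neq e$. So the ``rectangular-band structure of $J$'' on which your construction of $h=(mv_1fu_2n)^{\omega}$ is supposed to rest is not available, and the step you yourself flag as the hard one --- choosing $f\in D$ so that $h\in D$ --- is both unproved and premised on a false reduction. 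Stability only gives you $ef\R e$ and $ef\eL f$, which locates $ef$ in the $\D$-class but does not control $efe$.

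The gap can be closed without any structure theory. Given $m,n\in M_X$ and $e\in D$, write $e=umv=xny$ and set $f=veum$ and $f'=nyex$. Using $umv=e=e^2$ one checks $f^2=ve(umv)eum=veum=f$ and $umfv=e$, so $MfM=MeM$ and $f\in D$; symmetrically $f'\in D$. Then $ff'=veu(mn)yex\in M(mn)M$, and the hypothesis applied to the pair $f,f'$ gives $X=MeM=Mff'M\subseteq M(mn)M$, i.e.\ $mn\in M_X$. This handles arbitrary $m,n\in M_X$ (not only elements of the $\D$-class generating $X$) in one stroke and makes no use of $efe=e$, the Rees coordinates, or $\omega$-powers.
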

\begin{proof}
Suppose first that $X$ is primary.  Let $J$ be the minimal ideal of $M_X$.  The proof of Proposition~\ref{primeideallattice} shows that $X=MJM$.  Now $D=E(J)$ is a conjugacy class of idempotents of $M_X$ because $J$ is its minimal ideal.  However, since $\nup X$ is an ideal, it in fact follows that $D$ is a conjugacy class of idempotents of $M$.  Since $J$ is a subsemigroup of $M_X$, it follows that if $e,f\in D$, then $MefM=MJM=MeM$.

For the converse, suppose $m,n\in M_X$ and let $e\in D$. Then $e = umv = xny$ with $u, v, x, y\in M$.  Let $f=veum$. Then $f^2=ve(umv)eum=veum=f$.  Clearly $MfM\subseteq MeM$.  But also $umfv=um(veum)v = e$.  Thus $MfM=MeM$.  Similarly, $f'=nyex$ is idempotent and $Mf'M=MeM$.  Thus $f,f'\in D$ and so by hypothesis, $MeM=Mff'M=MveumnyexM$.  Therefore, $mn\in M_X$ and so $X$ is primary.
\end{proof}

In semigroup parlance, this says that the lattice of primary ideals is the lattice of regular $\D$-classes that are subsemigroups, equipped with the $\J$-order.

Following Brown's terminology for bands~\cite{Brown1,Brown2}, we shall call $\Lambda(M)$ the \emph{support lattice} of $M$ and the mapping $\sigma\colon M\to \Lambda(M)$ will be called the \emph{support map}.
It should be noted that the papers~\cite{Brown1,Brown2,Saliola,BergeronSaliola} all use the reverse ordering on $\Lambda(M)$ (and so are essentially working with $\wh M$).

The class of finite monoids in which each idempotent-generated ideal is primary, or equivalently, by Proposition~\ref{Jclassisasemigroup}, each regular $\D$-class is a subsemigroup, plays a distinguished role in semigroup theory.  It goes by the acronym $\pv{DS}$ --- ``regular $\D$-classes are subsemigroups'' --- in the literature; semigroups in this class are also known as (semi)lattices of archimedean semigroups.  This class was first considered independently by Sch\"utz\-en\-ber\-ger~\cite{Schutznonambig} and Putcha~\cite{PutchaDS}.  For example, a linear algebraic monoid with zero whose underlying variety is irreducible belongs to $\pv{DS}$ if and only if its group of units is solvable~\cite{Putcha,Renner}.

An important subclass of $\pv{DS}$ is the class of rectangular monoids.  We say that $M$ is a \emph{rectangular monoid} if each conjugacy class of idempotents is a subsemigroup.  In the literature, this class is known as $\pv{DO}$. Here $\pv{DO}$ stands for ``regular $\D$-classes are orthodox semigroups". An orthodox semigroup is a von Neumann regular semigroup whose idempotents form a subsemigroup. The reason for our terminology is that the conjugacy classes are a type of semigroup known in the literature as a rectangular band, i.e., a direct product of a left zero semigroup and a right zero semigroup.

The class of rectangular monoids is closed under taking finite direct products, submonoids, homomorphic images and taking the opposite monoid~\cite{Almeida:book}.  Every group and every band is a rectangular monoid, as are $\J$-trivial monoids and $\R$-trivial monoids.  The results of~\cite{AMSV} show that if $M$ is a monoid with a basic algebra over an algebraically closed field of characteristic zero, then $M$ is rectangular. More precisely, a monoid $M$ has a basic algebra over an algebraically closed field of characteristic zero if and only if $M$ is rectangular and all its maximal subgroups are abelian.

Since every conjugacy class of idempotents in a rectangular monoid $M$ is a subsemigroup, Proposition~\ref{Jclassisasemigroup} indeed implies that the primary ideals of $M$ are precisely the idempotent-generated ideals.   In particular, the support map $\sigma\colon M\to \Lambda(M)$ satisfies $\sigma(m)=\sigma(m^{\omega})=Mm^{\omega}M$.

A finite monoid $M$ is called a \emph{Clifford monoid} (because of~\cite{Clifford}) if it satisfies $x^{\omega}x=x$ and $x^{\omega}y^{\omega}=y^{\omega}x^{\omega}$ for all $x,y\in M$.  These are precisely the inverse monoids with central idempotents~\cite{Lawson}.
They are also called \emph{semilattices of groups} because they can all be constructed in the following manner.  One takes a finite lattice $\Lambda$ and a presheaf of groups $\{G_e\mid e\in \Lambda\}$ on $\Lambda$~\cite{MM-Sheaves}.  If $e\leq f$, then there is a restriction homomorphism $\rho^f_e\colon G_f\to G_e$.  The restriction maps of course satisfy the obvious compatibility conditions.  One then makes $\coprod_{e\in \Lambda}G_e$ into a monoid by defining the product, for $g\in G_e$ and $h\in G_f$, by $\rho^e_{e\wedge f}(g)\rho^f_{e\wedge f}(h)$.  See~\cite{Clifford,CP} for details.  Clifford monoids are rectangular monoids.  More generally, any monoid whose idempotents are central (e.g., a commutative monoid) is rectangular.

The following proposition provides various characterizations of rectangular monoids that are well known to semigroup theorists, cf.~\cite[Exercise 8.1.3]{Almeida:book}.

\begin{Prop}\label{DOcharacterization}
Let $M$ be a finite monoid.  Then the following are equivalent:
\begin{enumerate}
\item $M$ is a rectangular monoid;
\item $MeM=MfM\iff efe=e,\ fef=f$ for all $e,f\in E(M)$;
\item $e,f\in E(M)$ are conjugate if and only if $efe=e,\ fef=f$;
\item for all $x,y\in M$, one has $(xy)^{\omega}(yx)^{\omega}(xy)^{\omega}=(xy)^{\omega}$;
\item if $m,n\in M$, $e,f\in E(M)$ and $e\in MmM\cap MnM\cap MfM$, then $emfne=emne$;
\item if $m,n\in M$, $e\in E(M)$ and $MmM=MnM=MeM$, then $men=mn$.
\end{enumerate}
\end{Prop}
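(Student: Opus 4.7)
My plan is to prove the six conditions equivalent in three stages. First, (2)$\Leftrightarrow$(3) is immediate from Proposition~\ref{Dequiv}, which identifies conjugacy with $MeM=MfM$. For (1)$\Rightarrow$(3), given conjugate idempotents $e,f$ in the conjugacy class $D$, the set $D$ is a subsemigroup of idempotents and a single $\mathscr{J}$-class of $M$; since every $\mathscr{J}$-class of a finite semigroup that is itself a subsemigroup is completely simple, $D$ is a completely simple band, i.e., a rectangular band, and the identity $xyx=x$ for rectangular bands gives $efe=e$ and $fef=f$. For (3)$\Rightarrow$(1), $(ef)^{2}=e(fef)=ef$ so $ef$ is idempotent, and $M(ef)M\supseteq M(efe)M=MeM$ combined with $M(ef)M\subseteq MeM\cap MfM=MeM$ gives $ef\sim e$, placing $ef$ in the conjugacy class of $e$.

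For (1)$\Leftrightarrow$(4), the key observation is that $(xy)^{\omega}$ and $(yx)^{\omega}$ are always conjugate idempotents: for $k$ sufficiently large, $(xy)^{k}=x(yx)^{k-1}\cdot y$ and $(yx)^{k}=y\cdot x(yx)^{k-1}$, so Proposition~\ref{Dequiv}(3) applies with $a=x(yx)^{k-1}, b=y$. Assuming (1), hence (3), then yields $(xy)^{\omega}(yx)^{\omega}(xy)^{\omega}=(xy)^{\omega}$, i.e., (4). Conversely, for conjugate $e,f$, Proposition~\ref{Dequiv}(3) furnishes $a,b$ with $ab=e$ and $ba=f$; applying (4) to the pair $(a,b)$ yields $efe=e$, and to the pair $(b,a)$ yields $fef=f$, recovering (3).

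Each of (5) and (6) implies (3), hence (1), by specialization: setting $m=n=1$ in (5) (and swapping the roles of $e$ and $f$ for the second identity) gives (3), while $m=n=e$ in (6) (applied in both orders to a conjugate pair) gives (3). For the converses (1)$\Rightarrow$(5), (6), the key preliminary lemma is the special case ``$e\in MfM$ implies $efe=e$'', obtained from (5) by setting $m=n=1$. I would prove this in a rectangular monoid as follows: write $e=xfy$ and set $e'=fyexf$; direct computation shows $e'$ is idempotent, $e'\leq f$ in the natural order on idempotents, and $e'\sim e$. Then (3) yields $ee'e=e$ and $e'ee'=e'$, and applying (4) to the pair $(ef,e')$ while using $fe'=e'f=e'$ yields $efe\cdot e'=ee'$; right-multiplying by $e$ and invoking $ee'e=e$ gives $efe=e$. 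With this lemma in hand, (5) and (6) follow by analogous manipulations combining (3), (4), and auxiliary idempotents constructed using the trick from Proposition~\ref{Jclassisasemigroup}; the bookkeeping is elementary but intricate, and constitutes the principal technical obstacle.
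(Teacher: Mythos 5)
The paper offers no proof of this proposition at all---it is stated as ``well known to semigroup theorists'' with a citation to Exercise~8.1.3 of Almeida's book---so your attempt has to be judged on its own merits rather than against an argument in the text. Most of what you write is correct: (2)$\Leftrightarrow$(3) via Proposition~\ref{Dequiv}, the cycle (1)$\Rightarrow$(3)$\Rightarrow$(1), the conjugacy of $(xy)^{\omega}$ and $(yx)^{\omega}$ giving (1)$\Leftrightarrow$(4), and the specializations (5)$\Rightarrow$(3), (6)$\Rightarrow$(3). I also checked your key lemma in detail: with $e=xfy$ and $e'=fyexf$ one does get $e'$ idempotent, $e'\leq f$, $e'$ conjugate to $e$ (since $xe'y=e$), and then $efe'=ee'$ and $e'ef$ are idempotents, so (4) applied to $(ef,e')$ gives $(ee')(e'ef)(ee')=efee'=ee'$ and right multiplication by $e$ yields $efe=e$. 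That computation is sound. One small misstatement: in (1)$\Rightarrow$(3), $D$ is the set of idempotents of a $\J$-class, not a $\J$-class of $M$; the cleanest repair avoids completely simple semigroups entirely---$ef\in D$ is an idempotent generating the same two-sided ideal as $e$ and $f$, so stability (Proposition~\ref{stabilityprop}) gives $efM=eM$, whence $e=(ef)x$ and $(ef)e=(ef)(ef)x=e$.

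The genuine gap is that (1)$\Rightarrow$(5) and (1)$\Rightarrow$(6)---which you yourself label ``the principal technical obstacle''---are never actually carried out, and they are not formal consequences of the lemma $e\in MfM\Rightarrow efe=e$. Already the case $f=e$ of (5) asserts $emene=emne$, i.e.\ that $s\mapsto ese$ is multiplicative on $M_X=\{s\mid e\in MsM\}$; this is a separate statement from the lemma (indeed Proposition~\ref{definerestrict} in the paper \emph{uses} (5)/(6) to establish that multiplicativity, so it cannot be invoked). For (6) the missing argument is short and you should supply it: $MmM=MnM=MeM$ forces $m,n$ regular, so $g=m'm$ and $h=nn'$ (with $mm'm=m$, $nn'n=n$) are idempotents conjugate to $e$ satisfying $mg=m$ and $hn=n$; the conjugacy class is a band satisfying $xyx=x$, hence also $xyz=xz$ (since $xyz=xy(zxz)=(x(yz)x)z=xz$), so $men=m(geh)n=m(gh)n=mn$. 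For (5), however, $m$ and $n$ may be non-regular elements lying strictly above $MeM$ in the ideal order, and one must first reduce to the regular elements $em,ne$ of the $\J$-class of $e$ (using that $M_X$ is a submonoid, via Proposition~\ref{Jclassisasemigroup}) and then prove $gfh=gh$ for idempotents $g,h$ conjugate to $e$ and an idempotent $f$ with $e\in MfM$. Until that reduction and identity are written out, the hardest part of the equivalence is missing.
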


It follows from the results of~\cite{AMSV} that a finite monoid is rectangular if and only if it has a faithful representation over the complex numbers (or any algebraically closed field of characteristic zero) by block upper triangular matrices of the form
\[\begin{pmatrix}M_1 & \ast& \cdots &\ast\\ 0 & M_2 & \ast &\vdots\\ \vdots &0 &\ddots&\ast\\ 0&\cdots& 0&M_n  \end{pmatrix}\]
with $M_i\setminus \{0\}$ a group for $i=1,\ldots,n$.  This includes in particular, any finite monoid of upper triangular matrices over such a field.

The following proposition contains an important well-known fact about maximal subgroups of rectangular monoids.

\begin{Prop}\label{definerestrict}
Let $M$ be a rectangular monoid and let $e\in E(M)$. Let $X=MeM$.  Then there is a retraction $\rho_e\colon M_X\to G_e$ defined by $\rho_e(m)=eme$.  Moreover, if $f\in E(M)$ with $MfM\subseteq MeM$, then $\rho_f\rho_e=\rho_f|_{M_X}$.
\end{Prop}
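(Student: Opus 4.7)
My strategy is to first verify that $\rho_e$ is a monoid homomorphism $M_X \to eMe$, then that its image lies in the maximal subgroup $G_e$, and finally that it fixes $G_e$ pointwise; the compositional identity will then follow from a short computation together with the fact that $fef = f$ when $MfM \subseteq MeM$. To check homomorphism, note that if $m, n \in M_X$ then $e \in MmM \cap MnM$, so Proposition~\ref{DOcharacterization}(5) (applied with outer and middle idempotent both equal to $e$) yields $emene = emne$; since $\rho_e(m)\rho_e(n) = eme\cdot ene = emene$, one has $\rho_e(mn) = \rho_e(m)\rho_e(n)$. That $\rho_e$ restricts to the identity on $G_e$ is immediate, as any $g \in G_e$ satisfies $g = ege = \rho_e(g)$.

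The heart of the argument is showing $\rho_e(m) \in G_e$ for $m \in M_X$. Because $X = MeM$ is primary (by the discussion following Proposition~\ref{Jclassisasemigroup}), the set $M_X$ is the complement of a prime ideal and hence a submonoid; in particular $m^\omega \in M_X$, so $e \in Mm^\omega M$. Proposition~\ref{DOcharacterization}(5), applied with outer idempotent $e$, middle idempotent $m^\omega$, and trivial left and right factors, then gives $em^\omega e = e$. Combining with the homomorphism property,
\[(eme)^{|M|!} = \rho_e(m^{|M|!}) = \rho_e(m^\omega) = em^\omega e = e,\]
so $eme$ is a unit of the finite monoid $eMe$, that is, an element of $G_e$.

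For the compositional identity, let $f \in E(M)$ with $MfM \subseteq MeM$. Then $M_X \subseteq M_{MfM}$, and since $MemeM = MeM \supseteq MfM$ the element $eme \in G_e$ also lies in the domain of $\rho_f$. The key observation is that Proposition~\ref{DOcharacterization}(5), applied with outer idempotent $f$, middle idempotent $e$, and trivial left/right factors (the condition being $f \in MeM$, which is just the hypothesis), yields $fef = f$. Using that $\rho_f$ is a homomorphism on $M_{MfM}$,
\[\rho_f(\rho_e(m)) = \rho_f(eme) = \rho_f(e)\rho_f(m)\rho_f(e) = f\cdot \rho_f(m)\cdot f = \rho_f(m),\]
the last equality because $\rho_f(m) \in fMf$. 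The main obstacle is noticing the right application of Proposition~\ref{DOcharacterization}(5) in its degenerate form with $m' = n' = 1$; once observed, both the range statement for $\rho_e$ and the compositional identity reduce to routine verifications.
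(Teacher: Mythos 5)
Your proof is correct and follows essentially the same route as the paper's: verify the homomorphism property via Proposition~\ref{DOcharacterization}, show $em^{\omega}e=e$ so that $\rho_e(m)$ is a unit of $eMe$, observe $\rho_e$ fixes $G_e$, and then compute $\rho_f\rho_e$. The only (cosmetic) difference is that you route the final identity through the homomorphism property of $\rho_f$ together with $fef=f$, whereas the paper evaluates $femef=fmf$ directly from Proposition~\ref{DOcharacterization}.
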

\begin{proof}
First we verify that $\rho_e\colon M_X\to eMe$ is a homomorphism. Indeed, Proposition~\ref{DOcharacterization} immediately yields that if $m,n\in M_X$, then $\rho_e(m)\rho_e(n)=emene=emne=\rho_e(mn)$.  Also $\rho_e(1) =e$.  So $\rho_e\colon M_X\to eMe$ is a homomorphism.

For $m\in M_X$, we compute $\rho_e(m)^{\omega} = \rho_e(m^{\omega}) = em^{\omega}e = e$ by Proposition~\ref{DOcharacterization}.  Thus $\rho_e(m)$ is invertible in $eMe$, i.e., belongs to $G_e$.  Moreover, if $g\in G_e$, then $g\in M_X$ and $\rho_e(g)=ege=g$.  So $\rho_e$ is a retraction.

Finally, if $Y=MfM\subseteq X$, then $M_X\subseteq M_Y$.  If $m\in M_X$, then $\rho_f\rho_e(m) = femef=fmf=\rho_f(m)$ where the penultimate equality uses Proposition~\ref{DOcharacterization}. This shows that $\rho_f\rho_e=\rho_e|_{M_X}$.
\end{proof}

Since the maximal subgroup at an idempotent $e$ depends only on the conjugacy class of $e$, we can thus associate to each element $X\in \Lambda(M)$ a group $G_X$ that is uniquely determined up to isomorphism.  More specifically, we fix, for each ideal $X\in \Lambda(M)$, an idempotent $e_X$ with $Me_XM=X$ and define $G_X=G_{e_X}$.  Up to isomorphism, it depends only on $X$.  We call $G_X$ the \emph{maximal subgroup of $X$}.  It is also convenient to set $\rho_X=\rho_{e_X}$ for $X\in \Lambda(M)$.  Sometimes, it is valuable to think of $\rho_X$ as a homomorphism $\rho_X\colon M\to G_X\cup \{0\}\subseteq kG_X$ by sending $\nup X$ to $0$.

To each rectangular monoid $M$ (together with a fixed set $\{e_X\}_{X\in \Lambda(M)}$ of idempotent generators of the primary ideals of $M$), we can associate the monoid \[\Lambda_*(M)=\{(X,g)\mid X\in \Lambda(M),g\in G_X\}\] where the product is given by \[(X,g)(Y,h) = (X\wedge Y,\rho_{X\wedge Y}(gh)).\] The identity is $(M,1)$. Associativity relies on the fact that if $Y\subseteq X$, then $\rho_Y\rho_X=\rho_Y|_{M_X}$. We remark that $\Lambda_*(M)$ is a Clifford monoid.
One can prove that $\Lambda_*(M)$ depends only on $M$, and not the choice of idempotents, up to isomorphism, although we shall not use this.  In fact, $\Lambda_*(M)$ is the maximal Clifford monoid image of $M$.

The support map induces a surjective homomorphism $\sigma_*\colon M\to \Lambda_*(M)$ by putting \[\sigma_*(m) = (\sigma(m),\rho_{\sigma(m)}(m)).\]  This map is surjective because if $g\in G_X$, then $(X,g) = \sigma_*(g)$.  The verification that $\sigma_*$ is a homomorphism again boils down to the property $\rho_Y\rho_X=\rho_Y|_{M_X}$.

A homomorphism $\p\colon M\to N$ of monoids is called an \emph{$\pv{LI}$-morphism} if, for each idempotent $e\in N$, the semigroup $S_e=\p\inv(e)$ satisfies $fS_ef=f$ for all $f\in E(S_e)$.  The importance of $\pv{LI}$-morphisms for us stems from~\cite[Theorem~3.5]{AMSV}, which states that $\pv{LI}$-morphisms are the semigroup analogues of algebra homomorphisms with nilpotent kernels.

\begin{Thm}\label{radicalthm}
Let $\p\colon M\to N$ be an $\pv{LI}$-morphism of monoids and let $k$ be a field.  Then the kernel of the induced morphism $\ov{\p}\colon kM\to kN$ is nilpotent.  If the characteristic of $k$ is zero, the converse holds.
\end{Thm}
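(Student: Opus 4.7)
The plan is to prove the two implications separately.

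For the converse, suppose $\p$ is not $\pv{LI}$: there exist $e \in E(N)$, $f \in E(S_e)$, and $s \in S_e$ with $g := fsf \neq f$. From $f^2 = f$ we get $\p(g) = e$ and $fg = gf = g$, hence $fg^n = g^n = g^n f$ for all $n \geq 1$; setting $h = g^{\omega}$ yields $fh = hf = h$. If $h \neq f$, then $f - h \in \ker \ov\p$ and $(f - h)^2 = f^2 - fh - hf + h^2 = f - h$, so $f - h$ is a nonzero idempotent in $kM$, hence not nilpotent. If $h = f$, then $g^{\omega} = f = 1_{G_f}$ forces $g \in G_f \setminus \{1_{G_f}\}$; in characteristic zero the element $1_{G_f} - g$ is not nilpotent in $kG_f$ (the finite-order element $g$ has some eigenvalue different from $1$ in a suitable irreducible representation of $G_f$), and since multiplication in $kG_f$ is inherited from $kM$, $f - g$ is not nilpotent in $kM$ either. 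In either case $\ker \ov\p$ contains a non-nilpotent element, contradicting the supposed nilpotence.

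For the forward direction, assume $\p$ is $\pv{LI}$ and pick a chain of two-sided ideals $\emptyset = J_0 \subsetneq J_1 \subsetneq \cdots \subsetneq J_r = N$ with each $J_i \setminus J_{i-1}$ a single $\mathscr J$-class $D_i$ of $N$. Pulling back gives a chain $K_i = \p^{-1}(J_i)$ of ideals of $M$ and a chain $0 = kK_0 \subseteq \cdots \subseteq kK_r = kM$ of two-sided ideals of $kM$ compatible with $\ov\p$. By iterating the standard fact that a nilpotent extension of a nilpotent ideal is nilpotent, nilpotence of $\ker \ov\p$ reduces to nilpotence of the kernel of each induced map $\ov\p_i \colon kK_i/kK_{i-1} \to kJ_i/kJ_{i-1}$ on the successive subquotients; here the target is the contracted algebra of the principal factor of $D_i$ in $N$.

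If $D_i$ is non-regular (contains no idempotent), then the principal factor $J_i/J_{i-1}$ is a null semigroup, and a short argument shows that $K_i/K_{i-1}$ is null too, so $\ov\p_i$ is a linear map between algebras with trivial multiplication and $\ker \ov\p_i$ has nilpotence index two. If $D_i$ is regular, then $J_i/J_{i-1}$ is a completely $0$-simple Rees matrix semigroup $\mathcal{M}^0(G,I,\Lambda,P)$, and $\pv{LI}$ forces each fiber $S_e = \p^{-1}(e)$ over an idempotent $e \in D_i$ to be a finite locally trivial semigroup. The key lemma is that the augmentation ideal of $kS_e$ is nilpotent: the minimal ideal $K(S_e)$ is a rectangular band (its maximal subgroups collapse by local triviality), the augmentation ideal of a rectangular band algebra $kL \otimes kR$ satisfies $(\mathrm{aug})^3 = 0$ by direct calculation using $xy = \mathrm{aug}(y)\,x$ in $kL$ and dually in $kR$, and the nil quotient $S_e/K(S_e)$ forces high enough powers of the augmentation ideal into $kK(S_e)$. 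Transferring this fiberwise nilpotence to the full kernel of $\ov\p_i$ uses the Rees matrix structure: a fiber $\p^{-1}(j)$ for general $j \in D_i$ is obtained from an idempotent fiber by one-sided multiplication by invertible elements of $J_i/J_{i-1}$, and such translations respect augmentations. The main obstacle is precisely this last step --- uniformly controlling the global nilpotence index of $\ker \ov\p_i$ from the fiberwise bounds via the Rees sandwich matrix --- whereas the converse's construction of a non-nilpotent element is cleanly explicit.
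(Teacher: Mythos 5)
The paper itself offers no proof of this statement: it is quoted from \cite[Theorem~3.5]{AMSV}, so there is no internal argument to compare against. Your converse (characteristic-zero) direction is correct and complete: failure of the $\pv{LI}$ condition yields $g=fsf\neq f$ with $fg=gf=g$, and then either $f-g^{\omega}$ is a nonzero idempotent of $kM$, or $g$ is a nontrivial element of the maximal subgroup $G_f$, in which case $f-g$ is a nonzero diagonalizable (hence non-nilpotent) element of $kG_f\subseteq kM$; both lie in $\ker\ov{\p}$. This is the standard argument and nothing needs to be added there.

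The forward direction, however, has a genuine gap, located exactly where you flag it. The principal-series reduction, the null-factor case, and the key lemma that the augmentation ideal of $kS_e$ is nilpotent for each locally trivial fiber $S_e=\p\inv(e)$ over an idempotent are all correct (the index-$3$ computation on the rectangular band $K(S_e)$ followed by the nil Rees quotient does the job). But the passage from this fiberwise statement to nilpotence of $\ker\ov{\p}_i$ on a regular principal factor is not a formal translation argument: a completely $0$-simple semigroup has no invertible elements, and Green's-lemma translations only biject fibers as sets; they say nothing about products of kernel elements supported over \emph{different} fibers, which is where the sandwich matrix intervenes. To see that something substantive is missing, restrict to a single group $\HH$-class $H$ of $D_i$: then $T=\p\inv(H)$ is a subsemigroup surjecting onto $H$ with locally trivial fiber $T_1=\p\inv(1_H)$, and one must show that $\ker(kT\to kH)$ is nilpotent. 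The augmentation ideal of $kT$ is certainly \emph{not} nilpotent (it surjects onto the augmentation ideal of $kH$), so no bookkeeping of augmentations can suffice; one must instead produce a copy of $H$ inside $T$ --- for instance $eTe$, for $e$ an idempotent of the minimal ideal of $T_1$, is a group mapping isomorphically onto $H$ because $eT_1e=\{e\}$ --- and then prove that every $t\in T$ is congruent to $ete$ modulo a nilpotent ideal, and finally globalize this over the whole $\J$-class. That identification is the actual content of the theorem and is absent from your proposal. The proof in \cite{AMSV} sidesteps the problem of bounding nilpotency indices altogether: it establishes the containment $\ker\ov{\p}\subseteq\rad(kM)$ by checking, via the Clifford--Munn--Ponizovsky parametrization of the irreducible representations of $M$ by apexes and maximal subgroups, that every irreducible representation of $M$ factors through $\p$; since $\ker\ov{\p}$ is an ideal of a finite-dimensional algebra, this containment is equivalent to nilpotence.
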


See~\cite{AMSV} for an analogue of Theorem~\ref{radicalthm} for characteristic $p$.

\begin{Prop}\label{isLImap}
The map $\sigma_*\colon M\to \Lambda_*(M)$ is an $\pv{LI}$-morphism.
\end{Prop}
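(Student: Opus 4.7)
The plan is first to pin down the idempotents of $\Lambda_*(M)$: since $G_X$ is a group with identity $e_X$, the equation $(X,g)^2=(X,g)$ forces $g=e_X$, so the idempotents are precisely the elements $(X,e_X)$ for $X\in\Lambda(M)$. I would then describe the fibre $S=\sigma_*\inv(X,e_X)$ by unwinding the definition of $\sigma_*$; using the fact that in a rectangular monoid every primary ideal is idempotent-generated and $\sigma(m)=Mm^\omega M$, the fibre $S$ consists of all $m\in M$ with $Mm^\omega M=Me_XM$ (which in particular forces $m\in M_X$) together with $\rho_X(m)=e_Xme_X=e_X$.

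Next, I would take an arbitrary $f\in E(S)$ and $s\in S$ and show $fsf=f$ by pushing the computation through the retraction $\rho_f$ of Proposition~\ref{definerestrict}. Since $MfM=X=Me_XM$, the idempotents $f$ and $e_X$ are conjugate, so Proposition~\ref{DOcharacterization}(3) yields $fe_Xf=f$. Proposition~\ref{definerestrict}, applied with $e=e_X$ and the given $f$ (both generating the ideal $X$), gives the retraction composition law $\rho_f\rho_{e_X}=\rho_f|_{M_X}$. Combining these with the defining condition $\rho_{e_X}(s)=e_X$,
\[
fsf=\rho_f(s)=\rho_f\bigl(\rho_{e_X}(s)\bigr)=\rho_f(e_X)=fe_Xf=f,
\]
which is the required identity $fSf=\{f\}$. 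I do not anticipate any serious obstacle: the proof is a direct application of the composition law for retractions from Proposition~\ref{definerestrict}, and the rectangular hypothesis enters only through the conjugacy characterization $fe_Xf=f$ of Proposition~\ref{DOcharacterization}(3).
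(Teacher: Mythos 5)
Your proof is correct and follows essentially the same route as the paper: identify the idempotents of $\Lambda_*(M)$ as the pairs $(X,e_X)$, note that any $s$ and $f$ in the fibre satisfy $\sigma(s)=\sigma(f)=X$ and $e_Xse_X=e_X$, and then compute $fsf=fe_Xse_Xf=fe_Xf=f$. Your packaging of the middle step via the composition law $\rho_f\rho_{e_X}=\rho_f|_{M_X}$ from Proposition~\ref{definerestrict} is just a reformulation of the paper's direct appeal to Proposition~\ref{DOcharacterization}, so the two arguments coincide.
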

\begin{proof}
An idempotent of $\Lambda_*(M)$ is of the form $(X,e_X)$.  Suppose that $m,f\in \sigma_*\inv(X,e_X)$ with $f\in E(M)$.  Then $\sigma(m)=\sigma(e_X)=\sigma(f)$ and $e_Xme_X=e_X=e_Xfe_X$.  Thus $fmf=fe_Xme_Xf=fe_Xf=f$ where we have applied Proposition~\ref{DOcharacterization} several times.
\end{proof}

A monoid $M$ is (von Neumann) \emph{regular} if $m\in mMm$ for all $m\in M$.  A regular monoid which is rectangular is called an \emph{orthogroup} in the literature.  Orthogroups can be characterized as those finite monoids $M$ satisfying $x^{\omega}x=x$ for all $x\in M$ and whose idempotents form a submonoid.  In particular, bands and left/right regular bands of groups are orthogroups.  In~\cite{DiaconisSteinberg}, random walks on orthogroups are analyzed, including card-shuffling examples.

\section{The irreducible representations of a rectangular monoid}\label{complexalgebra}
Fix, for the remainder of this section, a rectangular monoid $M$ as well as a set $\{e_X\}_{X\in \Lambda(M)}$ of idempotent generators for the idempotent-generated principal ideals of $M$.  The results of this section are not new.  They are specializations of the results of~\cite{AMSV,mobius2}.    The special case of a right regular band of groups was considered previously by the authors~\cite{rrbg}. The reader interested in more details about the representation theory of finite semigroups in general is referred to~\cite{myirreps} for a modern approach or to~\cite[Chapter 5]{CP} and~\cite{RhodesZalc,McAlisterCharacter,Putcharep5,McAlistersurvey} for the classical approach; see also~\cite{Putcharep3,rrbg} for connections with the theory of quasi-hereditary algebras and~\cite{AMSV,mobius2,mortality,transformations,DiaconisSteinberg,Putcharep1,Malandro1,Malandro2} for applications.

Our analysis of $\CCC M$ begins by establishing that the surjective algebra homomorphism $\ov{\sigma}_*\colon \CCC M\to \CCC \Lambda_*(M)$ induced by $\sigma_*$ is the semisimple quotient in good characteristic.  The nilpotence of the kernel is an immediate consequence of Theorem~\ref{radicalthm} and Proposition~\ref{isLImap}.

\begin{Prop}\label{nilpotentkernel}
The algebra homomorphism $\ov{\sigma}_*\colon \CCC M\to \CCC \Lambda_*(M)$ has nilpotent kernel.
\end{Prop}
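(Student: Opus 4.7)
The proof is essentially one line given the machinery already in place. The plan is to invoke the two results that immediately precede the statement: Proposition~\ref{isLImap}, which tells us that $\sigma_*\colon M\to \Lambda_*(M)$ is an $\pv{LI}$-morphism, and Theorem~\ref{radicalthm}, which says that any $\pv{LI}$-morphism of monoids induces an algebra homomorphism with nilpotent kernel over an arbitrary field $k$. Applying the theorem to $\p=\sigma_*$ yields the conclusion directly; no further argument is required, and indeed the paragraph preceding the proposition already flags this as the intended route.

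The only thing worth checking carefully is that the characteristic-free half of Theorem~\ref{radicalthm} is what we want: the theorem provides nilpotence of $\ker \ov{\p}$ whenever $\p$ is an $\pv{LI}$-morphism, with no hypothesis on $k$ (the converse direction is the one that needs characteristic zero). So there is nothing to verify about the field; the proposition holds for $k$ an arbitrary field, as stated.

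There is no genuine obstacle here. The substantive content has already been absorbed into Proposition~\ref{isLImap} (whose proof reduced to a multiplication check inside a fiber of $\sigma_*$ using Proposition~\ref{DOcharacterization}) and into Theorem~\ref{radicalthm} (which is quoted from~\cite{AMSV}). Thus the proposal for the proof is simply to cite these two results in sequence and conclude. If one wanted to add a sentence of orientation for the reader, it would be to remark that nilpotence of $\ker\ov{\sigma}_*$ is exactly what is needed in order to use the semisimple algebra $\CCC\Lambda_*(M)$ as a stand-in for the semisimple quotient $\CCC M/\rad(\CCC M)$ in good characteristic, setting up the program of the rest of the section.
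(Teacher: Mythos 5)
Your proposal is correct and is exactly the paper's argument: the statement is recorded with no separate proof precisely because the preceding sentence notes that nilpotence of the kernel follows immediately from Proposition~\ref{isLImap} (that $\sigma_*$ is an $\pv{LI}$-morphism) combined with Theorem~\ref{radicalthm}. Your remark that only the forward, characteristic-free direction of Theorem~\ref{radicalthm} is needed is also accurate.
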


The following explicit decomposition of $\CCC \Lambda_*(M)$ is a special case of a general result on inverse semigroup algebras from~\cite{mobius1,mobius2}.   The reader should consult~\cite{Cohn,Stanley} for the relevant background on Rota's theory of M\"obius inversion for posets.

\begin{Prop}\label{isomap}
There is an isomorphism \[\alpha\colon \CCC \Lambda_*(M)\to \bigoplus_{X\in \Lambda(M)} \CCC G_{X}\] defined on $\Lambda_*(M)$ by
\[\alpha(X,g) = \sum_{Y\leq X} \rho_Y(g).\]   The inverse is given on $g\in G_X$ by \[\alpha\inv (g) = \sum_{Y\leq X} (Y,\rho_Y(g))\mu(Y,X)\] where  $\mu$ is the M\"obius function of the lattice $\Lambda(M)$.
\end{Prop}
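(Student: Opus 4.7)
The plan is to check directly that the formula for $\alpha$ extends linearly to an algebra homomorphism, and then verify that the displayed formula really defines a two-sided inverse via standard M\"obius inversion on $\Lambda(M)$.

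First I would observe that the target $\bigoplus_{X\in \Lambda(M)} \CCC G_X$ is an algebra with identity $\sum_X e_X$ and componentwise multiplication, so the $G_X$-components annihilate the $G_Y$-components for $X\neq Y$. I would extend $\alpha$ linearly to $\CCC\Lambda_*(M)$ and verify (a) $\alpha(M,1)=\sum_{Y\in\Lambda(M)}\rho_Y(1)=\sum_Y e_Y$ is the identity, and (b) $\alpha$ respects products. For (b), recall that in $\Lambda_*(M)$,
\[(X,g)(Y,h)=(X\wedge Y,\rho_{X\wedge Y}(gh)),\]
so expanding and using Proposition~\ref{definerestrict} (namely $\rho_Z\rho_{X\wedge Y}=\rho_Z|_{M_{X\wedge Y}}$ whenever $Z\leq X\wedge Y$, together with the fact that $g\in G_X\subseteq M_X\subseteq M_Z$ for any $Z\leq X$) gives
\[\alpha\bigl((X,g)(Y,h)\bigr)=\sum_{Z\leq X\wedge Y}\rho_Z(gh)=\sum_{Z\leq X\wedge Y}\rho_Z(g)\rho_Z(h),\]
since each $\rho_Z$ is a homomorphism on $M_Z$. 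On the other hand,
\[\alpha(X,g)\alpha(Y,h)=\Bigl(\sum_{Z\leq X}\rho_Z(g)\Bigr)\Bigl(\sum_{W\leq Y}\rho_W(h)\Bigr)=\sum_{Z\leq X\wedge Y}\rho_Z(g)\rho_Z(h),\]
because the cross terms vanish by the componentwise structure of the product. These agree, so $\alpha$ is an algebra homomorphism.

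Next I would define $\beta\colon \bigoplus_X \CCC G_X\to \CCC\Lambda_*(M)$ linearly by the displayed formula $\beta(g)=\sum_{Y\leq X}(Y,\rho_Y(g))\mu(Y,X)$ for $g\in G_X$, and check $\alpha\circ\beta=\mathrm{id}$ and $\beta\circ\alpha=\mathrm{id}$. For the first,
\[\alpha(\beta(g))=\sum_{Y\leq X}\mu(Y,X)\sum_{Z\leq Y}\rho_Z\rho_Y(g)=\sum_{Z\leq X}\rho_Z(g)\sum_{Z\leq Y\leq X}\mu(Y,X),\]
again using $\rho_Z\rho_Y=\rho_Z|_{M_Y}$ for $Z\leq Y$ and $g\in M_X\subseteq M_Y$. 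The inner sum equals $\delta_{Z,X}$ by the defining property of the M\"obius function, and since $\rho_X(g)=g$ for $g\in G_X$, we get $\alpha(\beta(g))=g$. For the second,
\[\beta(\alpha(X,g))=\sum_{Y\leq X}\beta(\rho_Y(g))=\sum_{Y\leq X}\sum_{Z\leq Y}(Z,\rho_Z(g))\mu(Z,Y),\]
where I used $\rho_Z\rho_Y(g)=\rho_Z(g)$. Swapping the order of summation yields $\sum_{Z\leq X}(Z,\rho_Z(g))\sum_{Z\leq Y\leq X}\mu(Z,Y)=(X,g)$, again by the defining property of $\mu$.

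The only real content beyond bookkeeping is the compatibility relation $\rho_Z\rho_Y=\rho_Z|_{M_Y}$ for $Z\leq Y$, which is exactly what Proposition~\ref{definerestrict} supplies; everything else is M\"obius inversion. I do not anticipate a serious obstacle, though one has to be careful that elements of $G_X$ lie in $M_Z$ for every $Z\leq X$ (which is immediate from $Z\subseteq X$ as ideals, hence $M_X\subseteq M_Z$) so that all the applications of the various $\rho_Z$'s are legitimate.
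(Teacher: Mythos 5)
Your proof is correct. The paper itself gives no argument for this proposition: it simply cites it as a special case of the general M\"obius-function decomposition of inverse semigroup algebras from the references \cite{mobius1,mobius2}, applied to the Clifford monoid $\Lambda_*(M)$. Your direct verification is precisely the specialization of that general argument: the compatibility $\rho_Z\rho_Y=\rho_Z|_{M_Y}$ for $Z\leq Y$ from Proposition~\ref{definerestrict} plus the two dual identities $\sum_{Z\leq Y\leq X}\mu(Y,X)=\delta_{Z,X}=\sum_{Z\leq Y\leq X}\mu(Z,Y)$ are exactly what is needed, and your handling of the cross terms in the componentwise product is right. The only step worth making fully explicit is that $gh\in M_{X\wedge Y}$ (so that $\rho_Z\rho_{X\wedge Y}(gh)=\rho_Z(gh)$ is legitimate); this follows, as you essentially note, because $M_{X\wedge Y}$ is a submonoid containing both $G_X\subseteq M_X\subseteq M_{X\wedge Y}$ and $G_Y\subseteq M_Y\subseteq M_{X\wedge Y}$. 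With $\beta$ shown to be a two-sided inverse of the algebra homomorphism $\alpha$ on bases, the proposition follows; there is no gap.
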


Notice that the composition \[\alpha\ov{\sigma}_{*}\colon kM\to \bigoplus_{X\in \Lambda(M)} \CCC G_{X}\] is nothing more than the map induced by the morphisms \[\rho_X\colon M\to G_X\cup \{0\}\subseteq kG_X\] discussed earlier.  Recall that \[\rho_X(m) = \begin{cases}e_Xme_X & m\in M_X\\ 0 & \text{else.}\end{cases}\] Indeed, \[\alpha\ov{\sigma}_*(m)= \sum_{Y\leq \sigma(m)}\rho_Y(\rho_{\sigma(m)}(m)) = \sum_{Y\leq \sigma(m)}\rho_Y(m) = \sum_{X\in \Lambda(M)}\rho_X(m)\] where the last equality uses that $\rho_X(m)=0$ if $\sigma(m)\ngeq X$.

If $N$ is a monoid, let $\Irr (N)$ denote the set of (equivalence classes of) irreducible  representations of $N$ over $k$.  An immediate consequence of Propositions~\ref{nilpotentkernel}
and~\ref{isomap} is the following corollary.

\begin{Cor}\label{reptheoryofHsiaosgp}
Let $M$ be a rectangular monoid and let $k$ be a field. Then
\begin{equation*}
\Irr(M)=\Irr(\Lambda_*(M)) = \coprod_{X\in \Lambda(M)} \Irr(G_X).
\end{equation*}
If $\p\in \Irr(G_X)$, then it can be viewed as an irreducible representation of $M$ via the projection $\rho_X\colon M\to G_X\cup \{0\}\subseteq kG_X$.

If the characteristic of $k$ does not divide the order of any maximal subgroup $G_X$, then $\ov{\sigma}_*$ is the semisimple quotient.
\end{Cor}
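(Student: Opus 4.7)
The plan is to chain together the two propositions immediately preceding the corollary and a standard fact about nilpotent kernels. First I would invoke Proposition~\ref{nilpotentkernel}: because $\ker \ov{\sigma}_*$ is a nilpotent two-sided ideal, it is contained in $\rad(kM)$, and therefore the pullback along $\ov{\sigma}_*$ induces a bijection on isomorphism classes of irreducible modules. This already gives $\Irr(M) = \Irr(\Lambda_*(M))$.

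Next I would apply Proposition~\ref{isomap}: the isomorphism $\alpha \colon k\Lambda_*(M) \to \bigoplus_{X \in \Lambda(M)} kG_X$ identifies simple $\Lambda_*(M)$-modules with simple modules for some $kG_X$, since simples over a finite product of algebras are precisely the simples of one factor pulled back via the corresponding projection. This gives the disjoint union decomposition $\Irr(\Lambda_*(M)) = \coprod_{X \in \Lambda(M)} \Irr(G_X)$. To see that the simple associated to $\p \in \Irr(G_X)$ is the one obtained by inflating via $\rho_X$, I would use the calculation displayed just before the corollary, namely that $\alpha \circ \ov{\sigma}_*$ equals the direct sum of the maps $\rho_X$; projecting onto the $X$-th coordinate and then applying $\p$ gives precisely the composition $\p \circ \rho_X$.

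For the final assertion, the hypothesis that the characteristic of $k$ divides no $|G_X|$ lets me apply Maschke's theorem factorwise: each $kG_X$ is semisimple, so $\bigoplus_X kG_X$ is semisimple and hence so is $k\Lambda_*(M)$ via $\alpha$. Therefore $\ker \ov{\sigma}_*$ contains $\rad(kM)$. Combined with the reverse inclusion from nilpotence, $\ker \ov{\sigma}_* = \rad(kM)$, so $\ov{\sigma}_*$ is the semisimple quotient.

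There is no real obstacle here; the entire content has been packaged into Propositions~\ref{nilpotentkernel} and~\ref{isomap} and the trivial observation that simples of $A/I$ with $I$ nilpotent are the same as simples of $A$. The only mildly delicate point is making sure one identifies the simple coming from a given $X$ correctly as an inflation along $\rho_X$ rather than along some other projection, which is why it is worth explicitly invoking the displayed formula $\alpha \ov{\sigma}_*(m) = \sum_X \rho_X(m)$ rather than just citing Proposition~\ref{isomap} abstractly.
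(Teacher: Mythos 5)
Your proposal is correct and follows exactly the route the paper intends: the paper presents this corollary as an immediate consequence of Propositions~\ref{nilpotentkernel} and~\ref{isomap} together with the displayed identity $\alpha\ov{\sigma}_*(m)=\sum_X\rho_X(m)$, which is precisely the chain of observations you assemble. Your explicit appeal to that identity to pin down the inflation along $\rho_X$, and to Maschke's theorem factorwise for the final assertion, fills in the "immediate" steps faithfully.
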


The above results also imply that $k$ is a splitting field for $M$ if and only if it is a splitting field for each maximal subgroup of $M$.  (In fact, this is true for finite monoids in general.)

It is proved in~\cite{AMSV} that if $M$ is a finite monoid and the characteristic of $k$ does not divide the order of any of its maximal subgroups, then $kM$ is split basic if and only if $M$ is a rectangular monoid and the maximal subgroups of $M$ are abelian and split over $k$.

If $\p\in \Irr(G_X)$, sometimes it is useful to recall the precise form of the corresponding representation, of $M$, denoted also by $\p$. Namely, one has
\begin{equation*}
\p(m) = \begin{cases} \p(e_Xme_X) & \sigma(m)\geq X\\ 0 & \text{else.}\end{cases}
\end{equation*}

\section{The Rhodes radical squared and the quiver of an orthogroup}
In this section, we show how the computation of the quiver of a band can be reduced to the left regular case, handled in~\cite{Saliola}.  More generally, one can reduce the computation of the quiver of any orthogroup to that of a left regular band of groups, a case treated by the authors in~\cite{rrbg}.

Let $M$ be a monoid.  Let us define the \emph{Rhodes radical squared} $\rad^2(M)$ to be the congruence on $M$ associated to the mapping $M\to kM/\rad^2(kM)$.  Thus $(m,n)\in \rad^2(M)$ if and only if $m-n\in \rad^2(kM)$.  Notice that the kernel of the induced morphism $kM\to k[M/\rad^2(M)]$ is contained in $\rad^2(kM)$ and so $kM$ has the same quiver as $k[M/\rad^2(M)]$.

Recall that a band is called \emph{regular} if it satisfies the identity $xyxzx=xyzx$, cf.~\cite[Chapter 5]{Almeida:book}.  This terminology is unfortunate since all bands are von Neumann regular, but it is well-entrenched.  The variety of regular bands is the join of the varieties of left regular bands and right regular bands.  The next proposition implies that one can compute the quiver of any band if one can compute the quiver of a regular band.  This in turn will be reduced to the case of a left regular band, afterward.

\begin{Prop}\label{radsquaredDO}
Let $M$ be a rectangular monoid and let $e\in E(M)$.  Then $(emene,emne)\in \rad^2(M)$ for any $m,n\in M$.  In particular, if $B$ is a band, then $B/\rad^2(B)$ is a regular band.
\end{Prop}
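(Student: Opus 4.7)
The plan is to produce, for each $m,n\in M$ and $e\in E(M)$, an explicit factorization of $emene - emne$ as a product of two elements of $\rad(kM)$. The key algebraic identity, using only $e^{2}=e$, is
\[
(eme-em)(ene-ne) \;=\; -(emene-emne);
\]
expanding the left side gives $emeene - emene - emene + emne$, and $emeene=emene$ by $e^{2}=e$. It therefore suffices to prove that $eme-em$ and $ene-ne$ each lie in $\rad(kM)$.

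For this I will exploit the homomorphism $\ov{\sigma}_{*}\colon kM\to k\Lambda_{*}(M)$. By Proposition~\ref{nilpotentkernel} its kernel is a nilpotent two-sided ideal, hence is contained in $\rad(kM)$, so it is enough to show $\sigma_{*}(eme)=\sigma_{*}(em)$ (and symmetrically for $ene$ and $ne$). Since $\sigma\colon M\to \Lambda(M)$ is a homomorphism into a meet-semilattice,
\[
\sigma(eme)\;=\;\sigma(e)\wedge \sigma(m)\wedge \sigma(e)\;=\;\sigma(e)\wedge \sigma(m)\;=\;\sigma(em);
\]
call this common element $X$. Because $X\le \sigma(e)$ and $X\le \sigma(m)$, both $e$ and $m$ lie in $M_{X}$, so the retraction $\rho_{X}\colon M_{X}\to G_{X}$ of Proposition~\ref{definerestrict} applies to each. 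As $\rho_{X}$ is a homomorphism and $e^{2}=e$, the element $\rho_{X}(e)$ is an idempotent of the group $G_{X}$ and hence equal to the identity $e_{X}$. Therefore
\[
\rho_{X}(eme)\;=\;\rho_{X}(e)\rho_{X}(m)\rho_{X}(e)\;=\;e_{X}\rho_{X}(m)e_{X}\;=\;\rho_{X}(m)\;=\;\rho_{X}(e)\rho_{X}(m)\;=\;\rho_{X}(em),
\]
so $\sigma_{*}(eme)=\sigma_{*}(em)$, and hence $eme-em\in \rad(kM)$. The symmetric argument gives $ene-ne\in \rad(kM)$, and multiplying via the identity above places $emene-emne$ in $\rad^{2}(kM)$.

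For the second assertion, every element of a band $B$ is idempotent, so applying the first part with $e=x$, $m=y$, $n=z$ yields $(xyxzx,xyzx)\in \rad^{2}(B)$. Thus $B/\rad^{2}(B)$ satisfies the identity $xyxzx=xyzx$ defining regular bands, and being a quotient of a band is itself a band, so it is a regular band. I do not foresee a substantive obstacle; the main bookkeeping step is the semilattice computation of $\sigma(eme)$ together with the observation that any idempotent in $M_{X}$ is sent by $\rho_{X}$ to $e_{X}$.
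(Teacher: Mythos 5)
Your proof is correct and follows essentially the same route as the paper's: the identity $(eme-em)(ene-ne)=emne-emene$ combined with the fact that $eme-em$ and $ene-ne$ lie in $\rad(kM)$ because they have the same image under $\sigma_*$, whose induced algebra map has nilpotent kernel. The only difference is that you spell out the verification of $\sigma_*(eme)=\sigma_*(em)$ (via the semilattice computation and $\rho_X(e)=e_X$), which the paper dismisses as clear.
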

\begin{proof}
Clearly $\sigma_*(eme)=\sigma_*(em)$ and $\sigma_*(ene)=\sigma_*(ne)$ and so $eme-em$ and $ene-ne$ belong to $\rad(kM)$.  But \[(eme-em)(ene-ne) = emene-emene-emene+emne=emne-emene.\]  This completes the proof.
\end{proof}

Every band $B$ has a least congruence $\sim$ such that $B/{\sim}$ is a regular band.  The quotient is called the \emph{maximal regular band image} of $B$.  One can define similarly the maximal left and right regular band images of $B$.  It follows from Proposition~\ref{radsquaredDO} that the quiver of a band coincides with that of its maximal regular band image. In particular, the quiver of a free band coincides with the quiver of the free regular band.  We next show that one can reduce the computation of the quiver of a band to the case of left regular bands and their duals, right regular bands. It is known that a regular band is a subdirect product of a left and right regular band and that a band is regular if and only if Green's relations $\R$ and $\eL$ are congruences.

An orthogroup is a left regular band of groups if and only if it does not contain a two-element right zero semigroup, i.e., idempotents $e\neq f$ with $ef=f$ and $fe=e$.  Right regular bands of groups are characterized dually.  In particular, a band is left regular if and only if it does not contain a two-element right zero semigroup. Left (right) regular bands of groups are exactly the orthogroups whose idempotents form a left (right) regular band. An orthogroup is a \emph{regular band of groups} if its idempotents form a regular band. A regular band of groups is a subdirect product of a left regular and a right regular band of groups.

The results of~\cite{rrbg} imply that if $M$ is an orthogroup and $X,Y\in \Lambda(M)$ are incomparable, then $\Ext_{kM}^1(U,V)=0$ for any simple $kG_X$-module $U$ and simple $kG_Y$-module $V$.

If $M$ is a monoid, let $M_{\ell}$ be the quotient of $M$ by the smallest congruence identifying each two-element right zero subsemigroup of $M$ and dually let $M_{r}$ be the quotient of $M$ by the smallest congruence identifying each two-element left zero subsemigroup of $M$, i.e.,  $M_r=((M^{op})_{\ell})^{op}$.  It follows from standard results of semigroup theory, cf.~\cite[Chapter~4]{qtheor}, that $M_{\ell}$ (respectively, $M_r$) contains no two-element right (respectively left) zero subsemigroup.  In particular, if $M$ is a band (respectively, an orthogroup), then $M_{\ell}$ is the maximal left regular band (respectively, left regular band of groups) image of $M$, and dually for $M_r$.  Notice that if $M$ is a rectangular monoid, then $\sigma_*\colon M\to \Lambda_*(M)$ identifies right zero and left zero semigroups and thus factors through the projections $M\to M_{\ell}$ and $M\to M_r$.  It follows that $M$, $M_{\ell}$ and $M_r$ all have the same simple modules (lifted from $\Lambda^*(M)$).

\begin{Thm}\label{duh}
Let $M$ be a rectangular monoid, let $k$ be a splitting field for $M$ and suppose that $X,Y\in \Lambda(M)$ are comparable, say $X\leq Y$.  Let $U,V$ be simple $kG_X$-, $kG_Y$-modules respectively.   Then the number of arrows from $U$ to $V$ in the quiver of $kM$ is the number of arrows from $U$ to $V$ in the quiver of $kM_{\ell}$ and the number of arrows from $V$ to $U$ is the number of arrows from $V$ to $U$ in the quiver of $kM_r$.
\end{Thm}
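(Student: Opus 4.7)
The plan is to apply Corollary~\ref{derivationcomputation} to identify arrows from $U$ to $V$ in the Gabriel quiver of $kM$ with $\dim_{k} H^{1}(M, A)$ for $A := \Hom_k(U, V)$, viewed as an $M$-bimodule via $m \cdot \phi = \rho_Y(m)\phi$ and $\phi \cdot m = \phi\, \rho_X(m)$. The same formula computes the arrow count for $kM_\ell$, and the quotient $\pi \colon M \to M_\ell$ yields an inflation map $\pi^{\ast}\colon H^{1}(M_\ell, A) \to H^{1}(M, A)$ which is injective because $\pi$ is surjective. The goal is to show $\pi^{\ast}$ is an isomorphism by proving that every derivation $d\colon M \to A$ factors through $\pi$. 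A preliminary step, also needed to regard $A$ as an $M_\ell$-bimodule, is to verify that $\rho_X$ and $\rho_Y$ themselves factor through $\pi$: for any two-element right-zero subsemigroup $\{e, f\}$ of $M$ with $Z := MeM = MfM$, both $\rho_X(e)$ and $\rho_X(f)$ are either $0$ (when $X \not\leq Z$) or equal to the identity $e_X$ of $G_X$ (when $X \leq Z$, since each is an idempotent of the group $G_X$), and likewise for $\rho_Y$.

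The main step will be the identity $d(e) = d(f)$ for every two-element right-zero subsemigroup $\{e, f\}$ of $M$, proved by a three-case analysis on the position of $Z = MeM$ in $\Lambda(M)$ relative to $X \leq Y$. If $Y \leq Z$, then $e$ and $f$ act as the identity on $A$ from both sides, so Leibniz applied to $e = fe$ yields $d(e) = f \cdot d(e) + d(f) \cdot e = d(e) + d(f)$, forcing $d(f) = 0$, and symmetrically $d(e) = 0$. If $X \leq Z$ but $Y \not\leq Z$, the left action of $e$ and $f$ on $A$ is zero while the right action is the identity, giving $d(e) = d(fe) = 0 + d(f) \cdot e = d(f)$. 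Finally, if $X \not\leq Z$, both actions are zero, so $d(e) = d(fe) = 0 = d(f)$. It is exactly in the first case that the hypothesis $X \leq Y$ plays a role, ensuring that both actions are simultaneously trivial.

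To conclude, recall that the congruence $\sim_\ell$ collapsing $M$ to $M_\ell$ is generated by these right-zero pairs. Its closure consists of iterated rewrites $uev \leftrightarrow ufv$, and since the first paragraph shows $e - f$ acts as zero on $A$ from both sides, the Leibniz rule gives
\[
d(uev) - d(ufv) = d(u)(e - f)v + u\bigl(d(e) - d(f)\bigr)v + u(e - f)d(v) = 0.
\]
Thus $d$ is constant on $\sim_\ell$-classes and descends to a well-defined derivation $d'\colon M_\ell \to A$ with $d' \pi = d$; inner derivations match inner derivations, so $\pi^{\ast}$ is a bijection on $H^{1}$. This establishes the first assertion. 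The claim about arrows from $V$ to $U$ in $Q(kM_r)$ will follow by the same argument applied to the bimodule $\Hom_k(V, U)$, whose actions interchange the roles of $\rho_X$ and $\rho_Y$; two-element left-zero pairs ($ef = e$, $fe = f$) generate the congruence collapsing $M$ onto $M_r$, and the parallel three-case analysis again forces $d(e) = d(f)$. The main obstacle is the middle paragraph: verifying that the hypothesis $X \leq Y$ is what makes all three cases simultaneously collapse, since without it Case~A could mix nontrivial left and right actions and produce genuine outer derivations supported on right-zero pairs.
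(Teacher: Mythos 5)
Your proposal is correct and follows essentially the same route as the paper: identify arrows with outer derivations via Hochschild cohomology, then show every derivation $d\colon M\to\Hom_k(U,V)$ kills the right-zero pairs by the same three-case analysis on the position of $\sigma(e)$ relative to $X\leq Y$, so that $d$ descends to $M_\ell$. The only cosmetic difference is that the paper packages the descent by defining the congruence $m\sim n\iff\sigma_*(m)=\sigma_*(n),\ d(m)=d(n)$ and checking it is a congruence containing all right-zero pairs, whereas you unwind the generated congruence as iterated rewrites $uev\leftrightarrow ufv$; both are valid.
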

\begin{proof}
We show that if $d\colon M\to \Hom_k(U,V)$ is a derivation and $X\leq Y$, then it factors uniquely through $M_{\ell}$.  The remainder of the theorem is dual.   Define an equivalence relation on $M$ by $m\sim n$ if $\sigma_*(m)=\sigma_*(n)$ and $d(m)=d(n)$.  This is a congruence.  Indeed, if $m\sim n$, then $d(am) = ad(m)+d(a)m=ad(n)+d(a)n=d(an)$ and similarly $d(ma)=d(na)$.  Suppose that $\{e,f\}$ form a right zero semigroup.  Then $\sigma_*(e)=\sigma_*(f)$ and $d(e)=ed(e)+d(e)e$ and $d(f)=fd(f)+d(f)f$.  Thus if $\sigma(e)=\sigma(f)\geq Y$, then $d(e)=0=d(f)$ and if $\sigma(e)=\sigma(f)\ngeq X$, then also $d(e)=0=d(f)$.  If $Y>\sigma(e)=\sigma(f)\geq X$, then $d(e) = d(fe) = fd(e)+d(f)e=d(f)$.  Thus we see that $e\sim f$.  The result follows.
\end{proof}

Theorem~\ref{duh} reduces the computation of the quiver of any orthogroup to the case of a left regular band of groups and the computation of the quiver of any band to a left regular band.  The quiver of a left regular band was computed in~\cite{Saliola} and the quiver of a left regular band of groups was computed by the authors in~\cite{rrbg}.  However, the main result of this paper subsumes the case of orthogroups.

For the convenience of the reader, we recall the results of Saliola~\cite{Saliola} and the authors~\cite{rrbg}.
A classical definition from semigroup theory~\cite{Green} is that of the $\eL$-class $L_m $ and $\R$-class $R_m$ of an element $m$ in a monoid $M$:
\begin{align*}
L_m &= \{n\in M\mid Mn=Mm\};\\
R_m &= \{n\in M\mid nM=mM\}.
\end{align*}

\begin{Thm}[Saliola]\label{saliolamain}
Let $M$ be a left regular band and $k$ a field.  The vertex set of $Q(kM)$ is $\Lambda(M)$.  If $X,Y\in \Lambda(M)$, then the following holds.
\begin{enumerate}
\item If $X\nless Y$, then there are no arrows from $X$ to $Y$;
\item Otherwise, let $e_X,e_Y$ be idempotent generators of $X,Y$, respectively.  We may assume  $e_X<e_Y$. Set $N=e_YM_X=e_YM_Xe_Y$.  Let $L_X$ be the $\eL$-class of $e_X$ in $N$.  Define $\sim$ to be the least equivalence relation on $L_X$ such that:
    \begin{itemize}
 \item   $x\sim x'$ whenever $x,x'\in L_X$ and there exists $n\in N\setminus \{e_Y\}$ with $X<\sigma(n)$ and $nx=x, nx'=x'$.
    \end{itemize}
Then the number of arrows from $X$ to $Y$ is $|L_X/{\sim}|-1$.
\end{enumerate}
\end{Thm}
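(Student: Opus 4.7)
\emph{Proof plan.} Since a left regular band has trivial maximal subgroups, Corollary~\ref{reptheoryofHsiaosgp} tells us that the simple $kM$-modules are one-dimensional and parameterized by $X\in\Lambda(M)$; denote by $U_X$ the module where $m$ acts as multiplication by $\rho_X(m)\in\{0,1\}$. This identifies the vertex set of $Q(kM)$ with $\Lambda(M)$. To count arrows from $X$ to $Y$ I apply Corollary~\ref{derivationcomputation}: the answer is $\dim_k\Der(M,A_{X,Y})/\IDer(M,A_{X,Y})$, where $A_{X,Y}=\Hom_k(U_X,U_Y)\cong k$ carries the bimodule structure $m\cdot a\cdot n=\rho_Y(m)\rho_X(n)a$. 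A derivation is then a map $d\colon M\to k$ satisfying $d(mn)=\rho_Y(m)d(n)+d(m)\rho_X(n)$, and an inner derivation has the form $d_c(m)=(\rho_Y(m)-\rho_X(m))c$.

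The first step is to apply this identity to the band relation $m=m^{2}$: it gives $(1-\rho_X(m)-\rho_Y(m))d(m)=0$, so $d$ is automatically supported on the symmetric difference $M_X\triangle M_Y$. This handles the subcase $X=Y$ of item~(1), forcing $d\equiv 0$ (and $\IDer=0$). For $X,Y$ incomparable I take $m\in M_X\setminus M_Y$ and $n\in M_Y\setminus M_X$: then $mn,nm\notin M_X\cup M_Y$ since no $\sigma$-value can reach above $X\wedge Y$, and Leibniz on $nm$ yields $d(m)+d(n)=d(nm)=0$; thus $d$ is constant $-c$ on $M_X\setminus M_Y$ and $c$ on $M_Y\setminus M_X$, i.e.\ $d=d_c$. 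For $X>Y$: for any $m,m'\in M_Y\setminus M_X$, Leibniz gives $d(mm')=d(m')$, while the LRB identity $mm'=mm'm$ together with $mm'\in M_Y$ and $m\notin M_X$ yields $d(mm'm)=d(m)$; hence $d(m)=d(m')$ and $d$ is again inner. This completes item~(1).

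For item~(2) I assume $X<Y$, so $M_Y\subseteq M_X$ and $d$ is supported on $M_X\setminus M_Y$. Two consequences of Leibniz drive the reduction: $d(mn)=d(n)$ for $m\in M_Y$, and $d(mn)=d(m)$ for $m\in M_X\setminus M_Y$ and $n\in M_X$. The first gives $d(m)=d(e_Ym)$, pushing the problem into $N=e_YM_X$. Iterating the second by right-multiplying by $e_X$ and folding with $xy=xyx$, every $p\in N\setminus M_Y$ is matched with an element of $L_X$ of equal $d$-value, so $d$ is determined by $d|_{L_X}$. Now if $x,x'\in L_X$ and $n\in N\setminus\{e_Y\}$ has $\sigma(n)>X$, $nx=x$, $nx'=x'$, I evaluate the derivation identity on these equations and subtract to obtain $(1-\rho_Y(n))(d(x)-d(x'))=0$: when $\sigma(n)<Y$ this forces $d(x)=d(x')$, while the boundary case $\sigma(n)=Y$ is tautological. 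Hence $d|_{L_X}$ factors through $L_X/{\sim}$.

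Finally, I will show that every $\sim$-compatible function $L_X\to k$ extends to a derivation of $M$ by defining $d$ through the reduction above and verifying the Leibniz rule, invoking Proposition~\ref{DOcharacterization} and the LRB axiom to dispose of all cases. The inner derivations restrict on $L_X\subseteq M_X\setminus M_Y$ to the constant $-c$, giving a one-dimensional subspace of constants on $L_X/{\sim}$, so the quotient has dimension $|L_X/{\sim}|-1$, as required. The principal obstacle is this existence step: producing a derivation on all of $M$ from data on $L_X$ requires checking the Leibniz rule in every combination, most delicately when both arguments lie in $M_X\setminus M_Y$, where the left regular axiom must be applied repeatedly to guarantee consistency.
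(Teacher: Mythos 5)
Your proposal is correct and runs on the same engine the paper uses --- Corollary~\ref{derivationcomputation} plus a case split on how $X$ and $Y$ compare --- but you execute it directly for scalar-valued derivations $d\colon M\to k$, whereas the paper obtains Theorem~\ref{saliolamain} by specializing its general representing-object construction (Propositions~\ref{incomparablecase} and~\ref{comparablecase}, Theorem~\ref{thebigtheorem}, filtered through Theorem~\ref{orthogroupquiver} and Corollary~\ref{bandquiver}). What the direct route buys is a short, self-contained argument in which all modules are one-dimensional; what it loses is reusability for nontrivial maximal subgroups. Your item~(1) and the reduction of $d$ to $d|_{L_X}$ are sound as written, and the step you flag as the principal obstacle does go through with the definition you indicate: given a $\sim$-invariant $F\colon L_X\to k$, set $d(m)=F(e_Yme_X)$ for $m\in M_X\setminus M_Y$ and $d(m)=0$ otherwise (note $e_Yme_X\in L_X$ because the $\sigma=X$ fibre of $M$ is a left zero semigroup). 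The only Leibniz case that uses $\sim$-invariance is $m\in M_X\setminus M_Y$, $n\in M_X$, where one needs $F(e_Ymne_X)=F(e_Yme_X)$; the witness is $n'=e_Ym=e_Yme_Y\in N$, which is idempotent, fixes both elements on the left, and satisfies $n'\neq e_Y$; if $\sigma(n')=X$ the two elements already coincide in the left zero $\D$-class, and otherwise $\sigma(n')>X$ and $\sim$ applies. The case $m\in M_Y$ is immediate from $e_Ym=e_Y$, and the cases with $m$ or $n$ outside $M_X$ are trivial. Two small corrections: in the incomparable case the reason $nm\notin M_X\cup M_Y$ is that $\sigma(nm)\leq\sigma(n)\ngeq X$ and $\sigma(nm)\leq\sigma(m)\ngeq Y$ (not a bound by $X\wedge Y$, above which $\sigma(nm)$ in fact lies); and in item~(2) every $n\in N\setminus\{e_Y\}$ automatically has $\sigma(n)<Y$, since the $\sigma=Y$ fibre of $N$ is the singleton $\{e_Y\}$, so your ``boundary case'' never occurs among the generators of $\sim$.
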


The corresponding result for left regular bands of groups (which encompasses the previous result) is:

\begin{Thm}[Margolis/Steinberg]\label{rrbgmain}
Let $M$ be a left regular band of groups and $k$ a splitting field for $M$ such that the characteristic of $k$ does not divide the order of any maximal subgroup of $M$.  Let $X,Y\in \Lambda(M)$ and let $U,V$ be simple modules for $kG_X$ and $kG_Y$ respectively.  Let $e_X,e_Y$ be the respective identities of $G_X,G_Y$.
\begin{enumerate}
\item If $X\nless Y$, then there are no arrows from $U$ to $V$;
\item Otherwise, we may assume $e_X<e_Y$. Set $N=e_YM_X=e_YM_Xe_Y$.  Let $L_X$ be the $\eL$-class of $e_X$ in $N$.  Define $\sim$ to be the least equivalence relation on $L_X$ such that:
    \begin{itemize}
 \item   $x\sim x'$ if $x,x'\in L_X$ are such that $e_Xx=e_Xx'$ and there exists $n\in E(N)\setminus \{e_Y\}$ with $X<\sigma(n)$ and $nx=x, nx'=x'$.
    \end{itemize}
Denote by $[x]$ the $\sim$-equivalence class of $x\in L_X$.
Let $V_{X,Y}$ be the subspace of $k[L_X/{\sim}]$ with basis the differences $[x]-[e_Xx]$ with $x\in L_X\setminus G_X$.  Then $V_{X,Y}$ is a $G_Y\times G_X$-module via the action \[(g,h)([x]-[e_Xx]) = [gxh\inv]-[ge_Xxh\inv].\]  The number of arrows from $U$ to $V$ in $Q(kM)$ is then the multiplicity of $V\otimes_k U^*$ as an irreducible constituent of $V_{X,Y}$.
\end{enumerate}
\end{Thm}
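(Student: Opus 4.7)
The plan is to invoke Corollary~\ref{derivationcomputation}, which identifies the number of arrows from $U$ to $V$ with $\dim\Der(M,\Hom_k(U,V))/\IDer(M,\Hom_k(U,V))$, and then to exploit the very rigid structure of a left regular band of groups to pin down the space of outer derivations. Writing $A=\Hom_k(U,V)$, the left action of $m\in M$ factors through $\rho_Y\colon M\to G_Y\cup\{0\}$ (so is zero unless $\sigma(m)\ge Y$) and the right action factors through $\rho_X$ (so is zero unless $\sigma(m)\ge X$). In particular, a single element can act nontrivially on both sides only when $\sigma(m)$ is above both $X$ and $Y$, which is incompatible with $X$ and $Y$ incomparable; a short manipulation then shows every derivation is inner in that case, yielding part~(1). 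So assume $X<Y$ and $e_X<e_Y$, and set $N=e_YM_X$.

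The first substantive step is to reduce the problem to $L_X$. Because $M$ is a left regular band of groups, $e_Ym=e_Yme_Y$ for every $m\in M_X$, so $N=e_YM_Xe_Y$ is a submonoid. Using Proposition~\ref{grouphochiszero}, which trivializes Hochschild cohomology over $G_X$ and $G_Y$, I would modify any derivation $d$ by an inner derivation to arrange $d(e_X)=0=d(e_Y)$. The cocycle identity then gives $d(e_Ym)=e_Yd(m)$ and $d(xe_X)=d(x)e_X$, so that $d$ is determined by its restriction to $N$; a second application together with the retraction $\rho_Y\colon N\to G_Y$ shows that on $N$ the derivation is determined by its values on $L_X$, with the residual $G_Y$- and $G_X$-actions on $A$ supplying the rest.

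Next I would extract the equivalence relation $\sim$ and the inner-derivation subspace. If $n\in E(N)\setminus\{e_Y\}$ satisfies $X<\sigma(n)$ and $nx=x$, $nx'=x'$, then $d(x)=d(nx)=nd(x)+d(n)x$ and similarly for $x'$; subtracting and using $e_Xx=e_Xx'$ shows $d(x)-d(x')$ lies in the inner-derivation subspace, so any function $L_X\to A$ realized by a derivation must respect $\sim$. The inner derivations $d_a(m)=ma-am$, evaluated on $L_X$, span precisely the subspace generated by the differences $[x]-[e_Xx]$, which is exactly the recipe defining $V_{X,Y}$. The commuting left $G_Y$- and right $G_X$-actions on $L_X$ descend to the stated $G_Y\times G_X$-module structure on $V_{X,Y}$, and then
\[\dim H^1(M,A)=\dim\Hom_{G_Y\times G_X}(V\otimes_k U^{*},V_{X,Y}),\]
which by Schur's lemma (applicable since $k$ is a splitting field and the characteristic is coprime to $|G_Y|\,|G_X|$) equals the multiplicity asserted in the theorem.

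The main obstacle is the converse of the reduction: showing that every $G_Y\times G_X$-equivariant map $V\otimes U^{*}\to V_{X,Y}$ genuinely lifts to a derivation defined on all of $M$, not merely on $N$. One must define $d$ on an arbitrary $m\in M$ via a chosen decomposition involving $e_Y$, $e_X$ and an element of $L_X$, and then verify $d(mn)=md(n)+d(m)n$ is independent of choices and consistent for arbitrary products. This is exactly where the left regular band of groups hypothesis does the heavy lifting, through the absorption identities $e_Yme_Y=e_Ym$ and the conjugacy-of-idempotents structure that forces the transport of derivation values to be coherent; the good-characteristic assumption is needed at this point to average the group-theoretic portion of the construction so that it matches the combinatorial one on $L_X/{\sim}$.
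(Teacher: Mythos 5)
Your overall strategy is the one the paper actually uses (this theorem is recalled from the authors' earlier work and re-derived here from Proposition~\ref{comparablecase} via Theorem~\ref{orthogroupquiver}): identify arrows with outer derivations via Corollary~\ref{derivationcomputation}, and exhibit $V_{X,Y}$ as a representing object for $A\mapsto \Der(M,A)/\IDer(M,A)$ on $G_Y\times G_X^{op}$-modules. But there is a genuine gap exactly where you flag ``the main obstacle'': you never construct the lift. Everything you do prove only shows that a derivation, modulo inner ones, is determined by a $\sim$-compatible assignment on $L_X$, i.e.\ that the number of arrows is \emph{at most} the stated multiplicity. The other inequality requires showing that every equivariant map $f\colon V_{X,Y}\to A$ arises from a derivation, and this is the substantive half of the argument. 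The paper does it by the explicit formula $d(m)=f([e_Yme_X]-[e_Xme_X])$ and a case-by-case verification of the Leibniz rule using Proposition~\ref{DOcharacterization} and the defining relations of $\sim$; no averaging is involved, and the good-characteristic hypothesis plays no role in this step (it is used only to make $k[G_Y\times G_X]$ semisimple so that $\dim\Hom$ computes a multiplicity, and, in the general rectangular setting, to kill $H^1(G_X,A)$ in the diagonal case). Your closing sentence, which locates the difficulty in an averaging argument enabled by good characteristic, points at the wrong mechanism, so the gap is not merely one of omitted routine detail.

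Two smaller problems. First, part~(1) of the theorem covers $X=Y$ and $Y<X$ as well as the incomparable case, and you only address the latter; the case $X=Y$ needs regularity (which gives $e_XMe_X\setminus G_X\subseteq (\nup X)^2$, so the relevant module vanishes) together with $H^1(G_X,A)=0$, and the case $Y<X$ needs the dual computation plus left regularity of the idempotents. Second, your argument for the incomparable case is not correct as stated: it is perfectly possible for $\sigma(m)$ to lie above two incomparable elements $X,Y$ (e.g.\ $m=1$), so ``acting nontrivially on both sides'' is not what is ruled out. The correct point is that any $z\in e_YMe_X$ has $\sigma(z)\leq X\wedge Y$, hence acts as zero on both sides of $A$, and by regularity $e_YMe_X\subseteq \nup Y\nup X$, forcing $V_{X,Y}=0$.
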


We remark that in~\cite{rrbg} the authors worked with right modules and right regular bands of groups.  Also we formulated the result in a slightly different way: we counted the multiplicity of $V$ as an irreducible constituent of the $kG_Y$-module $V_{X,Y}\otimes_{kG_X} U$.  But it is not difficult to see that the isomorphisms
\begin{align*}
\Hom_{k[G_Y\times G_X]}(V_{X,Y}, V\otimes_k U^*)&\cong \Hom_{kG_Y}(V_{X,Y}\otimes_{kG_X} U,V)\\& \cong \Hom_{kG_X}(U,\Hom_{kG_Y}(V_{X,Y},V))
\end{align*}
hold.

Theorems~\ref{duh},~\ref{saliolamain} and~\ref{rrbgmain} effectively compute the quiver of an arbitrary band or orthogroup.

\begin{Example}
It is shown in~\cite{Saliola} that the quiver of the free left regular band on $A$ has vertex set the subsets of $A$.  There are $|X\setminus Y|-1$ arrows from $X$ to $Y$ if $Y\subsetneq X\subseteq A$ and no arrows otherwise.  If $FB(A)$ is the free band on $A$, then $FB(A)_{\ell}$ is the free left regular band on $A$ and $FB(A)_r$ is the free right regular band on $A$.  It follows that the quiver of $FB(A)$ is as follows.  The vertex set is again the power set of $A$.  There are $|X\setminus Y|-1$ arrows from $X$ to $Y$ and from $Y$ to $X$ if $Y\subsetneq X\subseteq A$ and these are all the arrows.
One obtains the same quiver also for the free regular band on $A$ and, in fact, for any relatively free band monoid in a variety above regular bands.

Saliola~\cite{Saliola} showed (credited to K.~Brown) that the algebra of the free left regular band is hereditary, which means that its quiver (as can be seen from the description above) has no directed cycles and there are no quiver relations (i.e., the admissible ideal is trivial). Clearly, the algebra of the free (regular) band is not hereditary because our description shows that there are directed cycles in its quiver. An important problem would be to determine the quiver relations of the free (regular) band.  In a forthcoming paper~\cite{hereditarybands}, we describe how to compute the global dimension of a left regular band algebra using topological methods.  As a consequence, we show that a large number of left regular bands, including the free left regular band, have hereditary algebras.  On the other hand, we show that every split basic hereditary algebra is a left regular band algebra.
\end{Example}

\section{The quiver for special classes of rectangular monoids}\label{stateresults}
In this section, we state without proof a description of the quiver for several subclasses of rectangular monoids that have popped up in the literature.  The proofs will appear in the next section.

\subsection{Irreducible morphisms in the Karoubi envelope}
It turns out to be convenient to describe the quiver of certain classes of rectangular monoids in terms of the irreducible and almost irreducible morphisms of the Karoubi envelope. Let us recall the notion of an irreducible morphism in a category in the sense of Auslander and Reiten~\cite{assem,AuslanderReiten}.  If $\mathscr C$ is a category, then a morphism $f\in \mathscr C(c,d)$ is \emph{irreducible} if it is neither a split monomorphism, nor a split epimorphism, and whenever $f=gh$, then either $h$ is a split monomorphism or $g$ is a split epimorphism.  The set of irreducible morphisms $f\colon c\to d$ of $\mathscr C$ will be denoted $\Irrm_{\mathscr C}(c,d)$.

It will also be convenient to consider a new notion, that of an almost irreducible morphism.  Let us say that a morphism $f\colon c\to d$ of $\mathscr C$ is \emph{almost irreducible} if whenever $f=gh$, then either $h$ is a split monomorphism or $g$ is a split epimorphism.  In other words, we remove the restriction that $f$ itself not be a split monomorphism or split epimorphism.  We denote the set of almost irreducible morphisms $f\colon c\to d$ by $\mathsf{AIrr}_{\mathscr C}(c,d)$.

If $\mathscr C$ is a small category, we define the \emph{quiver} $Q(\mathscr C)$ of $\mathscr C$ to be the quiver with vertex set the isomorphism classes of objects of $C$ and arrows the irreducible morphisms between fixed representatives of the isomorphism classes.  This is in analogy with the case of a $k$-linear category $\mathscr A$ with local endomorphism algebras, where the vertex set of its quiver consists of isomorphism classes of objects and the number of arrows between two isomorphism classes of objects is the dimension of the $k$-vector space of irreducible morphisms between representative objects of the isomorphism class~\cite{assem,Gabrielbook}.

Let us describe the irreducible morphisms in the Karoubi envelope $\mathscr K(M)$ of a monoid $M$.

\begin{Lemma}\label{irredmorphisminKaroubi}
Let $M$ be a monoid and $\mathscr K(M)$ be its Karoubi envelope.  Let $e,f\in E(M)$.  Then $m\in \Irrm_{\mathscr K(M)}(e,f)$ if and only if the following conditions occur:
\begin{enumerate}
\item\label{irredmorphisminKaroubi.1} $Mm\subsetneq Me$;
\item\label{irredmorphisminKaroubi.2} $mM\subsetneq fM$;
\item\label{irredmorphisminKaroubi.3} $m=ab$ with $a\in fM$ and $b\in Me$ implies either $aM=fM$ or $Mb=Me$.
\end{enumerate}
The morphism $m\colon e\to f$ is almost irreducible if and only if (3) holds.
\end{Lemma}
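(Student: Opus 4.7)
The plan is to leverage Proposition~\ref{splitmonosKaroubi}, which translates the notions of split monomorphism, split epimorphism, and isomorphism in $\mathscr K(M)$ into principal ideal conditions. Under that proposition, conditions (1) and (2) are literally the statements that $m$ is neither a split monomorphism nor a split epimorphism in $\mathscr K(M)$. Hence it suffices to show that condition (3) is equivalent to the almost irreducibility of $m$; this will immediately yield both halves of the lemma.

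For one direction, assume (3). Given any factorization $m=gh$ in $\mathscr K(M)$, say $g\in fMe'$ and $h\in e'Me$ for some $e'\in E(M)$, we have in particular $g\in fM$ and $h\in Me$, so (3) forces $gM=fM$ or $Mh=Me$. Crucially, Proposition~\ref{splitmonosKaroubi} says that split-monicity of $h\colon e\to e'$ is governed by the \emph{domain} $e$ (not the codomain $e'$), so $Mh=Me$ exactly says $h$ is a split monomorphism; analogously $gM=fM$ says $g$ is a split epimorphism. Thus $m$ is almost irreducible.

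For the reverse direction, suppose $m$ is almost irreducible and let $m=ab$ in $M$ with $a\in fM$ and $b\in Me$. The key observation is that the monoid identity $1$ is itself an idempotent and hence an object of $\mathscr K(M)$, and the factorization $m=ab$ is \emph{already} a factorization in $\mathscr K(M)$ routed through $1$: since $a\in fM=fM\cdot 1$ and $b\in Me=1\cdot Me$, we may view $a$ as a morphism $1\to f$ and $b$ as a morphism $e\to 1$, composing to $m$. Applying almost irreducibility to this particular factorization, together with Proposition~\ref{splitmonosKaroubi}, forces $aM=fM$ or $Mb=Me$, which is (3).

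I do not anticipate a serious obstacle: the entire argument is a translation dictionary between factorizations in $M$ and factorizations in $\mathscr K(M)$, enabled by the elementary fact that $1\in M$ serves as a universal intermediate object in the Karoubi envelope. The only subtle point to keep in mind while writing is that the split-mono condition for a morphism in $\mathscr K(M)$ is keyed to its domain, so that the $Mh=Me$ appearing in (3) (with $e$ the domain of $b$) is exactly what one needs, not $Mh=Me'$.
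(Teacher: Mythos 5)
Your proof is correct and follows essentially the same route as the paper's: reduce to showing that condition (3) characterizes almost irreducibility via Proposition~\ref{splitmonosKaroubi}, and use the identity $1\in E(M)$ as the intermediate object to turn a factorization $m=ab$ in $M$ into a factorization $e\to 1\to f$ in $\mathscr K(M)$. Your remark that the split-mono condition is keyed to the domain (and split-epi to the codomain) is exactly the point the paper's argument relies on.
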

\begin{proof}
In light of Proposition~\ref{splitmonosKaroubi}, it suffices to show that condition (3) is equivalent to being almost irreducible. Suppose first $m\colon e\to f$ is almost irreducible.  If $m=ab$ with $a\in fM$ and $b\in Me$, then $a\in \mathscr K(M)(1,f)$, $b\in \mathscr K(M)(e,1)$ and $m=ab$ in $\mathscr K(M)$.  Thus, since $m\colon e\to f$ is almost irreducible, it follows that $b$ is a split monomorphism or $a$ is a split epimorphism.  Proposition~\ref{splitmonosKaroubi} now yields (3).

Conversely, suppose that $m$ satisfies (3) and $b\colon e\to e'$ and $a\colon e'\to f$ are morphisms of $\mathscr K(M)$ such that $m=ab$.  Then by (3), either $aM=fM$ or $Mb=Me$.  But then Proposition~\ref{splitmonosKaroubi} implies either $b\colon e\to e'$ is a split monomorphism or $a\colon e'\to f$ is a split epimorphism.
\end{proof}

Condition (3) of Lemma~\ref{irredmorphisminKaroubi} can be simplified for finite monoids.  To do so, we shall need a fundamental property of finite monoids known as \emph{stability}.  See~\cite[Appendix]{qtheor} for the classical proof.  We reproduce the short proof from~\cite{transformations} for the convenience of the reader.

\begin{Prop}\label{stabilityprop}
Let $M$ be a finite monoid. Then $M$ is stable, that is,
\begin{align*}
MmnM = MmM&\iff mnM=mM\\ MnmM=MmM&\iff Mnm=Mm
\end{align*}
for all $m,n\in M$.
\end{Prop}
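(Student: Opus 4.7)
The reverse implications are immediate: $mnM = mM$ gives $MmnM = MmM$ by multiplying on the left by $M$, and similarly for the other direction.

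For the forward direction of the first equivalence, assume $MmnM = MmM$. Since $m \in MmM = MmnM$, we can pick $u,v \in M$ with $m = umnv$. The plan is to push the $n$ past $m$ by an idempotent trick. A straightforward induction on $k$ gives
\[ m \;=\; u^{k}\, m\, (nv)^{k} \qquad \text{for all } k \geq 1. \]
Choosing $k$ large enough that $(nv)^{k} = (nv)^{\omega} =: e$ is idempotent (which is possible by finiteness of $M$), we obtain $m = u^{k} m e$. Multiplying this identity on the right by $e$ and using $e^{2}=e$ yields $me = u^{k} m e = m$, i.e.\ $m = me$. Since $e = (nv)^{\omega} = n \cdot v(nv)^{\omega - 1} \in nM$, we may write $e = nw$ for some $w \in M$, and therefore $m = me = mnw \in mnM$. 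Combined with the trivial inclusion $mnM \subseteq mM$, this gives $mnM = mM$.

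The second equivalence is proved dually, by symmetric reasoning: if $MnmM = MmM$, pick $u,v$ with $m = unmv$, iterate to get $m = (un)^{k} m v^{k}$, take $k$ with $f = (un)^{\omega}$ idempotent, deduce $fm = m$, and observe $f \in Mn$ to conclude $m \in Mnm$.

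The only nontrivial step is the passage from $m = umnv$ to $m \in mnM$; this is where finiteness enters, via the existence of the idempotent power $(nv)^{\omega}$. No further obstacle arises, as the manipulation with $e$ is purely formal once this idempotent is in hand.
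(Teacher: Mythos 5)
Your proof is correct, but it follows a different route from the paper's. The paper argues by counting: from $m=umnv$ it deduces the chain $mM\subseteq umnM$ and hence $|mM|\leq |umnM|\leq |mnM|\leq |mM|$, so the inclusion $mnM\subseteq mM$ is an equality of finite sets. You instead iterate $m=u\,m\,(nv)$ to $m=u^{k}m(nv)^{k}$, pass to an idempotent power $e=(nv)^{k}$, and extract the explicit identity $m=me=mnw$ with $w=v(nv)^{k-1}$. Both arguments use finiteness in an essential way (the paper through cardinalities of the finite sets $mM$, $mnM$; you through the existence of an idempotent power $x^{\omega}$). The paper's version is shorter; yours is more constructive, since it exhibits a concrete witness $w\in M$ with $mnw=m$ rather than merely concluding set equality, and the idempotent-power technique is the one that transfers to settings (e.g.\ compact or stable infinite semigroups) where counting is unavailable. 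All the steps check out: the induction $m=u^{k}m(nv)^{k}$ is valid, $me=u^{k}me^{2}=u^{k}me=m$ gives $m=me$, and $e=(nv)^{k}\in nM$ for $k\geq 1$, so $m\in mnM$ and hence $mM\subseteq mnM$; the dual computation with $f=(un)^{k}$ and $fm=m$ handles the second equivalence.
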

\begin{proof}
Clearly $mnM=mM$ implies $MmnM=MmM$.  For the converse, suppose that $umnv=m$ with $u,v\in M$.  Then $mM\subseteq umnM$ and $mnM\subseteq mM$, whence $|mM|\leq |umnM|\leq |mnM|\leq |mM|$.  It follows $mnM=mM$.  The other equivalence is dual.
\end{proof}

Recall that an element $m$ of a monoid $M$ is \emph{regular} if $m\in mMm$.  Equivalently, $m$ is regular if and only if $mM=eM$ for some $e\in E(M)$, if and only if $Mm=Mf$ for some $f\in E(M)$.  If $M$ is finite, $m$ is regular if and only if $MmM=MeM$ for some $e\in E(M)$. See~\cite[Appendix A]{qtheor} or~\cite{CP} for details.  Non-regular elements are termed \emph{null elements}.  If $A\subseteq M$, then we will denote by $\Null(A)$ the set of null elements of $A$.

\begin{Prop}\label{simplifyirred}
Let $M$ be a finite monoid and $e,f\in E(M)$.  Let $X=MeM$ and $Y=MfM$.  Then
\begin{align*}
\mathsf{AIrr}_{\mathscr K(M)}(e,f)&=fMe\setminus \nup Y\nup X \\ \Irrm_{\mathscr K(M)}(e,f)&=\Null(\AIrr_{\mathscr K(M)}(e,f)).
\end{align*}
In particular, $\AIrr_{\mathscr K(M)}(e,f)$ and $\Irrm_{\mathscr K(M)}(e,f)$ are $G_f\times G_e^{op}$-invariant subsets of $fMe$.
\end{Prop}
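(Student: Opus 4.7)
The plan is to translate Lemma~\ref{irredmorphisminKaroubi}(3) through stability (Proposition~\ref{stabilityprop}) into the language of the ideals $\nup X$, $\nup Y$, and then handle the ``null'' versus ``almost irreducible'' distinction separately. Stability says that for $a\in fM$, one has $aM=fM\iff MaM=MfM\iff f\in MaM$, i.e.\ $a\notin\nup Y$; dually, for $b\in Me$, $Mb=Me\iff b\notin\nup X$. Thus almost irreducibility of $m\in fMe$ fails precisely when $m$ admits a factorization $m=ab$ with $a\in fM\cap\nup Y$ and $b\in Me\cap\nup X$.

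For part (a), this already proves the nontrivial equivalence up to checking that the existence of such a constrained factorization is the same as $m\in\nup Y\cdot\nup X$ (a product of subsets of all of $M$, with no $fM$- or $Me$-constraint). One direction is immediate. For the converse I would use the absorption trick $m=fme$: given an arbitrary factorization $m=a'b'$ with $a'\in\nup Y$, $b'\in\nup X$, replace it by $m=(fa')(b'e)$. Since $M(fa')M\subseteq Ma'M$ still misses $f$ and $M(b'e)M\subseteq Mb'M$ still misses $e$, the new factors lie in $fM\cap\nup Y$ and $Me\cap\nup X$, as required.

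For part (b), I would prove that within $\AIrr_{\mathscr K(M)}(e,f)$, irreducibility is equivalent to nullness. If $m$ is regular, say $m=mxm$, apply the almost irreducibility condition to the factorization $m=(m)(xm)$ (with $m\in fM$ and $xm\in Me$): either $mM=fM$, so $m$ is a split epi, or $M(xm)=Me$, which together with $Mm\subseteq Me$ forces $Mm=Me$, so $m$ is a split mono. Conversely, any split mono $m$ in $\mathscr K(M)(e,f)$ satisfies $Mm=Me$, so there is $x$ with $xm=e$ and then $m=me=m(xm)\in mMm$; dually for split epis. Invoking Proposition~\ref{splitmonosKaroubi} and the definition of irreducibility (almost irreducible, not split mono, not split epi) gives the claimed equality $\Irrm_{\mathscr K(M)}(e,f)=\Null(\AIrr_{\mathscr K(M)}(e,f))$.

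For the $G_f\times G_e^{op}$-invariance, given $g\in G_f$ and $h\in G_e$, any decomposition $gmh=ab$ with $a\in fM$, $b\in Me$ yields $m=(g^{-1}a)(bh^{-1})$, once more with $g^{-1}a\in fM$ and $bh^{-1}\in Me$. Almost irreducibility of $m$ then forces $(g^{-1}a)M=fM$ or $M(bh^{-1})=Me$; multiplying on the left by $g$ (using $gM=fM$, since $g\in G_f$) or on the right by $h$ (using $Mh=Me$) recovers $aM=fM$ or $Mb=Me$, so $gmh$ is almost irreducible. Regularity is a $\mathscr J$-class invariant, and $MgmhM=MmM$ because $g,h$ are units in $fMf,eMe$ (the inclusion $MgmhM\subseteq MmM$ is automatic, and $m=g^{-1}(gmh)h^{-1}$ gives the reverse), so nullness is preserved as well. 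The only delicate point is the absorption argument at the end of part (a); everything else is a routine manipulation of principal ideals and stability.
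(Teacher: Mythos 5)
Your proof is correct and follows essentially the same route as the paper's: you translate Lemma~\ref{irredmorphisminKaroubi}(3) into the language of $\nup X$, $\nup Y$ via stability, use the absorption $m=fme$ to pass between unconstrained and constrained factorizations, and identify the regular elements of $\AIrr_{\mathscr K(M)}(e,f)$ with the split monomorphisms and epimorphisms exactly as the paper does (via the factorization $m=m(xm)$ coming from $m=mxm$). The equivariance argument you spell out is the same routine fact the paper leaves implicit.
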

\begin{proof}
If $m\colon e\to f$ is almost irreducible and $m=st$ with $s,t\in M$, then $m=ab$ where $a=fs$ and $b=te$. Condition (3) of Lemma~\ref{irredmorphisminKaroubi} now implies $aM=fM$ or $Mb=Me$.  Thus we cannot have both $s\in \nup Y$ and $t\in \nup X$.  Conversely, suppose $m\notin \nup Y\nup X$ and $m=ab$ with $a\in fM$ and $b\in Me$.  Then $a\in Y$ or $b\in X$.  In the former case, we have $MfaM=MaM=MfM$ and so by stability $aM=faM=fM$.  Dually, if $b\in X$, then $Mb=Me$.  We conclude that $m\colon e\to f$ is almost irreducible.

Suppose now that $m\colon e\to f$ is irreducible.  Then it is almost irreducible. It remains to show that $m$ is null. Conditions \eqref{irredmorphisminKaroubi.1} and \eqref{irredmorphisminKaroubi.2} of Lemma~\ref{irredmorphisminKaroubi} imply $eM\neq mM$ and $Mf\neq Mm$.  Therefore, stability yields $m\in \nup Y\cap \nup X$.  But then, if $m$ were regular we would have $m=mnm$ for some $n\in M$ and so $m\in \nup Y\nup X$, contradicting that $m\in \AIrr_{\mathscr K(M)}(e,f)$.  Thus $m$ is null.  Conversely, suppose $m\in \Null(\AIrr_{\mathscr K(M)}(e,f))$.  Then since $m$ is not regular, conditions \eqref{irredmorphisminKaroubi.1} and \eqref{irredmorphisminKaroubi.2} of Lemma~\ref{irredmorphisminKaroubi} hold.  Thus $m\colon e\to f$ is irreducible.
\end{proof}

\begin{Rmk}\label{casesofAIrr}
We note here several easy consequences of Proposition~\ref{simplifyirred}.
 First observe that \[\mathsf{AIrr}_{\mathscr K(M)}(e,f)=fMe\setminus f\nup Y\nup Xe\] and that $f\nup Y\subseteq MYM\setminus Y$, $\nup Xe\subseteq MXM\setminus X$.  This is useful for simplifying computations.

 Next observe that if $e,f\in E(M)$ with $MeM$ and $MfM$ incomparable, then  $\Irrm_{\mathscr K(M)}(e,f)=\AIrr_{\mathscr K(M)}(e,f)$ since there are no regular elements in $fMe\setminus \nup Y\nup X$ in this case.

 If $MeM\subsetneq MfM$, then $\AIrr_{\mathscr K(M)}(e,f)$ consists of $\Irrm_{\mathscr K(M)}(e,f)$ together with the split monomorphisms from $e$ to $f$, i.e., those $m\in fMe$ with $Me=Mm$.

 Dually, if $MeM\supsetneq MfM$, then $\AIrr_{\mathscr K(M)}(e,f)$ consists of $\Irrm_{\mathscr K(M)}(e,f)$ together with the split epimorphisms from $e$ to $f$, i.e., those $m\in fMe$ with $fM=mM$.

 Finally, one has that $\AIrr_{\mathscr K(M)}(e,e)=\Irrm_{\mathscr K(M)}(e,e)\cup G_e$.
\end{Rmk}

In~\cite{Jtrivialpaper}, it is observed that for idempotent-generated $\J$-trivial monoids, quiver computations simplify.  The next proposition indicates the general result that can be used to simplify the computations.

\begin{Prop}\label{regulargenerated}
Let $M$ be a finite rectangular monoid generated by regular elements.  Let $e,f\in E(M)$.
\begin{enumerate}
\item\label{regulargenerated.1} If $MeM$ and $MfM$ are incomparable, then every element $m\in \AIrr_{\mathscr K(M)}(e,f)=\Irrm_{\mathscr K(M)}(e,f)$ can be factored as a split monomorphism followed by a split epimorphism, i.e.,  is of the form $ba$ with $Ma=Me$ and $bM=fM$.
\item\label{regulargenerated.2} If $MeM\subsetneq MfM$, then $\AIrr_{\mathscr K(M)}(e,f)$ consists of all split monomorphisms, i.e., elements $m\in fMe$ with $Me=Mm$.
\item\label{regulargenerated.3} If $MeM\supsetneq MfM$, then $\AIrr_{\mathscr K(M)}(e,f)$ consists of all split epimorphisms, i.e., elements $m\in fMe$ with $fM=mM$.
\item\label{regulargenerated.4}  $\AIrr_{\mathscr K(M)}(e,e)=G_e$.
\end{enumerate}
\end{Prop}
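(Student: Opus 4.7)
The plan is to apply Proposition~\ref{simplifyirred}, which identifies $\AIrr_{\mathscr K(M)}(e,f)$ with $fMe\setminus \nup Y \nup X$, together with the hypothesis that $M$ is generated by regular elements: any $m$ in this set can be written as $m=r_1\cdots r_n$ with each $r_i$ regular. Since the support map $\sigma\colon M\to \Lambda(M)$ is a homomorphism, $\sigma(m)=\bigwedge_i \sigma(r_i)$. The core observation used throughout is: if the set $I_X=\{i:r_i\in\nup X\}$ is non-empty, then $m\in\nup X \nup X$. Indeed, if $|I_X|\geq 2$ we split $m$ between two distinct indices in $I_X$; if $I_X=\{i\}$, the regularity $r_i=r_i u r_i$ lets us write $m=(r_1\cdots r_i u)(r_i\cdots r_n)$, and both factors contain $r_i$ so lie in $\nup X$. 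An entirely analogous statement holds for $\nup Y$.

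For case \eqref{regulargenerated.4}, if $m\in eMe\setminus G_e$, then $\sigma(m)<X$: indeed, $\sigma(m)=X$ would force $MmM=X$, hence $m$ is regular and lies in the $\D$-class $D_e$, and $m\in eMe$ together with the fact that $D_e$ is a completely simple subsemigroup (so $D_e\cap eMe = R_e\cap L_e = G_e$) would give $m\in G_e$, contradicting the choice of $m$. The core observation then yields $m\in\nup X \nup X$, so $m\notin \AIrr(e,e)$; combined with $G_e\subseteq\AIrr(e,e)$ from Remark~\ref{casesofAIrr}, this proves the equality. For case \eqref{regulargenerated.2} (where $X\subsetneq Y$), note that $\nup X\subseteq\nup Y$, so the core observation applied at $X$ yields $\nup X \nup X\subseteq\nup Y \nup X$; hence any $m\in\AIrr(e,f)$ must satisfy $\sigma(m)=X$, which forces $m$ regular with $m\in D_e$, and $m\in Me$ together with the completely simple structure of $D_e$ gives $m\eL e$, i.e., $Mm=Me$. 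Case \eqref{regulargenerated.3} is dual.

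For case \eqref{regulargenerated.1}, write $m=r_1\cdots r_n$ with $r_1=f$ and $r_n=e$ (achieved by prepending and appending these regular elements). Incomparability of $X$ and $Y$ forces $\sigma(m)<X$ and $\sigma(m)<Y$, so both $I_X$ and $I_Y$ are non-empty. If $I_X\cap I_Y\ne\emptyset$, pick $i$ in the intersection and use regularity of $r_i$ to get a factorization $m=cd$ with $c\in\nup Y$ and $d\in\nup X$, contradicting $m\notin\nup Y \nup X$. If $I_X\cap I_Y=\emptyset$ but there exist $i\in I_Y$ and $j\in I_X$ with $i<j$, a plain split between positions $i$ and $j$ yields the same contradiction. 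The only remaining configuration is $\max I_X<\min I_Y$; set $i^*=\max I_X$, $p_{i^*}=r_1\cdots r_{i^*}$, and $q_{i^*}=r_{i^*+1}\cdots r_n$. Each $r_i$ with $i\leq i^*$ lies in $M_Y$, so $\sigma(p_{i^*})\geq Y$; combined with $p_{i^*}\in fM$ forcing $\sigma(p_{i^*})\leq Y$, we obtain $\sigma(p_{i^*})=Y$. Hence $Mp_{i^*}M=Y$ is primary, $p_{i^*}$ is regular, and $p_{i^*}\in D_f$. Since $D_f$ is completely simple and $p_{i^*}\in fM\cap D_f$, the Rees matrix description of $D_f$ yields $p_{i^*}\R f$, i.e., $p_{i^*} M=fM$. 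Symmetrically $Mq_{i^*}=Me$, and setting $b=p_{i^*}$ and $a=q_{i^*}$ yields the factorization $m=ba$ with the required properties. The main obstacle is this last structural step: upgrading the primary-ideal equality $\sigma(p_{i^*})=Y$ (which on its own only says $p_{i^*}\J f$) to the stronger $p_{i^*}\R f$; this upgrade rests on the rectangular-monoid-specific fact that regular $\D$-classes are completely simple subsemigroups, which controls exactly how $fM$ meets such a $\D$-class.
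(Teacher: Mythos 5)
Your proof is correct and follows essentially the same route as the paper's: factor $m$ into regular generators, use the trick $r_i=r_iur_i$ to duplicate a single "bad" generator and thereby place $m$ in $\nup Y\nup X$, and then use stability to upgrade the resulting $\sigma$-equalities to $\R$- and $\eL$-equalities. The only differences are organizational (the paper splits the word just before the first generator outside $M_Y$ rather than just after the last one outside $M_X$, and derives (2)--(4) by showing there are no null elements in $fMe\setminus\nup Y\nup X$), but the key ideas coincide.
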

\begin{proof}
Let $A$ be a set of regular elements generating $M$.  Let $e,f\in E(M)$ and let $X$ and $Y$ be the conjugacy classes of $e$ and $f$ respectively. To prove \eqref{regulargenerated.1}, suppose that  $m\in \AIrr_{\mathscr K(M)}(e,f)$ and $m=a_1\cdots a_k$.  Assume $a_ib_ia_i=a_i$ with $b_i\in M$.  Since $m\in fMe$, it follows that $m\in \nup Y\cap \nup X$.  Thus there exists a least index $i$ such that $\sigma(a_i)\ngeq Y$.  Suppose that $\sigma(a_j)\ngeq X$ for some $j\geq i$.  Then $m=a_1\cdots a_i(b_ia_i\cdots a_j)a_{j+1}\cdots a_k$ and so belongs to $\nup Y\nup X$. This contradiction establishes that $\sigma(a_j)\geq X$ for $j\geq i$. Putting $b=fa_1\cdots a_{i-1}$ and $a=a_i\cdots a_ke$, we have $m=ba$ with $Me=Ma$ and $bM=fM$.

Next we observe that if $MeM\subseteq MfM$, then there are no null elements in $fMe\setminus \nup Y\nup X$.  Indeed, if $m\in fMe$ is a null element and $m=a_1\cdots a_k$ with the $a_i\in A$, then there must be some $a_j$ with $\sigma(a_j)\ngeq X$.  Thus if $a_jb_ja_j=a_j$ with $b_j\in M$, then $m = (a_1\cdots a_jb_j)(a_j\cdots a_k)\in \nup Y\nup X$.  Dually, if $MeM\supseteq MfM$, then there are no null elements in $fMe\setminus \nup Y\nup X$.  In light of Proposition~\ref{simplifyirred} and Remark~\ref{casesofAIrr}, \eqref{regulargenerated.2}--\eqref{regulargenerated.4} follow.
\end{proof}

For instance, if $M$ is $\mathscr J$-trivial and generated by idempotents, then in case Proposition~\ref{regulargenerated}\eqref{regulargenerated.1}, either $\AIrr_{\mathscr K(M)}(e,f)$ is empty or $\AIrr_{\mathscr K(M)}(e,f)=\{fe\}$.  In the remaining cases there is only one element in this set.

\subsection{Orthogroup and bands}
Let us begin with the case of orthogroups.  In the previous section, we have in principle reduced this case to that of left regular bands of groups, which we studied in our previous paper~\cite{rrbg}.  But let us give the direct answer here, which is perhaps computationally more efficient.

\begin{Thm}\label{orthogroupquiver}
Suppose that $M$ is an orthogroup, $k$ is a splitting field for $M$ and the characteristic of $k$ divides the order of no maximal subgroup of $M$.  Let $X,Y\in \Lambda(M)$ and suppose that $U\in \Irr(G_X)$, $V\in \Irr(G_Y)$.  Let $e_X,e_Y$ be the respective identities of $G_X$ and $G_Y$.  Then the following hold.
\begin{enumerate}
 \item If $X,Y$ are incomparable or $X=Y$, then there are no arrows from $U$ to $V$ in $Q(kM)$.
 \item If $X<Y$, then without loss of generality we may assume $e_X < e_Y$.  Let
 $\equiv$ be the smallest equivalence relation on $e_YL_{e_X}$ such that the following hold:
\begin{itemize}
\item [(a)] $e_Ymx\equiv e_Yme_Yx$ for $x\in L_{e_X}$ and $\sigma(m)\geq X$;
\item [(b)] $x\equiv x'$ whenever $x,x'\in e_YL_{e_X}$, $e_Xx=e_Xx'$ and there exists $e\in E(e_YMe_Y)$ with $X<\sigma(e)$, $e\neq e_Y$ and $ex=x, ex'=x'$.
\end{itemize}
The $\equiv$-class of $x$ will be denoted by $[x]$.

Let $V_{X,Y}$ be the subspace of $k[e_YL_{e_X}/{\equiv}]$ with basis all differences $[x]-[e_Xx]$ such that $x\in e_YL_{e_X}\setminus G_X$.  Then $V_{X,Y}$ is a $G_Y\times G_X$-module via the action \[(g,h)([x]-[e_Xx]) =[gxh\inv]-[ge_Xxh\inv].\]
The number of arrows from $U$ to $V$ is the multiplicity of $V\otimes_k U^*$ as an irreducible constituent of $V_{X,Y}$.
\item If $Y<X$, then the number of arrows from $U$ to $V$ coincides with the number of arrows from $V$ to $U$ in $kM^{op}$ and so can be computed as in the previous case.
\end{enumerate}
\end{Thm}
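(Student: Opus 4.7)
\emph{Plan.} The approach is to combine Corollary~\ref{derivationcomputation} with Theorem~\ref{duh}, reducing the question to left regular bands of groups where Theorem~\ref{rrbgmain} already gives an answer, and then translate the resulting data back to $M$ via the quotient $\pi\colon M\to M_\ell$.

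For part (1), when $X,Y$ are incomparable, the vanishing is already recorded in the text, citing results of~\cite{rrbg}. When $X=Y$, Theorem~\ref{duh} applied with $X\leq Y$ identifies the arrow count with that in $Q(kM_\ell)$; since $X\not< Y$, Theorem~\ref{rrbgmain} yields no arrows. For part (3), the case $Y<X$, apply (2) to the opposite monoid $M^{op}$, which is again an orthogroup (with the same support lattice and opposite maximal subgroup actions), and use that $Q(kM^{op})=Q(kM)^{\mathrm{op}}$.

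The heart of the proof is part (2). Assume $X<Y$ with $e_X<e_Y$. By Theorem~\ref{duh}, the number of arrows from $U$ to $V$ in $Q(kM)$ equals the number in $Q(kM_\ell)$, where $M_\ell$ is the maximal left regular band of groups image of $M$. Apply Theorem~\ref{rrbgmain} to $M_\ell$: writing bars for images under $\pi$, this yields a formula in terms of the $\eL$-class $\bar L_{\bar{e_X}}$ of $\bar{e_X}$ in $\bar N=\bar{e_Y}\bar M_{\bar X}$, a congruence $\sim$, and a $G_Y\times G_X$-bimodule. What remains is to rewrite these LRBG-level data in terms of $M$: the set $e_YL_{e_X}\subseteq M$ surjects via $\pi$ onto $\bar L_{\bar{e_X}}$, and the central claim is that the kernel of this surjection is precisely the congruence generated by relation (a). Given this, relation (b) of the statement translates verbatim into the $\sim$ of Theorem~\ref{rrbgmain}, and the bimodule $V_{X,Y}$ of the statement matches the analogous bimodule over $M_\ell$ with the same $G_Y\times G_X$-action; the arrow count is then the multiplicity of $V\otimes_k U^*$ in $V_{X,Y}$.

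\emph{Main obstacle.} The hardest step is establishing that relation (a) exactly generates the kernel of $\pi|_{e_YL_{e_X}}$. This amounts to analyzing the rectangular-band-of-groups structure on the $\J$-class of $e_X$ and tracking how the defining identifications of $M_\ell$ --- collapse of two-element right-zero subsemigroups of idempotents --- propagate through products of the form $e_Ymx$ with $x\in L_{e_X}$ and $\sigma(m)\geq X$. It should suffice to show that any such product is equivalent, via relation (a), to one in which $m$ has been replaced by $me_Y$, since this is the archetypal right-zero identification lifted to $M$.
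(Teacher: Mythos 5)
Your route -- reduce to $M_\ell$ via Theorem~\ref{duh} and then quote Theorem~\ref{rrbgmain} -- is not the one the paper takes (the paper derives Theorem~\ref{orthogroupquiver} directly from Theorem~\ref{thebigtheorem}, by using regularity to show $e_YMe_X\setminus e_YL_{e_X}\subseteq \nup Y\nup X$ and then matching the generators of the congruence $\sim$ of Definition~\ref{definesim} with relations (a) and (b)); the paper explicitly flags your reduction as possible but deliberately avoids it. As a strategy it is legitimate, but as written it has a genuine gap at exactly the step you call the ``main obstacle.'' You need both inclusions between $\ker(\pi|_{e_YL_{e_X}})$ and the equivalence generated by (a), and your sketch only addresses the easy one: showing that $e_Ymx$ and $e_Yme_Yx$ have the same image in $M_\ell$ is a short computation in a left regular band of groups (there $nf=n$ whenever $f\in E$ and $\sigma(f)\geq\sigma(n)$). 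The substantive direction is the converse: if $\pi(x)=\pi(x')$ for $x,x'\in e_YL_{e_X}$, then $x$ and $x'$ are joined by a chain of elementary collapses $uev\to ufv$ where $\{e,f\}$ is a two-element right zero subsemigroup lying in an arbitrary $\J$-class of $M$ and the intermediate terms need not stay inside $e_YL_{e_X}$; converting such a chain into applications of (a) is a real semigroup-theoretic argument (essentially a normal-form analysis of the congruence generated by right-zero collapses on the Rees coordinates of $J_X$) and nothing in your plan supplies it.

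There is a second, related gap in the claim that relation (b) ``translates verbatim.'' The relation $\sim$ of Theorem~\ref{rrbgmain}, applied to $M_\ell$, requires an idempotent $\bar n\in E(\bar e_Y M_\ell\bar e_Y)\setminus\{\bar e_Y\}$ with $\bar n\bar x=\bar x$ and $\bar n\bar x'=\bar x'$. Pulling this back to $M$ only gives a lift $e\in E(e_YMe_Y)$ with $\pi(ex)=\pi(x)$ and $\pi(ex')=\pi(x')$, not $ex=x$ and $ex'=x'$ on the nose as condition (b) of the statement demands; so a priori the pulled-back relation is coarser than the one generated by (a) and (b), and you must show the two generate the same equivalence. (You also use without proof that $\pi$ maps $e_YL_{e_X}$ onto the set $L_X$ of Theorem~\ref{rrbgmain} computed in $\bar N=\bar e_Y(M_\ell)_{\bar X}\bar e_Y$, and that the resulting $G_Y\times G_X$-module structures agree.) By contrast, the paper's proof sidesteps all of this: for the incomparable and equal cases it observes directly that regularity forces $e_YMe_X\subseteq\nup Y\nup X$, respectively $I_X\subseteq(\nup X)^2$, so $V_{X,Y}=0$; and for $X<Y$ it checks, purely inside $M$, that (a) is equivalent to item (1) of Definition~\ref{definesim} and that (b) is equivalent to item (2) (the nontrivial direction using $e=(e_Yme_Y)^\omega$ and the orthogroup identity $x^\omega x=x$), after which Proposition~\ref{comparablecase} and Theorem~\ref{thebigtheorem} finish the job. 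If you want to keep your reduction-to-$M_\ell$ architecture, you must supply the missing congruence and lifting arguments; otherwise the cleaner path is the paper's direct specialization.
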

precisely
When $M$ is a regular band of groups, one has in the case $e_X<e_Y$ that \[e_Ymx=e_Ymxe_Y=e_Yme_Yxe_Y=e_Yme_Yx\] for $x\in L_{e_X}$.  Thus one can remove (a) from the definition of $\equiv$ without changing the equivalence relation so obtained.  It is then almost immediate to translate Theorem~\ref{orthogroupquiver} into Theorem~\ref{rrbgmain} and its dual.  Note that if $X<Y$, then $e_YL_{e_X}=\AIrr_{\mathscr K(M)}(e_X,e_Y)\setminus \Irrm_{\mathscr K(M)}(e_X,e_Y)$.

Let us specialize the above result to bands. In this case, all the maximal subgroups are trivial and so we obtain:

\begin{Cor}\label{bandquiver}
Suppose that $M$ is a band and $k$ is a field. Then the vertex set of $Q(kM)$ is the support lattice $\Lambda(M)$.  Let $e_X,e_Y$ be generators of $X,Y\in \Lambda(M)$.  Then the following hold:
\begin{enumerate}
 \item If $X,Y$ are incomparable or $X=Y$, there are no arrows from $X$ to $Y$ in $Q(kM)$.
 \item If $X<Y$, then without loss of generality we may suppose $e_X < e_Y$.  Let
 $\equiv$ be the smallest equivalence relation on $e_YL_{e_X}$ such that the following hold:
 \begin{itemize}
\item [(a)] $e_Ymx\equiv e_Yme_Yx$ for $x\in L_{e_X}$ and $\sigma(m)\geq X$;
\item [(b)] $x\equiv x'$ whenever $x,x'\in e_YL_{e_X}$ are such that there exists $e\in e_YMe_Y\setminus \{e_Y\}$ with $X<\sigma(e)$ and $ex=x, ex'=x'$.
\end{itemize}
The number of arrows from $X$ to $Y$ is $|e_YL_{e_X}/{\equiv}|-1$.
\item If $Y<X$, then the number of arrows from $X$ to $Y$ coincides with the number of arrows from $Y$ to $X$ in $Q(kM^{op})$ and so can be computed via the above.
\end{enumerate}
\end{Cor}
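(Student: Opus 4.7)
My plan is to derive Corollary~\ref{bandquiver} as a specialization of Theorem~\ref{orthogroupquiver}. A band is an orthogroup: every element is idempotent, so $x^{\omega}x=x^2=x$ and $E(M)=M$ is closed under multiplication. Moreover, for every $X\in \Lambda(M)$ the maximal subgroup is trivial, $G_X=\{e_X\}$, so each $G_X$ admits a unique (trivial one-dimensional) irreducible module. Hence the vertex set of $Q(kM)$ is canonically identified with $\Lambda(M)$. Part (1) then falls out of Theorem~\ref{orthogroupquiver}(1), and part (3) follows from part (2) by passing to the opposite monoid (which is again a band) and invoking the obvious identification $\Lambda(M)=\Lambda(M^{op})$ together with the fact that $Q(kM^{op})$ is the opposite quiver of $Q(kM)$.

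For part (2), fix $X<Y$ with $e_X<e_Y$ and apply Theorem~\ref{orthogroupquiver}(2). Since both groups are trivial, $V\otimes_k U^*$ is the trivial $G_Y\times G_X$-module, so the number of arrows from $X$ to $Y$ equals $\dim_k V_{X,Y}$. The key observation that collapses the theorem's description to the corollary's is the following simple lemma: \emph{in a band, $e_Xx=e_X$ for every $x\in L_{e_X}$.} Indeed, $Mx=Me_X$ yields $e_X=ux$ for some $u\in M$, and then $e_Xx=(ux)x=ux^2=ux=e_X$ by idempotency of $x$. This lemma has two consequences. First, the extra hypothesis ``$e_Xx=e_Xx'$'' in clause (b) of Theorem~\ref{orthogroupquiver}'s equivalence is automatically satisfied since $e_Xx=e_X=e_Xx'$ for all $x,x'\in e_YL_{e_X}\subseteq L_{e_X}$. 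Second, because every element of a band is idempotent, the condition ``$e\in E(e_YMe_Y)$'' in the theorem coincides with ``$e\in e_YMe_Y$'' in the corollary. Hence the two equivalence relations agree, and I will denote the common relation by $\equiv$.

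Using the lemma once more, $V_{X,Y}$ is the subspace of $k[e_YL_{e_X}/{\equiv}]$ spanned by the vectors $[x]-[e_Xx]=[x]-[e_X]$ as $x$ ranges over $e_YL_{e_X}\setminus\{e_X\}$. The distinct nonzero such vectors are in bijection with the $\equiv$-classes different from $[e_X]$, and they are linearly independent because they are differences of pairwise distinct basis vectors of $k[e_YL_{e_X}/{\equiv}]$. Therefore
\[\dim_k V_{X,Y}=|e_YL_{e_X}/{\equiv}|-1,\]
which is exactly the formula claimed.

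The only real obstacle in this argument is verifying that the two equivalence relations agree, and this is handled cleanly by the key lemma $e_Xx=e_X$, which is the fundamental feature distinguishing bands from general orthogroups. Everything else is bookkeeping once Theorem~\ref{orthogroupquiver} is assumed.
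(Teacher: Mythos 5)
Your proposal is correct and follows the paper's own route: the corollary is obtained by specializing Theorem~\ref{orthogroupquiver} to the case where all maximal subgroups are trivial, and your key lemma ($e_Xx=e_X$ for $x\in L_{e_X}$ in a band, so the condition $e_Xx=e_Xx'$ in clause (b) is vacuous and $\dim V_{X,Y}=|e_YL_{e_X}/{\equiv}|-1$) is exactly the bookkeeping the paper leaves implicit. No gaps.
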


One can verify, as above, that if $M$ is a regular band, then one can drop (a) from the definition of $\equiv$ and the corollary reduces to Theorem~\ref{saliolamain} and its dual.

Numerous examples of quivers of left regular bands are computed in~\cite{Saliola}.  The authors carry out in~\cite{rrbg} a number of computations of quivers of left regular bands of groups, including the quiver of Hsiao's semigroup~\cite{Hsiao}.  In the previous section, we computed the quivers of (relatively) free bands. Thus we content ourselves with one last example of the quiver of a band.

\begin{Example}
Let $X$ be the semigroup with underlying set $\{a,b\}\times \{1,2\}$ and multiplication \[(x,y)(z,w) = (x,w).\]  It is an example of what is called a rectangular band.  Let $M=\{1,e\}\cup X$ where $1$ is the identity, $e$ is an idempotent and $e(x,y)=(a,y)$, $(x,y)e=(x,y)$ for $(x,y)\in X$.  Then $M$ is a band and $X$ is the minimal ideal of $M$.  It is easy to verify that $M$ is a regular band, but is neither left nor right regular. Sometimes $M\setminus \{1\}$ is called in the literature the \emph{singular square} semigroup.   We shall prove that $kM$ is a hereditary algebra isomorphic to the algebra of $3\times 3$ upper triangular matrices over $k$.

First we have that $\Lambda(M)=\{X,Y,Z\}$ with $Z=M$ and $Y=MeM$. The order is given by $Z>Y>X$. Take $e_X=(a,1)$.  So $L_{e_X} = \{(a,1),(b,1)\}$.  Thus $eL_{e_X}=\{(a,1)\}$ and so there are no arrows from $X$ to $Y$.

Next, we compute the number of arrows from $X$ to $Z$. The equivalence relation $\equiv$ identifies no elements of $L_{e_X}$ in this case and so there is one arrow from $X$ to $Z$.

There are no arrows between $Y$ and $Z$ since these ideals are generated by singleton conjugacy classes of idempotents.  There is one arrow from $Y$ to $X$.  This is because $R_{e_X}e=\{(a,1),(a,2)\}$ and the equivalence relation $\equiv$ identifies no elements.  Thus the quiver $Q(kM)$ is the Dynkin diagram $A_3$:
\[\xymatrix{Y\ar[r]&X\ar[r]&Z}.\]
The path algebra of this quiver is well known to be isomorphic to the algebra of $3\times 3$ upper triangular matrices over $k$.  The dimension of $kQ(M)$ is then $6$, which is exactly the dimension of $kM$.  Since $kM$ is a split basic algebra, it follows from Gabriel's theorem that $kM\cong kQ(M)$, as desired.

This example can be generalized as follows.  One can replace $\{a,b\}$ by any set $A$ containing $a$ and $\{1,2\}$ by any set containing $1$.  One then replaces $X$ by $A\times B$ where $A$ is given the left zero multiplication and $B$ the right zero multiplication to obtain a monoid $M(A;B)=\{1,e\}\cup X$.  The same computation as above shows that there are $|A|-1$ arrows from $X$ to $Z$ and $|B|-1$ arrows from $Y$ to $X$.  The path algebra then has dimension \[3+|A|-1+|B|-1+(|A|-1)(|B|-1) = 2+|A||B|=|M(A;B)|.\]  Thus $kM(A;B)\cong kQ(M(A;B))$ and hence is hereditary.  The monoids $M(A;B)$ are examples of projective finite monoids~\cite{qtheor}.  The authors can prove that all projective finite monoids have hereditary algebras (unpublished).
\end{Example}

\subsection{$\J$-trivial monoids and $\pv {DG}$}
Recall that a monoid $M$ is $\J$-trivial, meaning that Green's $\J$-relation is trivial, if $MmM=MnM$ implies $m=n$. The class of $\J$-trivial monoids plays a fundamental role in automata and language theory. The key reason is that ideals in the subword order in a free monoid are recognized by $\J$-trivial monoids and generate the variety of languages recognized by $\J$-trivial monoids~\cite{lothaire,SimonJtriv,Eilenberg,Pinbook}. Also, the class of $\J$-trivial monoids is generated as a variety of finite monoids by partially ordered monoids in which the identity is the smallest element~\cite{StraubingTherien}.  For example, the $0$-Hecke monoid $M(W)$ is partially ordered by the strong Bruhat order with the identity the smallest element~\cite{Norton,Fayers}.  The representation theory of $\J$-trivial monoids was intensively studied in~\cite{Jtrivialpaper}.  See also~\cite{Kiselmanstuff} where an important special case is considered.

A rectangular monoid belongs to the class $\pv{DG}$ if each conjugacy class of idempotents is a singleton. This is equivalent to saying that every regular $\D$-class is a group and this explains the notation $\pv{DG}$. Clearly, each $\J$-trivial monoid belongs to $\pv {DG}$.  We first compute the quiver of a monoid in $\pv{DG}$.  When we specialize to $\J$-trivial monoids, we recover the result of~\cite{Jtrivialpaper} on quivers of $\J$-trivial monoids, although our techniques are completely different and apply to the class of all rectangular monoids.

It is easily verified that $M$ belongs to $\pv{DG}$ if and only if $(xy)^{\omega}=(yx)^{\omega}$ for all $x,y\in M$~\cite{Almeida:book}.  It follows that the class $\pv{DG}$ is closed under direct product, submonoids and homomorphic images.  Clifford monoids belong to $\pv{DG}$, as do monoids with central idempotents. It is known that if $G$ is a finite group and $P(G)$ is the monoid of all subsets of $G$ under set multiplication, then $P(G) \in \pv{DG}$ if and only if every subgroup of $G$ is normal. For example, if $Q$ is the group of quaternions, then the power set monoid $P(Q)$ belongs to $\pv{DG}$.  We shall see that the representation theory of EI-categories~\cite{Luckbook,Webb,WebbEI} also reduces to the representation theory of monoids in $\pv{DG}$.

Let $M$ be a monoid.  If $m\in M$, let
\begin{align*}
\St_L(m)&=\{n \in M|nm=m\} \\
\St_R(m)&=\{n \in M|mn=m\}
\end{align*}
denote the left and right stabilizers of $m$, respectively. Both of these are submonoids of $M$. In particular, if $M\in \pv{DG}$, then they belong to $\pv{DG}$ and hence each have a unique idempotent in their minimal ideal, which we denote by $e_L(m)$ and $e_R(m)$, respectively.  Of course, $m\in e_L(m)Me_R(m)$.

We now describe the irreducible elements of the Karoubi envelope of a monoid in $\pv{DG}$.

\begin{Lemma}\label{cirreducible}
Let $M\in \pv{DG}$ and let $X,Y\in \Lambda(M)$.  As usual, $e_X,e_Y$ denote idempotent generators of $X,Y$, respectively. Then $m\in \Irrm_{\mathscr K(M)}(e_X,e_Y)$ if and only if:
\begin{enumerate}
\item $e_X=e_R(m)$;
\item $e_Y=e_L(m)$;
\item $m\notin \nup Y\nup X$;
\item $m$ is a null element.
\end{enumerate}
\end{Lemma}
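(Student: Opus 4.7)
The plan is to leverage Proposition~\ref{simplifyirred}, which identifies $\Irrm_{\mathscr K(M)}(e_X,e_Y)$ with the set of null elements of $e_YMe_X\setminus \nup Y\nup X$. In this reformulation, conditions (3) and (4) are automatic. The $(\Leftarrow)$ direction is then immediate: (1) and (2) force $me_X=m$ and $e_Ym=m$, so $m\in e_YMe_X$, and (3), (4) put $m$ in $\Irrm_{\mathscr K(M)}(e_X,e_Y)$ by Proposition~\ref{simplifyirred}. The substance is therefore the $(\Rightarrow)$ direction: assuming $m$ is null, $m\in e_YMe_X$, and $m\notin \nup Y\nup X$, I need to deduce $e_R(m)=e_X$; then $e_L(m)=e_Y$ follows dually by the same argument applied to $\St_L(m)$.

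Set $f=e_R(m)$. Since $\pv{DG}$ is closed under submonoids, $\St_R(m)\in \pv{DG}$; being finite, its minimal ideal is a single regular $\J$-class, and in $\pv{DG}$ every regular $\J$-class is a group, so this minimal ideal is a group $G$ with identity $f$. The relation $me_X=m$ puts $e_X\in \St_R(m)$, so $fe_X,e_Xf\in G$. Being elements of a finite group with identity $f$, some power $(e_Xf)^n$ equals $f$; rewriting this as $e_X\cdot [f(e_Xf)^{n-1}]=f$ places $f\in e_XM$, and hence $e_Xf=f$ by idempotence of $e_X$. Dually $fe_X=f$. Thus $f\le e_X$ in the natural partial order on idempotents.

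Now suppose $f\ne e_X$ and derive a contradiction. Since $f\le e_X$ gives $e_Xfe_X=f$, the conjugacy criterion of Proposition~\ref{DOcharacterization} shows $f$ and $e_X$ are not conjugate, so $MfM\subsetneq Me_XM=X$ and $f\in \nup X$. On the other hand, $m\in e_YM$ yields $MmM\subseteq Y$; if equality held, then by stability of finite monoids $m\,\J\,e_Y$, which would force $m$ to be regular and contradict (4). Hence $MmM\subsetneq Y$ and $m\in\nup Y$. But then the factorization $m=m\cdot f$ exhibits $m$ as an element of $\nup Y\nup X$, contradicting (3). So $f=e_X$, as required.

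The main obstacle is the step $f\le e_X$: it depends essentially on the minimal ideal of $\St_R(m)$ being a \emph{group}, rather than merely a completely simple semigroup as in a general rectangular monoid. This uniqueness of the minimal idempotent of $\St_R(m)$ is precisely what the $\pv{DG}$ hypothesis supplies beyond rectangularity, and it is what lets the left- and right-stabilizer idempotents serve as canonical coordinates for null irreducible morphisms.
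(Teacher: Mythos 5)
Your proof is correct and follows essentially the same route as the paper's: reduce via Proposition~\ref{simplifyirred} to showing that a stabilizer idempotent different from $e_X$ (resp.\ $e_Y$) yields the forbidden factorization $m=m\cdot e_R(m)\in \nup Y\nup X$ (the paper argues the dual $e_L$ side and cites duality). Your intermediate step establishing $e_R(m)\leq e_X$ via the group structure of the minimal ideal of $\St_R(m)$ is a correct but unnecessary detour, since $\sigma(e_R(m))<X$ already follows from $e_R(m)\in \St_R(m)e_X\St_R(m)\subseteq Me_XM$ together with the fact that conjugacy classes of idempotents in $\pv{DG}$ are singletons.
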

\begin{proof}
Sufficiency is immediate from Proposition~\ref{simplifyirred}.  For necessity, the fact that $m\in \Irrm_{\mathscr K(M)}(e_X,e_Y)$ implies that $m\in \Null(e_YMe_X\setminus \nup Y\nup X)$ by Proposition~\ref{simplifyirred}.  Thus $\sigma(m)<X$ and $\sigma(m)<Y$ because $m$ is null.  Suppose first that $e_Y\neq e_L(m)$.  As $e_Y\in \St_L(m)$, it follows $\sigma(e_L(m))< Y$. Thus $m=e_L(m)m\in \nup Y\nup X$, a contradiction.  We conclude $e_Y=e_L(m)$.  The case $e_X\neq e_R(m)$ is dual.
\end{proof}

For a monoid $M$ in $\pv{DG}$, the set $\Lambda(M)$ is in canonical bijection with $E(M)$, which allows us to ease the notation.

\begin{Thm}\label{quiverDG}
Let $M\in \pv{DG}$ and suppose that $k$ is a splitting field for each maximal subgroup of $M$ and that the characteristic of $k$ divides the order of no maximal subgroup of $M$. The vertex set of the quiver $Q(kM)$ is the disjoint union $\coprod_{e\in E(M)} \Irr(G_e)$.  If $U\in \Irr(G_e)$, $V\in \Irr(G_f)$, then the number of arrows from $U$ to $V$ in the quiver of $kM$ is the multiplicity of $V\otimes_k U^*$ as an irreducible constituent of the $G_f\times G_e$-module $k[\Irrm_{\mathscr K(M)}(e,f)]$.
\end{Thm}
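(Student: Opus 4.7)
By Corollary~\ref{derivationcomputation}, the number of arrows from $U$ to $V$ equals $\dim H^1(M,\Hom_k(U,V))$, and $\Hom_k(U,V)\cong V\otimes_k U^*$ as a $G_f\times G_e$-module. Since $k[G_f\times G_e]$ is semisimple under the hypothesis on the characteristic, the multiplicity of $V\otimes_k U^*$ as an irreducible constituent of $k[\Irrm_{\mathscr K(M)}(e,f)]$ equals $\dim\Hom_{k[G_f\times G_e]}(k[\Irrm_{\mathscr K(M)}(e,f)],\Hom_k(U,V))$, so the theorem is equivalent to producing a natural isomorphism
\[H^1(M,\Hom_k(U,V))\;\cong\;\Hom_{k[G_f\times G_e]}\bigl(k[\Irrm_{\mathscr K(M)}(e,f)],\,\Hom_k(U,V)\bigr).\]
The forward map will send the class of a derivation $d$, after choosing a normalized representative, to the restriction $d|_{\Irrm_{\mathscr K(M)}(e,f)}$; the inverse will extend a given equivariant map by zero off $\Irrm_{\mathscr K(M)}(e,f)$.

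Set $A=\Hom_k(U,V)$ and note $f\phi=\phi=\phi e$ for every $\phi\in A$, so $m\in M$ annihilates $A$ from the left (respectively right) whenever $m\notin M_{MfM}$ (respectively $m\notin M_{MeM}$). The first step is to shrink the support of $d$: after subtracting $d_{d(f)}$ (and its right-sided mirror) one may assume $d$ vanishes outside $fMe$, and for any factorization $m=ab\in\nup{MfM}\cdot\nup{MeM}$ the derivation identity $d(m)=\rho_f(a)d(b)+d(a)\rho_e(b)$ forces $d(m)=0$ since both retractions vanish. By Proposition~\ref{simplifyirred}, $d$ is then supported on $\AIrr_{\mathscr K(M)}(e,f)$. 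By Remark~\ref{casesofAIrr}, the excess $\AIrr_{\mathscr K(M)}(e,f)\setminus\Irrm_{\mathscr K(M)}(e,f)$ in the $\pv{DG}$ setting is parametrized by part of $G_e$ (when $MeM\subseteq MfM$) or $G_f$ (when $MfM\subseteq MeM$), so killing this excess reduces to normalizing $d$ to vanish on every maximal subgroup of $M$. Once that is done, the identity $d(gmh^{-1})=g\,d(m)\,h^{-1}$ for $g\in G_f$, $h\in G_e$, $m\in\Irrm_{\mathscr K(M)}(e,f)$ follows directly from the derivation axiom together with $d(g)=d(h^{-1})=0$, making $d|_{\Irrm_{\mathscr K(M)}(e,f)}$ a $G_f\times G_e$-equivariant map.

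The main obstacle is this global normalization: for each $e'\in E(M)$, Proposition~\ref{grouphochiszero} produces some $a_{e'}\in A$ with $d|_{G_{e'}}=d_{a_{e'}}|_{G_{e'}}$, but the challenge is to assemble these local adjustments into a single inner derivation $d_a$. My approach is to pick $a$ normalizing $d$ on $G_e$ and $G_f$ simultaneously---possible because the left $G_f$-action and right $G_e^{op}$-action on $A$ commute, making $A$ a $k[G_f\times G_e^{op}]$-module whose first cohomology vanishes by Proposition~\ref{grouphochiszero}---and then to use the $\pv{LI}$-morphism $\sigma_*$ (Proposition~\ref{isLImap}), the block decomposition of $k\Lambda_*(M)$ (Proposition~\ref{isomap}), and the factoring of $\rho_e,\rho_f$ through $\Lambda_*(M)$ to argue that the same $a$ automatically kills $d|_{G_{e'}}$ for every other $e'\in E(M)$. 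For the inverse construction, given an equivariant $\psi\colon k[\Irrm_{\mathscr K(M)}(e,f)]\to A$, define $d$ by $d(m)=\psi(m)$ on $\Irrm_{\mathscr K(M)}(e,f)$ and zero elsewhere; the derivation identity $d(m_1m_2)=\rho_f(m_1)d(m_2)+d(m_1)\rho_e(m_2)$ is then verified by case analysis using Lemma~\ref{cirreducible} and Proposition~\ref{simplifyirred}, with the only nontrivial case (when $m_1m_2\in\Irrm_{\mathscr K(M)}(e,f)$) handled by the $G_f\times G_e$-equivariance of $\psi$ applied to the forced group-theoretic factorization of $m_1m_2$. These two constructions are mutually inverse modulo inner derivations, completing the identification.
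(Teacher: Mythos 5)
Your reduction of the arrow count to an isomorphism $H^1(M,\Hom_k(U,V))\cong\Hom_{k[G_f\times G_e]}\bigl(k[\Irrm_{\mathscr K(M)}(e,f)],\Hom_k(U,V)\bigr)$ is the right target, and your observation that inner derivations vanish on $\Irrm_{\mathscr K(M)}(e,f)$ is correct (irreducible morphisms are null, hence killed by both $\rho_e$ and $\rho_f$). But both of your key constructions fail, for the same underlying reason: the values of a derivation on $\Irrm_{\mathscr K(M)}(e,f)$ are coupled by the derivation identity to its values on idempotents, so you cannot force $d$ to vanish off $\Irrm_{\mathscr K(M)}(e,f)$. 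Concretely, let $M$ be the $0$-Hecke monoid of $S_3$ with generators $s,t$, and take $e=t$, $f=s$, $U=V=k$. Then $sMt=\{st,sts\}$ and $\Irrm_{\mathscr K(M)}(t,s)=\{st\}$. Since $\rho_s(t)=0$ and $\rho_t(s)=0$, every derivation satisfies $d(st)=sd(t)+d(s)t=d(t)+d(s)$, while every inner derivation has $d_a(s)=a$, $d_a(t)=-a$, $d_a(st)=0$. Hence $d(st)=d(s)+d(t)$ is an invariant of the outer class, and it is nonzero on the one-dimensional space of outer derivations; your proposed normalization killing $d$ on both $G_s=\{s\}$ and $G_t=\{t\}$ simultaneously would force $d(st)=0$ and thus annihilates every outer derivation. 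The same example kills your inverse construction: extending a nonzero equivariant $\psi\colon k[\{st\}]\to k$ by zero gives $d(st)=\psi(st)\neq 0$ but $sd(t)+d(s)t=0$, so the extension by zero is not a derivation. (The appeal to $H^1(G_f\times G_e^{op},A)=0$ is also a non sequitur: $G_f\times G_e^{op}$ is not a submonoid of $M$, so $d$ does not restrict to a derivation of that group.)

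The paper avoids this by never attempting to make $d$ vanish off the irreducible morphisms. It first constructs, for each pair $X,Y\in\Lambda(M)$ and in three separate cases (incomparable, equal, comparable), a quotient module $V_{X,Y}$ of $k[e_YMe_X]$ by relations that Proposition~\ref{trivderprops} forces on every derivation, and reconstructs $d$ from an equivariant $\psi$ by formulas such as $d(m)=\psi([e_Yme_X])-m\psi([e_Ye_X])$ in the incomparable case (Proposition~\ref{incomparablecase}) or $d(m)=\psi([e_Yme_X]-[e_Xme_X])$ when $X<Y$ (Proposition~\ref{comparablecase}); the correction terms in these formulas are exactly what extension by zero omits. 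Theorem~\ref{quiverDG} is then deduced by checking that for $M\in\pv{DG}$ the defining relations of $V_{X,Y}$ collapse (using that $e_Ym=e_Yme_Y$ whenever $\sigma(m)\geq Y$), so that $V_{X,Y}\cong k[\Null(e_YMe_X)\setminus\nup Y\nup X]=k[\Irrm_{\mathscr K(M)}(e_X,e_Y)]$. A direct proof along your lines is possible, but only if you replace ``extend by zero'' by one of these reconstruction formulas and abandon the global normalization on maximal subgroups.
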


In the case of a $\J$-trivial monoid $M$, the maximal subgroups are trivial and so the vertex set of $Q(kM)$ is $E(M)$ and the number of arrows from $e$ to $f$ is $|\Irrm_{\mathscr K(M)}(e,f)|$.  Since no two idempotents of $M$ are conjugate, Proposition~\ref{Dequiv} yields that that no two objects of $\mathscr K(M)$ are isomorphic.  Thus Theorem~\ref{quiverDG} reduces to the following result.

\begin{Cor}\label{Jtrivialquiver}
Let $M$ be a $\J$-trivial monoid and $k$ a field.  Then one has $Q(kM)=Q(\mathscr K(M))$.  That is, the vertex set of $Q(kM)$ is $E(M)$ and the set of arrows from $e$ to $f$ is $\Irrm_{\mathscr K(M)}(e,f)$.
\end{Cor}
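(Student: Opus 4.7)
The plan is to obtain Corollary \ref{Jtrivialquiver} as a direct specialization of Theorem \ref{quiverDG}. First I would verify the two hypotheses needed to apply that theorem. Any $\J$-trivial monoid $M$ lies in $\pv{DG}$: distinct idempotents $e,f$ generate distinct principal ideals by $\J$-triviality, so every conjugacy class of idempotents (which, by Proposition \ref{Dequiv}, consists of idempotents generating a common two-sided ideal) is a singleton. Moreover, each maximal subgroup $G_e$ is a $\J$-trivial group and hence trivial, so $k$ is automatically a splitting field for every maximal subgroup and the characteristic hypothesis of Theorem \ref{quiverDG} is vacuous.

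Next I would unpack the indexing of the quiver. Since each $G_e = \{e\}$, the set $\Irr(G_e)$ has a single element (the trivial representation), so the vertex set $\coprod_{e\in E(M)}\Irr(G_e)$ of $Q(kM)$ becomes naturally identified with $E(M)$. For the arrows, fix $e,f \in E(M)$ and take $U, V$ to be the trivial representations of $G_e, G_f$. Then $G_f \times G_e$ is the trivial group, $V \otimes_k U^* \cong k$ is its trivial representation, and $k[\Irrm_{\mathscr K(M)}(e,f)]$ is simply a vector space on which the trivial group acts trivially. Consequently the multiplicity of $V \otimes_k U^*$ as an irreducible constituent equals $\dim_k k[\Irrm_{\mathscr K(M)}(e,f)] = |\Irrm_{\mathscr K(M)}(e,f)|$, which therefore equals the number of arrows from $e$ to $f$ in $Q(kM)$.

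Finally, to identify this with $Q(\mathscr K(M))$, I would invoke Proposition \ref{Dequiv} once more: in a $\J$-trivial monoid, $MeM = MfM$ forces $e = f$, so no two distinct idempotents are isomorphic as objects of $\mathscr K(M)$. Hence the vertex set of $Q(\mathscr K(M))$ is literally $E(M)$, and by the definition of the quiver of a small category, the arrows from $e$ to $f$ form exactly the set $\Irrm_{\mathscr K(M)}(e,f)$.

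There is no real obstacle to this argument beyond bookkeeping: the substantive content lives entirely in Theorem \ref{quiverDG} and in the characterization of irreducible morphisms in $\mathscr K(M)$ (Lemma \ref{cirreducible} and Proposition \ref{simplifyirred}). The corollary is a transparent specialization once one observes that both the maximal subgroups and the conjugacy classes of idempotents collapse to singletons in the $\J$-trivial case.
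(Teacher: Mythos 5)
Your proposal is correct and follows essentially the same route as the paper: specialize Theorem~\ref{quiverDG} using the fact that a $\J$-trivial monoid lies in $\pv{DG}$ with trivial maximal subgroups, so the multiplicity count reduces to $|\Irrm_{\mathscr K(M)}(e,f)|$, and then identify the result with $Q(\mathscr K(M))$ via Proposition~\ref{Dequiv} since no two idempotents are conjugate. Nothing is missing.
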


We remark that if $M$ is $\J$-trivial, then Lemma~\ref{cirreducible} shows that $m\in\Irrm_{\mathscr K(M)}(e,f)$ if and only if $m$ is $c$-irreducible in the sense of~\cite{Jtrivialpaper} and satisfies $e_R(m)=e$ and $e_L(m)=f$.  Thus we have recovered the result of~\cite{Jtrivialpaper} on quivers of $\J$-trivial monoids.  Detailed examples of quivers of $\J$-trivial monoids are given in~\cite{Jtrivialpaper}.

\begin{Example}
Let $G$ be a group and $X$ a $G\times G$-set.  Assume $k$ is a splitting field for $G$ and the characteristic of $k$ does not divide the order of $G$.  Define a monoid $M=G\cup X\cup \{z\}$ where $z$ is a multiplicative zero, $X^2=\{z\}$ and $G$ is the group of units.  If $g\in G$ and $x\in X$, one has $gx=(g,1)x$ and $xg=(1,g\inv)x$.  Then $M$ belongs to $\pv{DG}$ as its idempotents are just $1$ and $z$, which are central.  One easily verifies that $X=\Irrm_{\mathscr K(M)}(1,1)$ and that there are no other irreducible morphisms in $\mathscr K(M)$.

Let $V_1,\ldots, V_s$ be the simple $kG$-modules.  Then the vertex set of $Q(kM)$ consists of $V_1,\ldots, V_s$ together with the trivial $kM$-module $k$.  The vertex $k$ is isolated.  The number of arrows from $V_i$ to $V_j$ is the multiplicity of $V_j\otimes_k V_i^*$ as an irreducible constituent of the $G\times G$-module $kX$.  Thus computing the quiver of $kM$ is equivalent to decomposing $kX$ into simple $G\times G$-modules.

For concreteness, let us take $X=G$ via the usual action $(h,k)g= hgk\inv$.  Then the stabilizer of $1$ is the diagonal subgroup $\Delta(G)$.  Thus Frobenius reciprocity shows that the multiplicity of $V_j\otimes_k V_i^*$ in the $G\times G$-module $kG$ is the same as the multiplicity of the trivial representation of $G$ in $V_j\otimes_k V_i^*$ (viewed as a $G$-module in the usual way).  But $V_j\otimes_k V_i^*\cong \Hom_k(V_i,V_j)$ where $G$ acts on $f\colon V_i\to V_j$ by $(gf)(v) = gf(g\inv v)$ for $g\in G$ and $v\in V$.  The subspace of $G$-invariants is exactly $\Hom_{kG}(V_i,V_j)$.  Thus, by Schur's lemma and the fact that $k$ is a splitting field, we conclude that the multiplicity is $1$ if $i=j$ and is $0$, otherwise.  In other words, there is a loop at each $V_i\in \Irr(G)$ and no other arrows.

On the other hand, if we take $X=G\times G$, then $kX$ is the regular $G\times G$-module and so the multiplicity of $V_j\otimes_k V_i^*$ in $kX$ is $\dim V_j\cdot \dim V_i$.  Thus there are $\dim V_j\cdot \dim V_i$ arrows from $V_i$ to $V_j$ for all $i,j$.
\end{Example}

\subsubsection{EI-categories}
We show next that the representation theory of finite EI-categories is subsumed by the representation theory of monoids in $\pv{DG}$.   The representation theory of categories is a classical subject~\cite{ringoids,Webb} that encompasses the theory of quiver representations.  A (finite dimensional) representation of a category $\mathscr C$ over a field $k$ is a functor from $\mathscr C$ to the category of (finite dimensional) $k$-vector spaces.  For instance, if $Q$ is a quiver, then a representation of $Q$, cf.~\cite{assem}, is precisely a representation of the free category on $Q$ (see~\cite{Mac-CWM} for the definition of free category).

From now on suppose that $\mathscr C$ is a finite category.  We shall denote by $\mathscr C^0, \mathscr C^1$ the sets of objects and arrows (or morphisms) of $\mathscr C$, respectively.  Given a field $k$, one can associate to $\mathscr C$ the \emph{category algebra} $k\mathscr C$ defined as follows~\cite{ringoids,Webb}.  One takes as a basis the set of arrows of $\mathscr C$ and extends the composition in $\mathscr C$ linearly, where undefined products are taken as $0$.  This algebra is unital with identity $\sum_{c\in \mathscr C^0}1_c$.  Moreover, the category of (finitely generated) $k\mathscr C$-modules is equivalent to the category of (finite dimensional) representations of $\mathscr C$~\cite{ringoids,Webb}.  Furthermore, if $\mathscr C$ and $\mathscr D$ are equivalent categories, then $k\mathscr C$ and $k\mathscr D$ are Morita equivalent~\cite{ringoids,Webb}.  Thus we may assume for the purposes of representation theory that the category $\mathscr C$ is \emph{skeletal}, meaning that distinct objects of $\mathscr C$ are not isomorphic.

An \emph{EI-category} is a category $\mathscr C$ in which every endomorphism is an isomorphism, that is, each endomorphism monoid $\mathscr C(c,c)$ is a group.  In the special case that each endomorphism monoid is trivial, we shall call $\mathscr C$ \emph{locally trivial}. For example, the free category on an acyclic quiver is a locally trivial category.  If $P$ is a poset, then there is an associated category $\mathscr P$ with $\mathscr P^0=P$ and with exactly one arrow from $p$ to $q$ if $p\leq q$, and no arrows otherwise.  This category is clearly locally trivial.  The category algebra of $\mathscr P$ is then the incidence algebra~\cite{Stanley} of the poset $P$ when $P$ is finite.

As another example, let $\mathscr I_n$ be the category whose objects are the sets $[m]=\{1,\ldots, m\}$ with $0\leq m\leq n$ (so $[0]=\emptyset$) and whose arrows are the one-to-one mappings.  Then $\mathscr I_n$ is a skeletal EI-category, the endomorphism monoid of $[m]$ being the symmetric group $S_m$. EI-categories play a role in algebraic $K$-theory~\cite{Luckbook} and have received quite some attention in the literature, see~\cite{Webb,WebbEI} and the references therein.

In an EI-category $\mathscr C$, every split monomorphism and split epimorphism is an isomorphism.  Thus a morphism $f\colon c\to c'$ is irreducible if and only if it is not an isomorphism and $f=gh$ implies $g$ or $h$ is an isomorphism.

To any finite category $\mathscr C$, one can associated a finite semigroup $S(\mathscr C)=\mathscr C^1\cup \{z\}$ where $z$ is a multiplicative zero and one extends the product in $\mathscr C$ by making all undefined products equal to $z$.  Observe that $k\mathscr C=kS(\mathscr C)/kz$ and hence, since $z$ is a central idempotent, $kS(\mathscr C)\cong k\mathscr C\times k$ (and so in particular is unital).  Also, since $k\mathscr C$ is unital, if $M(\mathscr C)$ is the monoid obtained by adjoining an identity $1$ to $S(\mathscr C)$, then $kM(\mathscr C)\cong k\mathscr C\times k\times k$.  Thus the simple modules for $k\mathscr C$ are obtained from those of $M(\mathscr C)$ by throwing away the simple modules associated to $z$ and $1$ and the quiver of $k\mathscr C$ is obtained from that of $kM(\mathscr C)$, by removing the vertices corresponding to these simple modules (which are isolated).  We shall refer to $z,1$ as the trivial idempotents of $M(\mathscr C)$.

Notice that $\mathscr C$ is isomorphic to the subcategory of $\mathscr K(M(\mathscr C))$ with objects the identities $1_c$ with $c\in \mathscr C^0$ and morphism sets $\mathscr K(M(\mathscr C))(1_c,1_d)\setminus \{z\}$.  Since $z$ is a regular element, it follows that $\Irrm_{\mathscr K(M(\mathscr C))}(1_c,1_d) = \Irrm_{\mathscr C}(c,d)$ by Proposition~\ref{simplifyirred}.

Suppose now that $\mathscr C$ is a skeletal EI-category.  Then the non-trivial idempotents of $M(\mathscr C)$ are precisely the identities $1_c$ with $c\in \mathscr C^0$.  Thus $\mathscr C$ is isomorphic to the subcategory obtained from $\mathscr K(M(\mathscr C))$ by removing the objects $1,z$ and all arrows $z\in \mathscr K(M(\mathscr C))(1_c,1_d)$.  In particular, $Q(\mathscr C)$ is the subquiver of $Q(\mathscr K(M(\mathscr C))$ obtained by removing the objects $1,z$ (which are isolated).  The fact that $\mathscr C$ is skeletal then implies that $\mathscr K(M(\mathscr C))$ is skeletal and hence $M(\mathscr C)$ belongs to $\pv{DG}$.  The maximal subgroup at $1_c$ is just the group $\mathscr C(c,c)$.

We can now describe the quiver of $k\mathscr C$ for a finite EI-category $\mathscr C$.  The result is immediate from Theorem~\ref{quiverDG} and the above discussion.

\begin{Thm}\label{EIquiver}
Let $\mathscr C$ be a finite skeletal EI-category and $k$ a field.  Assume that, for each $c\in \mathscr C^0$, the characteristic of $k$ does not divide $|\mathscr C(c,c)|$ and that $k$ is a splitting field for $\mathscr C(c,c)$.  Then the following hold:
\begin{itemize}
\item the vertex set of $Q(k\mathscr C)$ is $\coprod_{c\in \mathscr C^0}\Irr(\mathscr C(c,c))$;
\item if $U\in \Irr(\mathscr C(c,c))$ and $V\in \Irr(\mathscr C(c',c'))$, then the number of arrows from $U$ to $V$ is the multiplicity of $V\otimes_k U^*$ as an irreducible constituent in the $\mathscr C(c',c')\times \mathscr C(c,c)$-module $k[\Irrm_{\mathscr C}(c,c')]$.
\end{itemize}
\end{Thm}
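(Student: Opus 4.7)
The plan is to deduce Theorem~\ref{EIquiver} directly from Theorem~\ref{quiverDG} applied to the monoid $M(\mathscr C)$, using the structural observations already collected in the paragraphs preceding the statement. Since $\mathscr C$ is a finite skeletal EI-category, the discussion there shows that $M(\mathscr C)$ lies in $\pv{DG}$, that its non-trivial idempotents are exactly the identities $1_c$ for $c\in \mathscr C^0$, and that the maximal subgroup at $1_c$ is the endomorphism group $\mathscr C(c,c)$. Hence the hypothesis on $k$ (splitting field and characteristic coprime to $|\mathscr C(c,c)|$) is exactly the hypothesis required to invoke Theorem~\ref{quiverDG} for $M(\mathscr C)$.

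First I would record the algebra identification $kM(\mathscr C)\cong k\mathscr C\times k\times k$, where the two extra $k$-factors correspond to the adjoined identity $1$ and the zero $z$ of $M(\mathscr C)$; this follows from the fact that $z$ is a central idempotent and $1$ is the external identity. Since a product decomposition of algebras induces a disjoint union decomposition of quivers, the quiver $Q(kM(\mathscr C))$ is the disjoint union of $Q(k\mathscr C)$ with two isolated vertices, one for each of the one-dimensional simple modules associated to $1$ and $z$. Consequently, extracting $Q(k\mathscr C)$ from $Q(kM(\mathscr C))$ amounts to discarding those two isolated vertices, and the simple $kM(\mathscr C)$-modules associated (via the Munn--Ponizovsky theory encoded in Corollary~\ref{reptheoryofHsiaosgp}) to the idempotents $1_c$ are precisely the simple $k\mathscr C$-modules, so their set is $\coprod_{c\in \mathscr C^0}\Irr(\mathscr C(c,c))$, as claimed.

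Next I would compute the arrows. For idempotents $1_c$ and $1_{c'}$ in $M(\mathscr C)$, Theorem~\ref{quiverDG} says that the number of arrows from $U\in \Irr(\mathscr C(c,c))$ to $V\in \Irr(\mathscr C(c',c'))$ in $Q(kM(\mathscr C))$ equals the multiplicity of $V\otimes_k U^*$ as an irreducible constituent of the $\mathscr C(c',c')\times \mathscr C(c,c)$-module $k[\Irrm_{\mathscr K(M(\mathscr C))}(1_c,1_{c'})]$. The identification of $\mathscr C$ with the full subcategory of $\mathscr K(M(\mathscr C))$ on the objects $\{1_c\mid c\in \mathscr C^0\}$ with the zero morphisms removed, combined with the fact that $z$ is regular (so does not contribute to irreducibles by Proposition~\ref{simplifyirred}), gives $\Irrm_{\mathscr K(M(\mathscr C))}(1_c,1_{c'})=\Irrm_{\mathscr C}(c,c')$. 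Substituting this into Theorem~\ref{quiverDG} yields exactly the stated arrow count, and since the passage $Q(kM(\mathscr C))\rightsquigarrow Q(k\mathscr C)$ only removes the two isolated vertices attached to $1$ and $z$, the arrow counts between simples of $k\mathscr C$ are preserved.

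There is no real obstacle: the bulk of the work has been front-loaded into Theorem~\ref{quiverDG}, Proposition~\ref{simplifyirred}, and the elementary observations about $M(\mathscr C)$ that immediately precede the theorem statement. The only point that requires minor care is to verify cleanly that the product decomposition $kM(\mathscr C)\cong k\mathscr C\times k\times k$ is compatible with the identification of simple modules and arrow counts between quivers — in particular, that the two factors of $k$ really do contribute only isolated vertices, which follows because $1$ and $z$ are central idempotents of $M(\mathscr C)$ whose conjugacy classes are singletons distinct from the $1_c$, and no irreducible morphism of $\mathscr K(M(\mathscr C))$ has $1$ or $z$ as its domain or codomain once $z$ is excluded as a regular element.
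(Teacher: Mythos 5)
Your proposal is correct and follows exactly the route the paper takes: the paper's proof consists of the single remark that the result ``is immediate from Theorem~\ref{quiverDG} and the above discussion,'' and your write-up simply makes explicit the same ingredients (membership of $M(\mathscr C)$ in $\pv{DG}$, the identification of maximal subgroups and of irreducible morphisms via Proposition~\ref{simplifyirred}, and the decomposition $kM(\mathscr C)\cong k\mathscr C\times k\times k$ contributing only isolated vertices). No gaps; this is a faithful expansion of the paper's argument.
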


Most likely, Theorem~\ref{EIquiver} is known in some form since one can compute the primitive idempotents of $k\mathscr C$ from the primitive idempotents for the group algebras $k\mathscr C(c,c)$.

Notice that if $\mathscr C$ is a skeletal locally trivial category, then $M(\mathscr C)$ is $\J$-trivial and so Theorem~\ref{EIquiver} admits the following simplification.

\begin{Thm}
Let $\mathscr C$ be a finite skeletal locally trivial category and $k$ a field.  Then $Q(k\mathscr C)=Q(\mathscr C)$. That is, $Q(k\mathscr C)$ has vertex set $\mathscr C^0$ and arrow set $\Irrm_{\mathscr C}(c,d)$ from $c$ to $d$.
\end{Thm}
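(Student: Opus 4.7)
The plan is to derive the result immediately from Corollary~\ref{Jtrivialquiver}, applied to the monoid $M(\mathscr C)$ constructed in the discussion preceding Theorem~\ref{EIquiver}, after verifying that $M(\mathscr C)$ is $\J$-trivial.

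First I would check that $M(\mathscr C)$ is $\J$-trivial. Since $\mathscr C$ is skeletal and locally trivial, every endomorphism monoid $\mathscr C(c,c)$ is the trivial group and no two distinct objects are isomorphic. Therefore, by the general analysis recorded just before Theorem~\ref{EIquiver}, $\mathscr K(M(\mathscr C))$ is skeletal and each maximal subgroup is trivial; equivalently, $M(\mathscr C)$ belongs to $\pv{DG}$ with trivial maximal subgroups, which is precisely the definition of a $\J$-trivial monoid. (Alternatively one applies Proposition~\ref{Dequiv}: two idempotents of $M(\mathscr C)$ are conjugate iff the corresponding objects of $\mathscr C$ are isomorphic, hence iff they coincide, and the only non-idempotent elements of $M(\mathscr C)$ are the non-identity arrows of $\mathscr C$ together with the adjoined $1$, and none of these generate the same principal ideal.)

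Next, applying Corollary~\ref{Jtrivialquiver} to $M(\mathscr C)$ gives $Q(kM(\mathscr C))=Q(\mathscr K(M(\mathscr C)))$: the vertex set is $E(M(\mathscr C))$ and the arrows from $e$ to $f$ are the irreducible morphisms in $\mathscr K(M(\mathscr C))$. From the preceding discussion we know that $E(M(\mathscr C))=\{1,z\}\cup\{1_c\mid c\in \mathscr C^0\}$, that $\mathscr C$ is isomorphic to the full subcategory of $\mathscr K(M(\mathscr C))$ on the objects $\{1_c\}_{c\in \mathscr C^0}$ once the absorbing morphism $z$ is discarded, and that $\Irrm_{\mathscr K(M(\mathscr C))}(1_c,1_d)=\Irrm_{\mathscr C}(c,d)$ by Proposition~\ref{simplifyirred} (using that $z$ is a regular element).

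Finally, I would remove the two trivial idempotents. The isomorphism $kM(\mathscr C)\cong k\mathscr C\times k\times k$ shows that the simple modules attached to the idempotents $1$ and $z$ are isolated vertices of $Q(kM(\mathscr C))$, and the remaining simples biject with the objects of $\mathscr C$. Deleting these two isolated vertices therefore yields $Q(k\mathscr C)$, whose vertex set is $\mathscr C^0$ and whose arrows from $c$ to $d$ are exactly $\Irrm_{\mathscr C}(c,d)$. This completes the proof. The argument is essentially bookkeeping; the only point requiring any care is the verification that $M(\mathscr C)$ is $\J$-trivial, but this follows directly from skeletality together with local triviality.
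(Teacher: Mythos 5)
Your argument is correct and is essentially the paper's: the paper obtains this theorem as the trivial-maximal-subgroup specialization of Theorem~\ref{EIquiver}, which is itself Theorem~\ref{quiverDG} applied to $M(\mathscr C)$ together with the same bookkeeping about deleting the isolated vertices at the idempotents $1$ and $z$, so routing through Corollary~\ref{Jtrivialquiver} merely performs the two specializations in the opposite order. One small caution: membership in $\pv{DG}$ with trivial maximal subgroups is an equivalent characterization of $\J$-triviality for finite monoids rather than ``precisely the definition'' (and your parenthetical miscounts the adjoined identity $1$ as a non-idempotent), but the key assertion that $M(\mathscr C)$ is $\J$-trivial is exactly the one the paper makes, and your direct verification via skeletality and local triviality is sound.
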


For example, if $Q$ is a finite acyclic quiver, then the irreducible morphisms of the free category on $Q$ are the arrows of $Q$ and so the quiver of $kQ$ is $Q$, as expected.  Similarly, if $P$ is a poset, then the irreducible morphisms of the associated category $\mathscr P$ are the arrows corresponding to covers and hence the quiver of the incidence algebra of $P$ is the Hasse diagram of the poset, as is well known.

\begin{Example}[Brimacombe]
In this example we compute the quiver of $\mathbb C\mathscr I_n$ using Theorem~\ref{EIquiver}.  This was originally computed by the second author's student, B.~Brimacombe, via direct computation with primitive idempotents as part of her Ph.~D.\ thesis (in progress).

One has $\mathscr I_n([m],[m])=S_m$ and so the vertices of $Q(\mathbb C\mathscr I_n)$ can be identified with Young diagrams with at most $n$ boxes.  It is easy to see that a morphism $f\colon [m]\to [k]$ is irreducible if and only if $k=m+1$ (and so $m<n$).  One can identify $\mathscr I_n([m],[m+1])$ with $S_{m+1}$ by associating an injective mapping $f\colon [m]\to [m+1]$ with its unique extension to a permutation of $[m+1]$.  Identifying $S_m$ with the permutations of $[m+1]$ that fix $m+1$, we see that $\mathrm{Irr}([m],[m+1])$ can be identified as an $S_{m+1}\times S_{m}$-set with $S_{m+1}$ equipped with the action $(g,h)k = gkh\inv$.  Notice that the stabilizer of the identity permutation is the diagonally embedded copy of $S_m$.  Thus if $S^{\lambda}\in \Irr(S_m)$ and $S^{\mu}\in \Irr(S_{m+1})$ are Specht modules associated to partitions $\lambda\vdash m$ and $\mu\vdash m+1$, then by Frobenius reciprocity, the multiplicity of $S^{\mu}\otimes_{\mathbb C} (S^{\lambda})^*\cong \Hom_{\mathbb C}(S^\lambda,S^\mu)$ in $\mathbb CS_{m+1}$ is the multiplicity of the trivial representation of $S_m$ in $\Hom_{\mathbb C}(S^\lambda,S^\mu)$ as an $S_{m+1}\times S_m$-module (where $S^\mu$ is viewed as an $S_m$-module via restriction). This in turn is the dimension of $\Hom_{\mathbb CS_m}(S^{\lambda},S^{\mu})$.  By the branching rule for symmetric groups~\cite[Theorem~2.4.3]{jameskerber}, this multiplicity is $1$ if $\lambda$ can be obtained from $\mu$ by removing a single box and $0$, otherwise.  Thus the quiver of $\mathbb C\mathscr I_n$ is the Hasse diagram of Young's lattice~\cite[Chapter 7]{Stanley2}, truncated after rank $n$.
\end{Example}

\subsection{$\R$-trivial monoids}
Recall that a monoid $M$ is \emph{$\R$-trivial} if $mM=nM$ implies $m=n$, that is, Green's relation $\R$ is trivial.  Notice that $\J$-trivial monoids are precisely the $\R$-trivial monoids whose conjugacy classes of idempotents are singletons, whereas left regular bands are the von Neumann regular $\R$-trivial monoids.  Schocker gave an alternative axiomatization of $\R$-trivial monoids in~\cite{SchockerRtrivial}; cf.~\cite{BergeronSaliola}.  A complete set of orthogonal primitive idempotents for the algebra of an $\R$-trivial monoid was computed in~\cite{BergeronSaliola}.  However, the quiver and Cartan matrix were not computed, nor was a semigroup theoretic model of the projective indecomposable modules provided (as was done for left regular bands in~\cite{Saliola} and $\J$-trivial monoids in~\cite{Jtrivialpaper}).  We do this here, independently of the results of~\cite{BergeronSaliola}.  Our proof does not even use primitive idempotents, and so is semigroup theoretic.

An important example of an $\R$-trivial monoid is the monoid $E_n$ of all mappings $f\colon \{1,\ldots, n\}\to \{1,\ldots,n\}$, acting on the \textbf{right}, satisfying $if\leq i$ for all $i\in \{1,\ldots,n\}$.  Such mappings are called \emph{extensive} in the French school of finite semigroup theory, whence the notation~\cite{Pinbook}.  The Cayley theorem for $\R$-trivial monoids says that if $M$ is a finite $\R$-trivial monoid with $n$ elements, then $M$ embeds in $E_n$~\cite[Chapter 4]{Pinbook}.

The class of $\R$-trivial monoids is closed under semidirect product.  It is an important and non-trivial fact that the class of finite $\R$-trivial monoids is the smallest class of monoids containing the two-element lattice $\pv 2$ and which is closed under taking semidirect products, submonoids and homomorphic images~\cite{Eilenberg,qtheor,Stiffler}.

We now describe the quiver of an $\R$-trivial monoid.
%

\begin{Thm}\label{Rtrivialquiver}
Let $M$ be an $\R$-trivial monoid and $k$ a field.  The vertex set of $Q(kM)$ is $\Lambda(M)$.  Let $X,Y\in \Lambda(M)$ and let $e_X,e_Y$ be idempotent generators of $X$ and $Y$, respectively.  Then the number of arrows from $X$ to $Y$ in $Q(kM)$ is determined as follows.  Let $\equiv$ be the least equivalence relation on $e_YMe_X$ such that:
\begin{itemize}
\item [(a)]  $e_Ymne_X\equiv e_Yme_X$ whenever $\sigma(m)\ngeq Y$ and $\sigma(n)\geq X$;
\item [(b)] $z\equiv z'$ for all $z,z'\in \nup Y\nup X$.
\end{itemize}
Let $Z_{X,Y}$ be the complement of the class of $\nup Y\nup X$ in $e_YMe_X/{\equiv}$
\begin{enumerate}
\item If $X$ and $Y$ are incomparable, then the number of arrows from $X$ to $Y$ is $|Z_{X,Y}|$.
\item In all other cases, there are $|Z_{X,Y}|-1$ arrows from $X$ to $Y$.
\end{enumerate}
\end{Thm}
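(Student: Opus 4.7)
The strategy is to compute $\Ext^{1}_{kM}(V_{X}, V_{Y})$ via Hochschild cohomology, exploiting the fact that $M$ is $\R$-trivial so every maximal subgroup is trivial.  By Corollary~\ref{derivationcomputation} and Corollary~\ref{reptheoryofHsiaosgp}, each simple $kM$-module $V_{X}$ is one-dimensional with $m$ acting by the scalar $\rho_{X}(m)\in\{0,1\}$, and the number of arrows from $X$ to $Y$ equals $\dim\Der(M,A_{X,Y})-\dim\IDer(M,A_{X,Y})$, where $A_{X,Y}=\Hom_{k}(V_{X},V_{Y})\cong k$ carries the bimodule action $m\cdot a\cdot n=\rho_{Y}(m)\rho_{X}(n)a$.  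A derivation is thus a function $d\colon M\to k$ satisfying $d(mn)=\rho_{Y}(m)d(n)+\rho_{X}(n)d(m)$.

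The crucial structural fact I will exploit is that in an $\R$-trivial monoid, $e_{Y}m=e_{Y}$ whenever $m\in M_{Y}$.  Indeed $\rho_{Y}(m)=e_{Y}me_{Y}$ lies in the trivial group $G_{Y}$, so $e_{Y}me_{Y}=e_{Y}$, giving $e_{Y}\in e_{Y}m\cdot M$ and therefore $e_{Y}m\cdot M=e_{Y}\cdot M$; $\R$-triviality then forces $e_{Y}m=e_{Y}$.  From this I will extract the following constraints on any derivation: (i) $d$ vanishes on $\nup Y\nup X$, since for $z_{1}\in\nup Y,\,z_{2}\in\nup X$ one has $\rho_{Y}(z_{1})=0=\rho_{X}(z_{2})$; (ii) $d(e_{Y}mne_{X})=d(e_{Y}me_{X})$ whenever $\sigma(m)\ngeq Y$ and $\sigma(n)\geq X$, which is relation~(a) of the theorem; and (iii) the reconstruction formula $d(m)=d(e_{Y}me_{X})-\rho_{Y}(m)d(e_{X})-\rho_{X}(m)d(e_{Y})$, obtained by expanding $d(e_{Y}\cdot m\cdot e_{X})$ twice.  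Hence $d$ is entirely determined by its restriction to $e_{Y}Me_{X}$ together with the two scalars $d(e_{X}),d(e_{Y})$, and this restriction factors through $e_{Y}Me_{X}/{\equiv}$ with value $0$ on the class of $\nup Y\nup X$.

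Conversely, given $F\colon e_{Y}Me_{X}/{\equiv}\to k$ with $F([\nup Y\nup X])=0$ and scalars $c_{1},c_{2}\in k$, I will set $d(m)=F([e_{Y}me_{X}])-\rho_{Y}(m)c_{1}-\rho_{X}(m)c_{2}$ and verify the derivation law by splitting $(\rho_{Y}(m),\rho_{X}(n))\in\{0,1\}^{2}$ into four cases.  The $(0,0)$ case is automatic because $e_{Y}mne_{X}=(e_{Y}m)(ne_{X})\in\nup Y\nup X$; the $(0,1)$ case is exactly relation~(a); the $(1,0)$ case is automatic since the key lemma gives $e_{Y}mne_{X}=e_{Y}ne_{X}$ literally; and the $(1,1)$ case, again using the key lemma to collapse $e_{Y}me_{X}$ to $e_{Y}e_{X}$, reduces to the single linear relation $c_{1}+c_{2}=F([e_{Y}e_{X}])$.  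This gives an affine parametrization of $\Der(M,A_{X,Y})$ by triples $(F,c_{1},c_{2})$ of total dimension $|Z_{X,Y}|+1$.

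The main obstacle is that the parametrization map $(F,c_{1},c_{2})\mapsto d$ is not injective, and a case analysis is needed to compute its kernel.  Specializing the infinitesimal identity $\delta F([e_{Y}me_{X}])=\rho_{Y}(m)\delta c_{1}+\rho_{X}(m)\delta c_{2}$ to $m=e_{X}$, $m=e_{Y}$, and general $m\in M_{Y}$ (where the key lemma collapses $e_{Y}me_{X}$ to $e_{Y}e_{X}$), one finds that the kernel has dimension $2$ when $X=Y$, dimension $1$ when $X,Y$ are distinct but comparable, and dimension $0$ when $X,Y$ are incomparable.  Combined with the elementary observation that $\dim\IDer(M,A_{X,Y})=0$ when $X=Y$ and $1$ otherwise (the inner derivation $d_{a}(m)=a(\rho_{Y}(m)-\rho_{X}(m))$ vanishes identically precisely when $M_{X}=M_{Y}$, i.e.\ when $X=Y$), the arithmetic $\dim\Ext^{1}=(|Z_{X,Y}|+1)-(\text{kernel dim})-\dim\IDer$ yields exactly $|Z_{X,Y}|$ in the incomparable case and $|Z_{X,Y}|-1$ in every comparable case (including $X=Y$), matching the theorem.
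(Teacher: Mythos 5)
Your proposal is correct, and it reaches the theorem by a genuinely more direct route than the paper does. The paper derives Theorem~\ref{Rtrivialquiver} from the general machinery of Section~7: it first constructs, for an arbitrary rectangular monoid, a $G_Y\times G_X^{op}$-module $V_{X,Y}$ representing the outer-derivation functor (with three separate constructions according to whether $X,Y$ are incomparable, equal, or strictly comparable, the last requiring both orders), and then, in the proof of Theorem~\ref{Rtrivialquiver}, verifies case by case that $\R$-triviality collapses the defining relations of each $V_{X,Y}$ onto the single relation $\equiv$ of the statement, so that $\dim V_{X,Y}$ is $|Z_{X,Y}|$ or $|Z_{X,Y}|-1$. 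You instead exploit from the outset that all maximal subgroups of an $\R$-trivial monoid are trivial, so every simple is one-dimensional and the coefficient bimodule is just $k$ with a $\{0,1\}$-valued action; this lets you parametrize all of $\Der(M,A_{X,Y})$ at once by triples $(F,c_1,c_2)$ subject to $c_1+c_2=F([e_Ye_X])$ and finish by linear algebra. I checked the details: the key lemma $e_Ym=e_Y$ for $\sigma(m)\geq Y$ is exactly where $\R$-triviality enters (it is what makes the $(1,0)$ and $(1,1)$ cases of the Leibniz identity degenerate), the four-case verification that the formula defines a derivation is sound, the kernel dimensions $2$, $1$, $0$ in the cases $X=Y$, comparable distinct, incomparable are right (in the $X=Y$ case this uses that $[e_X]$ is a singleton class, which holds since $e_Xme_X=e_X$ forces $\sigma(m)\geq X$), and $\dim\IDer(M,A_{X,Y})$ is $0$ or $1$ exactly as you say. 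What the paper's route buys is uniformity: the representing objects $V_{X,Y}$ work for all rectangular monoids with arbitrary maximal subgroups, and Theorem~\ref{Rtrivialquiver} falls out as a specialization alongside the band, orthogroup and $\pv{DG}$ cases. What your route buys is a short, self-contained argument that treats all comparability cases with one parametrization and never needs the representability statement; its only limitation is that it is confined to the situation where the simples are one-dimensional.
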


We remark, that one can replace (a) by the condition $e_Ymn\equiv e_Yme_X$ whenever $\sigma(m)\ngeq Y$ and $Mn=Me_X$ because $Mne_X=Me_X$ if and only if $\sigma(n)\geq X$.
Notice that the equivalence classes of $Z_{X,Y}$ are precisely the classes of elements $\AIrr_{\mathscr K(M)}(e_X,e_Y)$ which do not get identified with elements of $\nup Y\nup X$.

It is a routine, but tedious, exercise to deduce Theorem~\ref{saliolamain} and Corollary~\ref{Jtrivialquiver} from this result.

We have not yet been able to compute the quiver for $E_n$ with $n\geq 4$ because $|E_n|=n!$ and so performing hand computations is prohibitive.

\subsubsection{Projective indecomposable modules and Cartan invariants}
We end this section by giving a semigroup theoretic description of the projective indecomposable modules of the algebra of an $\R$-trivial monoid and computing the Cartan invariants. Our approach is inspired by that of~\cite{Jtrivialpaper} for $\J$-trivial monoids, however we avoid any use of primitive idempotents and so our proof can be viewed as semigroup theoretic.

We recall the notion of the $\til {\eL}$-relation and associated preorder on a monoid $M$~\cite{Ltilde}.  One defines $m\leq_{\til {\eL}}n$ if, for all $e\in E(M)$, one has \[ne=n\implies me=m.\]   For example, if $Mm\subseteq Mn$, then clearly $m\leq_{\til {\eL}}n$.   If $n$ is regular, then one easily verifies that the converse holds. Indeed, if $nn'n=n$, then $n'n\in E(M)$ and so $mn'n=m$.  Thus $Mm\subseteq Mn$. Thus when restricted to regular elements, $\til{\eL}$ coincides with the classical Green's $\eL$-relation. In particular, if $e,f$ are idempotents then $e\mathrel{\til {\eL}} f$ if and only if $ef=e$ and $fe=f$.

Notice that $mn\leq_{\til {\eL}}n$ for all $m,n\in M$.   Let us write $m\mathrel{\til{\eL}} n$ if $m\leq_{\til {\eL}}n$ and $n\leq_{\til {\eL}}m$.  Set $\til L_m$ to be the $\til{\eL}$-equivalence class of $M$.

\begin{Prop}\label{stabilizerstuff}
If $M$ is a finite $\R$-trivial monoid, then the idempotents of $\til L_m$ are the elements of the minimal ideal of $\St_R(m)$, which moreover is an $\eL$-class of $M$.
\end{Prop}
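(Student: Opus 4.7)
The plan is to first observe that, as a submonoid of the $\R$-trivial monoid $M$, the right stabilizer $\St_R(m)$ is itself $\R$-trivial.  A standard consequence of the Rees--Suschkewitsch structure of the minimal ideal of a finite semigroup, combined with $\R$-triviality forcing each $\R$-class of that minimal ideal to be a singleton (and hence the structure group to be trivial and the number of $\eL$-classes to equal one), is that the minimal ideal $K$ of $\St_R(m)$ is a left zero semigroup.  In particular every element of $K$ is idempotent and $ef = e$ for all $e, f \in K$.

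Next I will prove $E(\til L_m) = K$.  For the inclusion $K \subseteq E(\til L_m)$, fix $e \in K$.  Since $e \in \St_R(m)$ one has $me = m$, so any idempotent $f$ with $ef = e$ gives $mf = mef = me = m$; thus $m \leq_{\til{\eL}} e$.  Conversely, if $f \in E(M)$ satisfies $mf = m$, then $f \in \St_R(m)$, so $ef \in K$ by the ideal property, and the left zero law in $K$ gives $e(ef) = e$, i.e.\ $ef = e$; this shows $e \leq_{\til{\eL}} m$.  For the reverse inclusion, suppose $e \in E(\til L_m)$; applying $m \leq_{\til{\eL}} e$ with the test idempotent $f = e$ yields $me = m$, so $e \in \St_R(m)$.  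Choosing any $f \in K$, the relation $e \leq_{\til{\eL}} m$ together with $mf = m$ forces $ef = e$, placing $e = ef \in \St_R(m) \cdot K \subseteq K$.

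Finally, to show $K$ is an entire $\eL$-class of $M$, note that any $e, f \in K$ satisfy $ef = e$ and $fe = f$, so $Me = Mf$; hence $K$ lies in a single $\eL$-class of $M$.  For the reverse containment, let $n \in M$ with $Mn = Me$ for some $e \in K$.  Since $Mn$ is generated by the idempotent $e$, $n$ is regular; $\R$-triviality of $M$ then forces any $t$ with $ntn = n$ to also satisfy $nt = n$ (because $n \R nt$), whence $n = ntn = n \cdot n = n^2$ and $n$ is an idempotent.  Writing $n = se$ and $e = tn$, the idempotency of $n$ yields $ne = se^2 = n$ and $en = tn^2 = e$.  Multiplying $m = me$ on the right by $n$ gives $mn = men = me = m$, so $n \in \St_R(m)$, and then $n = ne \in \St_R(m) \cdot e \subseteq K$, completing the proof.

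The main obstacle is the closing step: passing from $Mn = Me$ to $n \in K$ requires $\R$-triviality to be invoked twice --- once to promote $\eL$-equivalence with an idempotent into genuine idempotency of $n$, and again (through the left zero structure of $K$) to chain the resulting identities $ne = n$, $en = e$ back through the $\St_R(m)$ structure and thereby land $n$ inside the minimal ideal.
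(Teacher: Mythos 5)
Your proof is correct and follows essentially the same route as the paper: identify the minimal ideal $K$ of $\St_R(m)$ as a left zero band, show its elements are exactly the idempotents $\til{\eL}$-equivalent to $m$, and verify $K$ is a full $\eL$-class. The paper packages the middle step more compactly by proving $\St_R(m)=\St_R(e)$ and invoking the previously noted fact that $\til{\eL}$ coincides with $\eL$ on regular elements, whereas you unwind those facts by hand, but the substance is identical.
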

\begin{proof}
Let $e$ be an idempotent of the minimal ideal of $\St_R(m)$.  We claim $\St_R(m)=\St_R(e)$. Since $\St_R(m)$ is $\R$-trivial and $e$ belongs to its minimal ideal, it follows that $en=e$ for all $n\in \St_R(m)$ by stability (Proposition~\ref{stabilityprop}).  Conversely, if $n\in \St_R(e)$, then $mn=men=me=m$.  It follows that all elements of $\til L_m$ have the same right stabilizer.  It also follows that $e\mathrel{\til {\eL}} m$.  Since idempotents are $\til {\eL}$-equivalent if and only if they are $\eL$-equivalent, it follows that $E(\til L_m) = L_e$, which is moreover the minimal ideal of $\St_R(e)=\St_R(m)$.
\end{proof}

Fix now a finite $\R$-trivial monoid $M$ and an idempotent generator $e_X$ of each $X\in \Lambda(X)$.  Let $L_X$ be the $\eL$-class of $e_X$. Then $L_X=E(\til L_{e_X})$ and hence $\til L_{e_X}$ depends only on $X$; we therefore denote it by $\til L_X$.   Let us put
\begin{equation}\label{tilLnotation}
\til L(X)_<=ML_X\setminus \til L_X.
\end{equation}
Then $ML_X$ and $\til L(X)_<$ are left ideals of $M$.

We now prove the main result of this subsection, generalizing the results of Saliola for left regular bands~\cite{Saliola} and of~\cite{Jtrivialpaper} for $\J$-trivial monoids.

\begin{Thm}\label{projindec}
Let $M$ be an $\R$-trivial monoid, $k$ a field and $X\in \Lambda(M)$.  Denote by $S_X$ the simple module associated to $X$ and by $P_X$ its projective cover.  We retain the notation of \eqref{tilLnotation}.  Then $P_X\cong kML_X/k\til L(X)_<$.

More precisely, $P_X$ is isomorphic to the $kM$-module $k\til L_X$ with basis $\til L_X$ and with action defined on the basis by putting
\begin{equation}\label{schutzvar}
m\cdot n = \begin{cases} mn & \text{if}\ mn\in \til L_X\\ 0 & \text{else.}\end{cases}
\end{equation}
for $m\in M$ and $n\in \til L_X$.
\end{Thm}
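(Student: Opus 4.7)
The plan is to realize $P_X$ explicitly as $Q_X := kML_X/k\til L(X)_<$, establishing the module description in \eqref{schutzvar} and then identifying $Q_X$ with the projective cover of $S_X$ via a dimension argument.

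First I would verify the module description. Observe that $ML_X = Me_X$ because each $f \in L_X$ is $\eL$-equivalent to $e_X$, and $\til L_X \subseteq Me_X$ because by Proposition~\ref{stabilizerstuff} every $n \in \til L_X$ is right-stabilized by some idempotent in $L_X$, giving $n \in Me_X$. Thus $ML_X = \til L_X \sqcup \til L(X)_<$. The subset $\til L(X)_<$ is a left ideal: for $n \in \til L(X)_<$ and $m \in M$ one has $mn \leq_{\til{\eL}} n <_{\til{\eL}} e_X$, so $mn \notin \til L_X$. This shows $Q_X$ has basis $\til L_X$ with the action in \eqref{schutzvar} and is cyclic with generator $\overline{e_X}$. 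Moreover, Proposition~\ref{stabilizerstuff} implies that $\{\til L_X\}_{X \in \Lambda(M)}$ partitions $M$, so $\sum_X |\til L_X| = |M|$.

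Next, I would construct a surjection $\phi: Q_X \twoheadrightarrow S_X$ by sending a basis element $n \in \til L_X$ to $1$ if $n \in L_X$ and to $0$ otherwise, verifying $M$-equivariance case by case. If $n \in \til L_X \setminus L_X$, I claim $mn \notin L_X$: if $mn$ were in $L_X$, then since $L_X \subseteq \St_R(e_X)$ we would have $e_X(mn) = e_X$, and $n \leq_{\til{\eL}} e_X$ would give $n(mn) = n$, making $n$ von Neumann regular and hence (by $\R$-triviality) idempotent, so $n \in L_X$, a contradiction. If $n \in L_X$, the equivalence $mn \in L_X \iff \sigma(m) \geq X$ is clear in the forward direction; for the converse, write $e_X = amb$ and apply Proposition~\ref{DOcharacterization}(5) with $e = f = e_X$ to obtain $e_X m e_X n e_X = e_X m n e_X$. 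The left side collapses to $e_X$ because $e_X m e_X \in G_X = \{e_X\}$ (every maximal subgroup of an $\R$-trivial monoid is trivial) together with $e_X n = e_X$ and $n e_X = n$. Hence $e_X = e_X mn$, so $e_X \in M(mn)$, giving $mn \eL e_X$ and therefore $mn \in L_X$.

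The crux of the argument is to show that $S_X$ is the \emph{unique} simple quotient of $Q_X$. Suppose $\psi: Q_X \twoheadrightarrow S_Y$ with $\psi(\overline{e_X}) = v \neq 0$; since $e_X v = v$, we must have $\rho_Y(e_X) = 1$, forcing $Y \leq X$. Assume $Y < X$. By Proposition~\ref{definerestrict}, $e_Y e_X e_Y = \rho_{e_Y}(e_X) \in G_{e_Y} = \{e_Y\}$, so $e_Y e_X e_Y = e_Y$; then $(e_Y e_X)e_Y = e_Y$ gives $e_Y \in (e_Y e_X)M$, so $e_Y \mathrel{\R} (e_Y e_X)$, and $\R$-triviality forces $e_Y e_X = e_Y$. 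Therefore $e_Y \cdot \overline{e_X} = \overline{e_Y e_X} = \overline{e_Y} = 0$ in $Q_X$ (because $e_Y \in \til L_Y$ and, the $\til\eL$-classes being disjoint, $e_Y \notin \til L_X$). Yet $\psi(e_Y \overline{e_X}) = e_Y v = \rho_Y(e_Y) v = v \neq 0$, a contradiction. So $S_X$ is the only simple quotient of $Q_X$. Since $Q_X$ is cyclic its top is cyclic, and because $kM/\rad(kM) \cong \bigoplus_X S_X$ has each $S_X$ with multiplicity one for $\R$-trivial $M$, any cyclic semisimple $kM$-module is a direct sum of distinct simples; thus the top of $Q_X$ equals $S_X$. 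Hence $P_X$ is the projective cover of $Q_X$, giving a surjection $P_X \twoheadrightarrow Q_X$ and $\dim P_X \geq |\til L_X|$. Summing and using $\sum_X \dim P_X = \dim kM = |M| = \sum_X |\til L_X|$ forces equality throughout, so $P_X \cong Q_X$. The principal obstacle is the uniqueness step; this is where the combined strength of $\R$-triviality and the rectangular ($\pv{DO}$) property is indispensable, encoded in the identity $e_Y e_X = e_Y$ for $Y < X$.
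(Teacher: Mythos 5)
Your proposal is correct, and its skeleton coincides with the paper's: identify $kML_X/k\til L(X)_<$ with $k\til L_X$ carrying the action \eqref{schutzvar}, prove that the top of this module is $S_X$, and then conclude by the counting argument $\sum_X \dim P_X=\dim kM=|M|=\sum_X|\til L_X|$ using the partition of $M$ into the classes $\til L_X$. The one genuine divergence is in the central step. The paper exhibits the kernel of the obvious map $\theta\colon k\til L_X\to S_X$ explicitly inside $\rad(k\til L_X)$: the spanning elements $e-f$ (with $e,f\in L_X$) and $x$ (with $x\in \til L_X\setminus L_X$) are written as $(e-f)e_X$ and $(x-x^{\omega})e_X$, where $e-f$ and $x-x^{\omega}$ lie in $\rad(kM)$ because $\sigma$ identifies them. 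You instead show that $S_X$ is the \emph{unique} simple quotient of the cyclic module $Q_X$, ruling out $S_Y$ for $Y\nleq X$ by the action of $e_X$ and for $Y<X$ by the identity $e_Ye_X=e_Y$ (which you correctly derive from Proposition~\ref{definerestrict}, triviality of maximal subgroups, and $\R$-triviality) together with disjointness of the $\til{\eL}$-classes; combined with split-basicness this pins down the top. Both arguments are sound and of comparable length. The paper's route gives slightly more (it locates $\ker\theta$ inside the radical directly, without invoking the classification of simples as quotients of $kM/\rad(kM)$), while yours trades that radical computation for the combinatorial identity $e_Ye_X=e_Y$ and a structural uniqueness argument; your verification of $M$-equivariance of $\phi$ via Proposition~\ref{DOcharacterization}(5) is also more detailed than what the paper records. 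Both proofs remain primitive-idempotent-free, which is the point the authors emphasize.
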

\begin{proof}
It is clear that $kML_X/k\til L(X)_<\cong k\til L_X$.  The key step to prove the theorem is to establish that $k\til L_X/\rad(k\til L_X)\cong S_X$.  Let us assume this for the moment and complete the proof, which is a counting argument.  Indeed, it will then follow that $P_X$ is the projective cover of $k\til L_X$ and hence maps onto it~\cite{assem,benson}. In particular, $|\til L_X|=\dim k\til L_X\leq \dim P_X$.  But Proposition~\ref{stabilizerstuff} implies that $\{\til L_X\mid X\in \Lambda(X)\}$ is a partition of $M$.  Thus we have
\begin{equation*}
0\leq \sum_{X\in \Lambda(X)} (\dim P_X-|\til L_X|)=\dim kM-|M|=0.
\end{equation*}
and hence $\dim P_X=|\til L_X|$ for all $X\in \Lambda(X)$.  It follows that $P_X\cong k\til L_X$ for all $X$, as required.

There is clearly a surjective homomorphism $\theta\colon k\til L_X\to S_X\cong k$ sending each element of $L_X$ to $1$ and each element of $\til L_X\setminus L_X$ to $0$.  We need that $\ker\theta\subseteq \rad(k\til L_X)$. Note that $\ker \theta$ is spanned by the elements $e-f$ with $e,f\in L_X$ and by the elements $x\in \til L_X\setminus L_X$.  Now in the first case, $e-f\in \rad(kM)$ as $\sigma(e)=X=\sigma(f)$ and so $e-f=(e-f)e_X\in \rad(\til L_X)$.  If $x\in \til L_X\setminus L_X$, then since $xe_X=x$, we have $x^{\omega}e_X=x^{\omega}$.  But if $e_Xx^{\omega}=e_X$, then we would have $x^{\omega}\in L_X$ and hence $x\in L_X$.  We conclude that $x^{\omega}\notin \til L_X$.  Now $x-x^{\omega}\in \rad(kM)$ because $\sigma(x)=\sigma(x^{\omega})$.  In the module $k\til L_X$ (with action \eqref{schutzvar}) one has $(x-x^{\omega})e_X =x$ and so $x\in \rad(k\til L_x)$.  This completes the proof that $\ker \theta \subseteq \rad(k\til L_X)$ and so $k\til L_X/\rad(k\til L_X)\cong S_X$, thereby proving the theorem.
\end{proof}

The action in \eqref{schutzvar} is a variation on the classical Sch\"utzenberger representation~\cite{CP,qtheor,Arbib}, and has been considered before in the literature, eg.~\cite{Vickypaper}.

Next we compute the Cartan matrix.  We could use the results of~\cite{mobius2} to do this, but we instead give here a direct argument.

\begin{Thm}\label{Cartan}
Fix, for each $X\in \Lambda(M)$, an idempotent generator $e_X$.
Define, for $X,Y\in \Lambda(M)$, \[m_{X,Y} = |\{n\in \til L_Y\mid e_Xn=n\}|.\] Let $c_{X,Y} = \dim \Hom_{kM}(P_X,P_Y)$ so that the Cartan matrix is $C=(c_{X,Y})$.  Then \[c_{X,Y} = \sum_{Z\leq X} m_{Z,Y}\cdot \mu(Z,X)\] where $\mu$ is the M\"obius function of the lattice $\Lambda(M)$.
\end{Thm}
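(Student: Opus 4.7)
My plan is to introduce the auxiliary projective modules $kMe_X$, evaluate $\dim\Hom_{kM}(kMe_X,P_Y)$ in two different ways, and then Möbius-invert on the lattice $\Lambda(M)$. Since $e_X$ is idempotent, $kMe_X$ is a direct summand of the regular module $kM$ and hence projective, and evaluation at $e_X$ gives the standard isomorphism $\Hom_{kM}(kMe_X,N)\cong e_XN$ for every left $kM$-module $N$.

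The first evaluation is a trace computation. The operator by which $e_X$ acts on any module is idempotent, so its rank equals its trace. Working in the basis $\til L_Y$ of $P_Y$ with the action \eqref{schutzvar}, the diagonal entry of $e_X$ at $n\in\til L_Y$ is $1$ precisely when $e_Xn=n$ (in which case $e_Xn\in\til L_Y$ automatically) and $0$ otherwise, so the trace is exactly $m_{X,Y}$. This gives
\[\dim\Hom_{kM}(kMe_X,P_Y)=\dim e_XP_Y=m_{X,Y}.\]

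The second evaluation decomposes $kMe_X$ into projective indecomposables. Since the maximal subgroups of the $\R$-trivial monoid $M$ are all trivial, the criterion recalled after Corollary~\ref{reptheoryofHsiaosgp} shows that $kM$ is split basic over any field $k$. Writing $kMe_X\cong\bigoplus_Z P_Z^{n_Z}$ and pairing with the simples yields $n_Z=\dim\Hom_{kM}(kMe_X,S_Z)=\dim e_XS_Z$. The simple $S_Z$ is one-dimensional, and $e_X$ acts on it via $\rho_Z(e_X)$; by Proposition~\ref{definerestrict} this retraction lands in the trivial group $G_Z=\{e_Z\}$ whenever $Z\leq X$ (i.e., $e_X\in M_Z$) and vanishes otherwise. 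Hence $n_Z=[Z\leq X]$ and $kMe_X\cong\bigoplus_{Z\leq X}P_Z$, from which
\[\dim\Hom_{kM}(kMe_X,P_Y)=\sum_{Z\leq X}c_{Z,Y}.\]

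Equating the two evaluations gives $m_{X,Y}=\sum_{Z\leq X}c_{Z,Y}$, and Möbius inversion on the finite lattice $\Lambda(M)$ immediately produces the asserted formula for $c_{X,Y}$. The only non-formal ingredient is the multiplicity computation $n_Z=[Z\leq X]$, which combines the explicit description of the simple modules of a rectangular monoid with the split basic property specific to the $\R$-trivial case; once that is in hand, the rest is trace, Yoneda, and Möbius inversion.
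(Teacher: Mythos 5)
Your argument is correct, and it takes a genuinely different route from the paper's. The paper computes $c_{X,Y}$ as the multiplicity of $S_X$ in the character $\theta_Y$ of $P_Y$, which factors through $\sigma\colon M\to\Lambda(M)$, by evaluating $\theta_Y$ at the primitive idempotent $\eta_X=\sum_{Z\leq X}Z\cdot\mu(Z,X)$ of the semisimple algebra $k\Lambda(M)$; the fixed-point counts enter as $\theta_Y(e_Z)=m_{Z,Y}$, and the M\"obius formula falls out directly. You instead pair $P_Y$ against the projectives $kMe_X$, proving the un-inverted identity $m_{X,Y}=\sum_{Z\leq X}c_{Z,Y}$ first --- exactly the paper's equation \eqref{deriveJtrivialCartan}, which the paper only recovers afterwards by inverting --- and then apply M\"obius inversion. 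Your decomposition $kMe_X\cong\bigoplus_{Z\leq X}P_Z$ is correctly justified ($kM$ is split basic because the maximal subgroups of an $\R$-trivial monoid are trivial, and $\rho_Z(e_X)$ equals $e_Z$ or $0$ according to whether $Z\leq X$), and is a pleasant structural fact in its own right; both proofs lean equally on the model $P_Y\cong k\til L_Y$ from Theorem~\ref{projindec}. One small repair: ``the rank of an idempotent equals its trace'' fails in characteristic $p$ dividing the rank, so to get $\dim e_XP_Y=m_{X,Y}$ over an arbitrary field you should argue instead that $e_X$ acts on the basis $\til L_Y$ as an idempotent partial transformation whose image coincides with its fixed-point set (if $m=e_X\cdot n\neq 0$ then $e_X\cdot m=m$), so the fixed basis vectors span $e_XP_Y$ and the dimension count is an honest integer identity. (The paper's own appeal to $\theta_Y(\eta_X)$ as a multiplicity carries the same characteristic-$p$ sensitivity, so this is a shared wrinkle rather than a defect of your approach.)
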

\begin{proof}
It is well known~\cite{benson} that $\dim \Hom_{kM}(P_X,P_Y)$ is the multiplicity of the simple module $S_X=P_X/\rad(P_X)$ as a composition factor in $P_Y$.  The character $\theta_Y$ of $P_Y$ is the sum of the characters of its composition factors.  In particular, we conclude that $\theta_Y\colon M\to k$ factors through $\sigma\colon M\to \Lambda(M)$.  Also the multiplicity of $S_X$ in $P_Y$ is the multiplicity of $S_X$ in the (semisimple) representation of $\Lambda(M)$ with character $\theta_Y$.

It is well known, cf.~\cite{Stanley}, that the primitive idempotent of $k\Lambda(M)$ corresponding to $S_X$ is \[\eta_X = \sum_{Z\leq X}Z\cdot \mu(Z,X).\]   It follows from standard representation theory, cf.~\cite{curtis}, that the multiplicity of $S_X$ in $\theta_Y$ is given by $\theta_Y(\eta_X)$, where $\theta_Y$ is viewed as a character of $\Lambda(M)$.  As $\sigma(e_Z)=Z$ for $Z\in \Lambda(M)$, it follows that
\[c_{X,Y} = \sum_{Z\leq X}\theta_Y(e_Z)\cdot \mu(Z,X).\]  But the identification of $P_Y$ with $k\til L_Y$ in Theorem~\ref{projindec} implies that $\theta_Y(e_Z)$ is the number of fixed points of $e_Z$ for its action on $\til L_Y$ by partial transformations, which is none other than $m_{Z,Y}$. This establishes the theorem.
\end{proof}

It is immediate that Theorem~\ref{Cartan} reduces to the result of~\cite{Saliola} for left regular bands.  Let us show how it reduces to the result of~\cite{Jtrivialpaper} for $\J$-trivial monoids. So suppose that $M$ is $\J$-trivial.
By M\"obius inversion, it follows that the $c_{X,Y}$ are characterized by the equation
\begin{equation}\label{deriveJtrivialCartan}
 m_{X,Y} = \sum_{Z\leq X}c_{Z,Y}.
\end{equation}
One has that $n\in \til L_Y$ if and only if $e_R(n)=e_Y$  and that $e_Xn=n$ if and only if $e_X\geq e_L(n)$.  Thus
\begin{equation*}
 m_{X,Y} = \sum_{Z\leq X}\left|\{n\in M\mid e_L(n)=e_Z,\ e_R(n)=e_Y \}\right|.
\end{equation*}
Therefore, we recover the result of~\cite{Jtrivialpaper} \[c_{X,Y}=|\{n\in M\mid e_L(n)=e_X,e_R(n)=e_Y \}|.\]  Notice that our proof is primitive idempotent-free.

\section{The quiver of a rectangular monoid}
We give here a procedure to compute the quiver of the algebra $kM$ of a rectangular monoid $M$ when $M$ splits over $k$ and the characteristic of $k$ does not divide the order of any maximal subgroup of $M$.  Even if the characteristic is bad, we still reduce the computation of the quiver of $kM$ to the representation theory of groups, only in this case we are dealing with the modular representation theory of groups.

Our results generalize those of~\cite{Saliola,rrbg,Jtrivialpaper}.  We retain the notation of Section~\ref{complexalgebra}.  We assume that $k$ is a splitting field for the maximal subgroups of $M$, and hence for $M$.

For each $X\in \Lambda(M)$, by the previous sections, we have a homomorphism $\rho_X\colon \CCC M\to \CCC G_X$ given on $M$ by
\[\rho_X(m)=\begin{cases} e_Xme_X & \sigma(m)\geq X\\ 0 & \text{else.}\end{cases}\] This is a slight abuse of notation, but should cause no confusion.  If $U$ is a simple $\CCC G_X$-module and $V$ is a simple $\CCC G_Y$-module, then $\Hom_k(U,V)$ is in fact a $\CCC [G_Y\times G_X^{op}]$-module and the $M\times M^{op}$-module structure is via the homomorphism $\CCC [M\times M^{op}]\to \CCC [G_Y\times G_X^{op}]$ induced by $\rho_Y\times \rho_X^{op}$.  Thus, to compute $\Ext^1_{kM}(U,V)\cong \Der(M,\Hom_k(U,V))/\IDer(M,\Hom_k(U,V))$, it suffices to compute $\Der(M,A)/\IDer(M,A)$ for any $G_Y\times G_X^{op}$-module $A$, viewed as an $M$-bimodule.  We fix such $A$ for the remainder of the section.
Our method of computing $H^1(M,A)$ is as follows.

The construction $A\mapsto \Der(M,A)/\IDer(M,A)$ is a functor from the category $\module{k[G_Y\times G_X^{op}]}$ to $k$-vector spaces.  We show that this functor is representable, and explicitly construct the representing object, whenever $X\neq Y$ or the characteristic of $k$ does not divide $|G_Y|\cdot|G_X|$.  In other words, we construct a $\CCC [G_Y\times G_X^{op}]$-module $V_{X,Y}$, depending only on $X,Y$, such that $\Der(M,A)/\IDer(M,A)\cong \Hom_{\CCC [G_Y\times G_X^{op}]}(V_{X,Y},A)$ (and the isomorphism is natural in $A$, although we do not prove it).  Of course, a $G_Y\times G_X^{op}$-module is the same thing as a $G_Y\times G_X$-module and hence in characteristic zero one can immediately compute $\dim \Hom_{\CCC [G_Y\times G_X^{op}]}(V_{X,Y},A)$ as an inner product of characters.

If $U$ is a simple $\CCC G_X$-module and $V$ is a simple $\CCC G_Y$-module, then the $k[G_Y\times G_X]$-module corresponding to $\Hom_k(U,V)$ is $V\otimes_k U^*$, which is a simple $k[G_Y\times G_X]$-module.  Moreover, every simple $k[G_Y\times G_X]$-module is of this form for some $U$ and $V$.  Thus computing the quiver of $kM$, when the characteristic of $k$ does not divide the order of any maximal subgroup of $M$, is equivalent to decomposing the $k[G_Y\times G_X]$-modules $V_{X,Y}$ into irreducibles for each $X$ and $Y$. In principal, this can be done from the character tables of $G_X$ and $G_Y$, at least in characteristic zero.

The construction of $V_{X,Y}$ breaks into four cases: $X\nleq Y$ and $Y\nleq X$; $X=Y$; $X<Y$; and $Y<X$. The last two cases are dual (i.e., one can replace $M$ by $M^{op}$ and perform the computation) and so we only handle the first three cases.

There are, however, some properties of derivations that are valid regardless of how $X$ and $Y$ relate.  We include some of them in the following proposition.  Any derivation $d\colon M\to A$ extends linearly to a mapping $d\colon kM\to A$ satisfying the Leibniz rule $d(ab)=ad(b)+d(a)b$ for all $a,b\in kM$.

\begin{Prop}\label{trivderprops}
Let $d\colon M\to A$ be a derivation. Then:
\begin{enumerate}
\item\label{trivderprops.1} $d(xyz) = xyd(z)+xd(y)z+d(x)yz$;
\item\label{trivderprops.2} $d(m) = d(e_Yme_X)-md(e_X) -d(e_Y)m$;
\item\label{trivderprops.3} $d(e_Ye_X) = d(e_X)+d(e_Y)$;
\item\label{trivderprops.4} $d(z)=0$ for $z\in \nup Y\nup X$;
\item\label{trivderprops.5} $d(e_Ymne_X)+d(e_Yme_Ye_Xne_X)-d(e_Yme_Yne_X)-d(e_Yme_Xne_X)=0$ for all $m,n\in M$.
\end{enumerate}
\end{Prop}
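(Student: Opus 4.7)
The plan is to derive all five identities from the Leibniz rule (already extended to $kM$ in the paragraph preceding the statement) together with two observations about how $A$ is an $M$-bimodule. Throughout, write $ma$ and $am$ for the bimodule actions. These factor as $ma=\rho_Y(m)a$ and $am=a\rho_X(m)$, so the workhorses are: (i) since $\rho_Y(e_Y)=1_{G_Y}$ and $\rho_X(e_X)=1_{G_X}$, the idempotent $e_Y$ acts as the identity on $A$ from the left and $e_X$ from the right; in particular $\rho_Y(e_Yu)=\rho_Y(ue_Y)=\rho_Y(u)$ and dually for $\rho_X$; (ii) if $\sigma(u)\not\geq Y$ then $\rho_Y(u)=0$, so $ua=0$ for every $a\in A$, and dually $\sigma(u)\not\geq X$ forces $au=0$.

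Parts (1)--(4) are formal consequences. Part (1) follows by applying Leibniz to $x(yz)$ and then to $yz$. Part (3) is $d(e_Ye_X)=e_Yd(e_X)+d(e_Y)e_X=d(e_X)+d(e_Y)$ by (i). Part (2) comes from (1) with $(x,y,z)=(e_Y,m,e_X)$, using (i) to simplify $e_Yd(m)e_X=d(m)$, $e_Ymd(e_X)=md(e_X)$, and $d(e_Y)me_X=d(e_Y)m$, and then solving for $d(m)$. For (4), write $z=uv$ with $u\in\nup Y$ and $v\in\nup X$; by (ii), $ud(v)=0$ and $d(u)v=0$, so $d(z)=0$.

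The only part that needs an idea is (5), and the idea is to introduce the element
\[
\alpha \;=\; e_Ym(1-e_Y)(1-e_X)ne_X \;\in\; kM.
\]
Expanding the product gives
\[
\alpha \;=\; e_Ymne_X - e_Yme_Xne_X - e_Yme_Yne_X + e_Yme_Ye_Xne_X,
\]
so $d(\alpha)$ is, up to rearrangement, precisely the expression in (5); it thus suffices to prove $d(\alpha)=0$. Factor $\alpha=\xi\eta$ with $\xi=e_Ym(1-e_Y)$ and $\eta=(1-e_X)ne_X$, and apply Leibniz: $d(\alpha)=\xi d(\eta)+d(\xi)\eta$. By (i), $\rho_Y(\xi)=\rho_Y(e_Ym)-\rho_Y(e_Yme_Y)=\rho_Y(m)-\rho_Y(m)=0$, so the left action of $\xi$ on $A$ vanishes and the first summand is $0$; dually $\rho_X(\eta)=0$ and the second summand is $0$. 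Hence $d(\alpha)=0$.

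The main obstacle, and really the only non-bookkeeping step, is spotting that the four-term alternating combination in (5) is exactly the product $(1-e_Y)(1-e_X)$ wedged between $e_Ym$ and $ne_X$; once this is noticed, the complementary idempotents $1-e_Y$ and $1-e_X$ annihilate $A$ on the correct sides and Leibniz closes the argument. The remaining identities are straightforward unfoldings of the derivation rule under (i) and (ii).
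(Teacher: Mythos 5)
Your proof is correct and follows essentially the same route as the paper: items (1)--(4) are the same direct Leibniz computations, and for item (5) you identify the alternating sum as $d\bigl(e_Ym(1-e_Y)(1-e_X)ne_X\bigr)$ and kill it using that $(1-e_Y)$ annihilates $A$ on the left and $(1-e_X)$ on the right, which is exactly the paper's argument (your explicit factorization $\xi\eta$ just spells out the one line the paper leaves implicit).
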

\begin{proof}
Trivially, $d(xyz)=xd(yz)+d(x)yz=xyd(z)+xd(y)z+d(x)yz$ yielding the first item.
The second item follows from the first because
\begin{align*}
d(e_Yme_X) &= e_Ymd(e_X)+e_Yd(m)e_X+d(e_Y)me_X\\  &= md(e_X)+d(m)+d(e_Y)m.
\end{align*}
The third item is $d(e_Ye_X) = e_Yd(e_X)+d(e_Y)e_X= d(e_X)+d(e_Y)$.  The fourth item follows because if $z=mn$ with $m\in \nup Y$ and $n\in \nup X$, then $d(z)=d(mn)=md(n)+d(m)n=0$.  The final item is equivalent to the equation  \[d\left(e_Ym(1-e_Y)(1-e_X)ne_X\right)=0,\] which follows because $(1-e_Y)A=0=A(1-e_X)$.
This completes the proof.
\end{proof}

It follows from the proposition that a derivation is for all practical purposes determined by its effect on $\AIrr_{\mathscr K(M)}(e_X,e_Y)=e_YMe_X\setminus \nup Y\nup X$.  This in turn is a $G_Y\times G_X^{op}$-set.  It is this feature that we shall exploit.

\subsection{The case $X,Y$ are incomparable}
Suppose now that $X,Y$ are incomparable and that $A$ is a $G_Y\times G_X^{op}$-bimodule.  Let $W_{X,Y}$ be the subspace of $k[e_YMe_X]$ spanned by $\nup Y \nup X\cap e_YMe_X$ and the elements of the form:
\begin{itemize}
\item [($\dagger$)] $e_Ymne_X+e_Yme_Ye_Xne_X-e_Yme_Yne_X-e_Yme_Xne_X$ with $m,n\in M$.
\end{itemize}
Note that the element in ($\dagger$) can be written in the more succinct form \[e_Ym(1-e_Y)(1-e_X)ne_X.\]

The proof of the next proposition is straightforward and so we omit it.

\begin{Prop}
One has $W_{X,Y}$ is a $G_Y\times G_X^{op}$-submodule of $k[e_YMe_X]$.
\end{Prop}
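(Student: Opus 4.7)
The plan is to verify that each of the two families of generators spanning $W_{X,Y}$ is stable under the $G_Y \times G_X^{op}$-action, i.e.\ under left multiplication by elements of $G_Y$ and right multiplication by elements of $G_X$. Since $g \in G_Y$ satisfies $g = e_Y g = g e_Y$ and $h \in G_X$ satisfies $h = e_X h = h e_X$, the subspace $k[e_Y M e_X]$ is clearly stable under this action, so it suffices to show generators go to generators.

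For the second family, the computation is purely formal. Given $g \in G_Y$, $h \in G_X$ and $m,n \in M$, writing the generator in the compact form $e_Y m(1-e_Y)(1-e_X)n e_X$, one computes
\[
g \cdot e_Y m(1-e_Y)(1-e_X)n e_X \cdot h = e_Y (gm)(1-e_Y)(1-e_X)(nh) e_X,
\]
using only $g e_Y = g$ on the left and $e_X h = h$, $h e_X = h$ on the right. Expanding this back out exhibits it as a generator of type ($\dagger$) with $m$ and $n$ replaced by $gm$ and $nh$, respectively. So this family is preserved.

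For the first family, one must show that $\nup Y \nup X \cap e_Y M e_X$ is stable. Suppose $z = uv \in e_Y M e_X$ with $u \in \nup Y$ and $v \in \nup X$. The key input is that the support map $\sigma \colon M \to \Lambda(M)$ is a homomorphism of (meet-)semilattices (Propositions~\ref{latticeimage} and \ref{primeideallattice}), and that $\sigma(g) = Y$ for $g \in G_Y$. Hence $\sigma(gu) = Y \wedge \sigma(u)$, and the hypothesis $\sigma(u) \ngeq Y$ forces $Y \wedge \sigma(u) \ngeq Y$, so $gu \in \nup Y$. Dually, $vh \in \nup X$. Therefore $g z h = (gu)(vh) \in \nup Y \nup X$, and it plainly still lies in $e_Y M e_X$.

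Combining these two verifications shows $W_{X,Y}$ is closed under the action, proving the proposition. There is no real obstacle here; the only subtlety is remembering to argue that $\sigma$ respects the lattice structure so that left/right multiplication by units of maximal subgroups cannot push an element out of $\nup Y$ or $\nup X$.
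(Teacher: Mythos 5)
Your proof is correct; the paper omits this verification as straightforward, and your direct check that each of the two families of spanning elements is carried to elements of the same family is exactly the intended argument. One small simplification for the first family: you do not need that $\sigma$ respects the lattice structure, since $\nup Y$ and $\nup X$ are two-sided ideals of $M$ (as noted in the paper), so $g\nup Y\subseteq \nup Y$ and $\nup X h\subseteq \nup X$ give $g(uv)h=(gu)(vh)\in \nup Y\nup X$ at once, while stability of $e_YMe_X$ follows from $ge_Y=g$ and $e_Xh=h$ as you say.
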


We put $V_{X,Y}=k[e_YMe_X]/W_{X,Y}$.  We now show that $V_{X,Y}$ represents outer derivations as a functor from $G_Y\times G_X^{op}$-modules to $k$-vector spaces.

\begin{Prop}\label{incomparablecase}
One has $H^1(M,A)\cong \Hom_{k[G_Y\times G_X^{op}]}(V_{X,Y},A)$.
\end{Prop}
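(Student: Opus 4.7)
The plan is to exhibit mutually inverse $k$-linear maps between $\Der(M,A)/\IDer(M,A)$ and $\Hom_{k[G_Y\times G_X^{op}]}(V_{X,Y},A)$, both built around restriction of derivations to $e_YMe_X$.

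In the forward direction I would send a derivation $d$ to the map $\widetilde d\colon V_{X,Y}\to A$ defined by $\widetilde d(e_Yme_X+W_{X,Y})=d(e_Yme_X)$; well-definedness on $W_{X,Y}$ is immediate from Proposition~\ref{trivderprops}\eqref{trivderprops.4} and~\eqref{trivderprops.5}.  The key step for $(G_Y\times G_X^{op})$-equivariance is to show that $d(g)=g\cdot d(e_Y)$ for $g\in G_Y$ and $d(h)=d(e_X)\cdot h$ for $h\in G_X$: from $g=ge_Y$ and Leibniz one has $d(g)=g\,d(e_Y)+d(g)\cdot e_Y$, and the incomparability assumption forces $\rho_X(e_Y)=0$, killing the last term (dually for $h$).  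Leibniz then yields $d(gmh)=g\cdot d(e_Yme_X)\cdot h$.  For the kernel, any inner derivation $d_a$ annihilates $e_YMe_X$ because $\sigma(e_Yme_X)\leq X\wedge Y$ dominates neither $X$ nor $Y$, so both $\rho_Y(e_Yme_X)$ and $\rho_X(e_Yme_X)$ vanish; conversely, if $d$ vanishes on $e_YMe_X$, then $d(e_Ye_X)=0$, so by Proposition~\ref{trivderprops}\eqref{trivderprops.3} the element $a:=d(e_Y)=-d(e_X)$ satisfies $d=d_a$ via Proposition~\ref{trivderprops}\eqref{trivderprops.2}.

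For the reverse direction, given $\phi\in\Hom_{k[G_Y\times G_X^{op}]}(V_{X,Y},A)$, I set $a=\phi(e_Ye_X+W_{X,Y})$ and define
\[d_\phi(m)\;:=\;\phi(e_Yme_X+W_{X,Y})\;-\;m\cdot a.\]
Direct expansion gives
\[d_\phi(mn)-m\cdot d_\phi(n)-d_\phi(m)\cdot n \;=\;\phi(e_Ymne_X)-m\cdot\phi(e_Yne_X)-\phi(e_Yme_X)\cdot n+m\cdot a\cdot n,\]
which I would verify is zero by splitting on whether $\sigma(m)\geq Y$ and whether $\sigma(n)\geq X$.  When at least one of these fails, the residual reduces, via ($\dagger$) applied to $m,n$, to a combination of elements admitting an explicit factorization into $\nup Y\cdot\nup X$ (for instance $e_Yme_Xne_X=(e_Yme_X)(ne_X)$, where the necessary $\sigma$-estimates come from the case hypothesis), and $\phi$ annihilates all such elements.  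The decisive case is $m\in M_Y$, $n\in M_X$: here equivariance identifies $m\cdot a\cdot n=\phi(e_Yme_Ye_Xne_X)$, which is exactly the term by which ($\dagger$) differs from the naive Leibniz identity, supplying the required cancellation.  Finally $d_\phi$ restricts to $\phi$ on $e_YMe_X$ because $e_Yme_X\cdot a=0$ by the same $\rho_Y$-vanishing.

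The hard part is this last case: one has to recognize that $W_{X,Y}$ has been engineered precisely so that ($\dagger$) encodes Leibniz modulo the inner-derivation correction $-m\cdot a$, and separately check in the easier cases that the residual terms individually lie in $\nup Y\cdot\nup X\cap e_YMe_X$ and are therefore killed by $\phi$.
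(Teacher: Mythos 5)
Your proposal is correct and follows essentially the same route as the paper: the same induced map $d\mapsto \ov d$ on $V_{X,Y}$ (well-defined by Proposition~\ref{trivderprops}\eqref{trivderprops.4},\eqref{trivderprops.5}), the same identification of the kernel with inner derivations via $a=d(e_Y)=-d(e_X)$, and the same inverse formula $d(m)=f([e_Yme_X])-mf([e_Ye_X])$ verified by the identical four-case split on $\sigma(m)\geq Y$ and $\sigma(n)\geq X$. The only cosmetic difference is your detour through $d(g)=g\,d(e_Y)$ for equivariance, where the paper just kills the cross terms in $d(gzh)$ directly using $z\in\nup Y\cap\nup X$.
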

\begin{proof}
Let $d\in \Der(M,A)$.  We can extend $d$ to $kM$ linearly.  Proposition~\ref{trivderprops} directly implies $d$ annihilates $W_{X,Y}$.  Thus there is a well-defined induced map $\ov d\colon V_{X,Y}\to A$.  Write $[z]$ for the class of $z\in e_YMe_X$ in $V_{X,Y}$.  Note that $e_YMe_X\subseteq \nup Y\cap \nup X$ because $X,Y$ are incomparable.

We claim that $\ov d$ is $G_Y\times G_X^{op}$-equivariant.  Let $g\in G_Y$ and $h\in G_X$.  Then $\ov d(g[z]h) = gzd(h)+gd(z)h+d(g)zh=gd(z)h=gd([z])h$ because $z\in \nup Y\cap \nup X$.

Define $\Phi\colon \Der(M,A)\to \Hom_{k[G_Y\times G_X^{op}]}(V_{X,Y},A)$ by $\Phi(d)=\ov d$.  First we show that $\ker \Phi=\IDer(M,A)$.  If $a\in A$ and $d_a$ is the corresponding inner derivation, then for $z\in e_YMe_X$ one has $d_a(z) = za-az=0$, as $z\in \nup Y\cap \nup X$.  Thus $\ov{d_a}=0$.   Next, suppose that $\ov d=0$.  Proposition~\ref{trivderprops}\eqref{trivderprops.3} then implies $0=\ov d([e_Ye_X]) = d(e_X)+d(e_Y)$.  Set $a=d(e_Y)=-d(e_X)$.  Then Proposition~\ref{trivderprops}\eqref{trivderprops.2} yields, for $m\in M$,
\[d(m) = \ov d([e_Yme_X])-md(e_X) -d(e_Y)m= ma-am\] and so $d$ is inner.

It remains to prove $\Phi$ is an epimorphism.  Let $f\in \Hom_{k[G_Y\times G_X^{op}]}(V_{X,Y},A)$.  Define $d\colon M\to A$ by $d(m) = f([e_Yme_X])-mf([e_Ye_X])$.  We must show that $d$ is a derivation.  Let $m,n\in M$.  Then \[d(mn) = f([e_Ymne_X])-mnf([e_Ye_X])\] whereas
\begin{align*}
md(n)+d(m)n =& mf([e_Yne_X])-mnf([e_Ye_X])\\  &+ f([e_Yme_X])n-mf([e_Ye_X])n.
\end{align*}
Thus it suffices to show that
\begin{equation}\label{derincomparablecase}
f([e_Ymne_X]) = mf([e_Yne_X])+ f([e_Yme_X])n-mf([e_Ye_X])n.
\end{equation}
We have four cases.
\begin{Case}
Suppose $\sigma(m)\ngeq Y$ and $\sigma(n)\ngeq X$.  Then $e_Ymne_X\in \nup Y\nup X$ and so both sides of \eqref{derincomparablecase} are $0$.
\end{Case}
\begin{Case}
Suppose $\sigma(m)\geq Y$ and $\sigma(n)\ngeq X$.  Then $[(e_Yme_Ye_X)ne_X]=0=[(e_Yme_X)ne_X]$ and so $[e_Ymne_X]=[e_Yme_Yne_X]$ by definition of $W_{X,Y}$.  Since $\sigma(n)\ngeq X$, the right hand side of \eqref{derincomparablecase} is then $mf([e_Yne_X]) = f([e_Yme_Yne_X])=f([e_Ymne_X])$ by $G_Y\times G_X^{op}$-equivariance of $f$.
\end{Case}
\begin{Case}
The case $\sigma(m)\ngeq Y$ and $\sigma(n)\geq X$ is dual to the previous one.
\end{Case}
\begin{Case}
Assume $\sigma(m)\geq Y$ and $\sigma(n)\geq X$.  By the $G_Y\times G_X^{op}$-equivariance of $f$, the right hand side of \eqref{derincomparablecase} in this case is \[f([e_Yme_Yne_X]+[e_Yme_Xne_X]-[e_Yme_Ye_Xne_X])=f([e_Ymne_X])\] where the last equality uses the definition of $W_{X,Y}$.
\end{Case}
It follows that $d$ is a derivation.  Next, suppose that $z\in e_YMe_X$.  Then $\ov d([z]) = d(z) = f([e_Yze_X])-zf([e_Ye_X]) = f([z])$ because $z\in \nup Y$ and $e_Yze_X=z$.  This completes the proof that $\Phi$ is surjective, thereby establishing the proposition.
\end{proof}

\subsection{The case $X=Y$}
This case differs from the other cases in that we do not in general obtain a representing object for $H^1(M,A)$, although we do so if the characteristic of the field does not divide the order of the maximal subgroup $G_X$.

\begin{Def}\label{defsimeeqcase}
Let $I_X=e_XMe_X\setminus G_X$.  Then $I_X$ is an ideal of $e_XMe_X$, and so in particular is $G_X\times G_X^{op}$-invariant.
Define $\sim$ to be the least equivalence relation on $I_X$ such that:
\begin{enumerate}
\item\label{defsimeeqcase.1} $e_Xmne_X\sim e_Xme_Xne_X$ for all $m,n\in M$ with $\sigma(mn)\ngeq X$;
\item\label{defsimeeqcase.2} $z\sim z'$ for all $z,z'\in (\nup X)^2$.
\end{enumerate}
\end{Def}

It is straightforward to verify that $\sim$ is a $G_X\times G_X^{op}$-set congruence.

\begin{Prop}\label{GXsetcong}
The equivalence relation $\sim$ is a $G_X\times G_X^{op}$-set congruence.
\end{Prop}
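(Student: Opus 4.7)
Since $\sim$ is defined as the least equivalence relation on $I_X$ containing a certain set $R_0$ of generating pairs, the cleanest way to establish $G_X\times G_X^{op}$-equivariance is to show that the action of each $(g,h)\in G_X\times G_X^{op}$ carries $R_0$ into $R_0$.  Indeed, the set
\[
R_{g,h}=\{(a,b)\in I_X\times I_X\mid gah\sim gbh\}
\]
is an equivalence relation on $I_X$, so if $R_{g,h}\supseteq R_0$ then minimality of $\sim$ forces $\sim\;\subseteq R_{g,h}$, which is exactly the desired implication $a\sim b\Rightarrow gah\sim gbh$.  The action is well-defined because $G_X$ is the group of units of the monoid $e_XMe_X$, and so $I_X$ is a two-sided ideal of $e_XMe_X$.

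For a type (1) generator $(e_Xmne_X,\,e_Xme_Xne_X)$ with $\sigma(mn)\ngeq X$, the identities $g=e_Xg=ge_X$ and $h=e_Xh=he_X$ let me rewrite the image under $(g,h)$ as
\[
\bigl(e_X(gm)(nh)e_X,\;e_X(gm)e_X(nh)e_X\bigr),
\]
another pair of type (1) form with $m'=gm$ and $n'=nh$.  The only thing to check is $\sigma(m'n')\ngeq X$, and since $e_X=gg\inv\in MgM$ forces $MgM=Me_XM=X$ we have $\sigma(g)=\sigma(h)=X$, whence
\[
\sigma(m'n')=\sigma(g(mn)h)=X\wedge\sigma(mn)\wedge X.
\]
If $\sigma(mn)\ngeq X$, then $X\wedge\sigma(mn)<X$ in $\Lambda(M)$ (otherwise $X\leq X\wedge\sigma(mn)\leq\sigma(mn)$), so $\sigma(m'n')\ngeq X$ as required.

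For a type (2) generator with $z,z'\in(\nup X)^2\cap e_XMe_X$, first observe that $\nup X=\{m\in M\mid\sigma(m)\ngeq X\}$ is a two-sided ideal of $M$: since $\sigma$ is a lattice homomorphism, $\sigma(um)=\sigma(u)\wedge\sigma(m)\leq\sigma(m)$, so $\sigma(m)\ngeq X$ forces $\sigma(um)\ngeq X$, and dually on the right.  Writing $z=ab$ with $a,b\in\nup X$ gives $gzh=(ga)(bh)\in(\nup X)^2$, and the same argument yields $gz'h\in(\nup X)^2$; both elements obviously lie in $e_XMe_X$, so $(gzh,gz'h)$ is again a type (2) pair.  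This exhausts $R_0$, and the proof is complete.  I do not anticipate any substantive obstacles---the only moving parts are the elementary lattice manipulation that $X\wedge\sigma(mn)<X$ whenever $\sigma(mn)\ngeq X$, and the ideal property of $\nup X$.
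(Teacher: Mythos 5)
Your proof is correct, and it is precisely the routine verification that the paper omits (the paper simply asserts the statement is "straightforward to verify"). The minimality argument via the relations $R_{g,h}$, the stability of the type (1) generators under $m\mapsto gm$, $n\mapsto nh$ using $\sigma(g)=\sigma(h)=X$, and the ideal property of $\nup X$ for the type (2) generators are exactly the intended checks.
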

%

Let $W_{X}$ be the submodule of $kI_X$ spanned by $(\nup X)^2\cap I_X$ and all differences $m-n$ with $m\sim n$. Set $V_{X,X} = kI_X/W_{X}$.  Because $(\nup X)^2$ is $G_X\times G_X^{op}$-invariant and $\sim$ is a $G_X\times G_X^{op}$-set congruence, $V_{X,X}$ is a $G_X\times G_X^{op}$-module.  We write $[m]$ for the image of $m\in I_X$ under the projection $kI_X\to V_{X,X}$.

The following lemma will be useful.

\begin{Lemma}\label{kille}
Let $d\colon M\to A$ be a derivation.  Then:
\begin{enumerate}
\item\label{kille.1} $d(e_X)=0$;
\item\label{kille.2} $d(e_Xme_X)=d(m)$ for all $m\in M$;
\item\label{kille.3} $d(me_Xn) = d(mn)$ for all $m,n\in M$.
\end{enumerate}
\end{Lemma}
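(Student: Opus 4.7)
The plan is to exploit a single foundational observation: because $A$ is a $G_X \times G_X^{op}$-module viewed as an $M$-bimodule via $\rho_X \times \rho_X^{op}$, and because $\rho_X(e_X) = e_X$ is the identity of $G_X$, the element $e_X \in M$ acts as the identity on $A$ from both sides. All three items will then fall out of the derivation identity applied in the right place.

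First I would establish \eqref{kille.1}. Writing $e_X = e_X \cdot e_X$ and applying the derivation property gives
\[
d(e_X) \;=\; e_X \, d(e_X) + d(e_X)\, e_X \;=\; d(e_X) + d(e_X) \;=\; 2\, d(e_X),
\]
where the middle equality uses that $e_X$ acts trivially on $A$ on both sides. Hence $d(e_X) = 0$ in any characteristic.

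For \eqref{kille.2}, I would just invoke Proposition~\ref{trivderprops}\eqref{trivderprops.2} with $Y = X$: this says $d(m) = d(e_X m e_X) - m\, d(e_X) - d(e_X)\, m$, and the last two terms vanish by \eqref{kille.1}. For \eqref{kille.3}, I would apply Proposition~\ref{trivderprops}\eqref{trivderprops.1} with $x = m$, $y = e_X$, $z = n$ to obtain
\[
d(m e_X n) \;=\; m e_X\, d(n) + m\, d(e_X)\, n + d(m)\, e_X n.
\]
The middle term vanishes by \eqref{kille.1}. For the outer terms, associativity of the bimodule action together with the identity action of $e_X$ gives $m e_X \cdot d(n) = m \cdot (e_X \cdot d(n)) = m\, d(n)$ and likewise $d(m) e_X\, n = d(m)\, n$, so $d(m e_X n) = m\, d(n) + d(m)\, n = d(mn)$.

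There is no real obstacle here: once the identity action of $e_X$ on $A$ is noted, every assertion is a two-line derivation-rule calculation. The only mild subtlety is to remember that the $M$-bimodule structure on $A$ factors through $\rho_X$ on both sides (so $e_X$ really does act as a two-sided unit), rather than to try to manipulate $e_X$ inside $M$ directly.
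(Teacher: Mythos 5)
Your proof is correct and follows essentially the same route as the paper's: both arguments rest on the observation that $e_X$ acts as a two-sided identity on $A$ (since the bimodule structure factors through $\rho_X$ and $\rho_X(e_X)=e_X$ is the identity of $G_X$), from which $d(e_X)=2d(e_X)=0$ and the other two items follow by the Leibniz rule. The only cosmetic difference is that you cite Proposition~\ref{trivderprops} for items \eqref{kille.2} and \eqref{kille.3} where the paper expands the same three-term Leibniz identity directly.
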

\begin{proof}
From $d(e_X)=d(e_X^2) = e_Xd(e_X)+d(e_X)e_X=2d(e_X)$, we conclude $d(e_X)=0$. Therefore, $d(e_Xme_X) = e_Xmd(e_X)+e_Xd(m)e_X+d(e_X)me_X=d(m)$.  Finally, the last equality follows because $d(me_Xn)=me_Xd(n)+md(e_X)n+d(m)e_Xn=md(n)+d(m)n=d(mn)$.
\end{proof}

\begin{Prop}
Let $A$ be a $G_X\times G_X^{op}$-module.  Then
\[H^1(M,A)\cong H^1(G_X,A)\oplus \Hom_{k[G_X\times G_X^{op}]}(V_{X,X},A).\]  In particular, if the characteristic of $k$ does not divide the order of $G_X$, then the isomorphism \[H^1(M,A)\cong \Hom_{k[G_X\times G_X^{op}]}(V_{X,X},A)\] holds.
\end{Prop}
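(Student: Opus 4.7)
The plan is to isolate the ``group contribution'' $H^1(G_X, A)$ and show that the complement is governed by $V_{X,X}$. The key structural fact, used throughout, is that both $M$-actions on $A$ factor through $\rho_X$, so that $m \cdot a = 0 = a \cdot m$ for every $m \in \nup X$ and $a \in A$.

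First I would construct a section $s\colon \Der(G_X, A) \to \Der(M, A)$ of the restriction map $r\colon \Der(M, A) \to \Der(G_X, A)$ by setting $s(d')(m) = d'(\rho_X(m))$, with the convention $d'(0) = 0$. A short case check on whether $m,n$ lie in $M_X$ or $\nup X$ shows that $s(d')$ is a derivation, and clearly $r \circ s = \mathrm{id}$, so writing $\Der_0(M,A) = \ker r$ we obtain the splitting $\Der(M, A) = \Der_0(M, A) \oplus s(\Der(G_X, A))$. Because both $M$-actions factor through $\rho_X$, every inner derivation satisfies $d_a(m) = \rho_X(m)a - a\rho_X(m) = s(d_a|_{G_X})(m)$, whence $\IDer(M, A) = s(\IDer(G_X, A))$; in particular $\IDer(M, A) \cap \Der_0(M, A) = 0$. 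Passing to quotients yields
\[ H^1(M, A) \;\cong\; \Der_0(M, A) \oplus H^1(G_X, A). \]

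Next I would construct mutually inverse maps between $\Der_0(M, A)$ and $\Hom_{k[G_X \times G_X^{op}]}(V_{X,X}, A)$. Going forward, for $d \in \Der_0(M, A)$ the restriction to $I_X$ extends linearly to $kI_X \to A$, and factors through $V_{X,X}$ because (i) $d$ vanishes on $(\nup X)^2$ by the Leibniz rule (since $\rho_X(m) = \rho_X(n) = 0$), and (ii) relation (1) of $\sim$ is respected by Lemma~\ref{kille} parts (2)--(3); equivariance under $G_X \times G_X^{op}$ follows from $d|_{G_X} = 0$. Going backward, given an equivariant $f\colon V_{X,X} \to A$, extend $f$ by zero on $k[G_X]$ to $k[e_XMe_X]$ and set $d_f(m) = f(e_Xme_X)$; then $d_f|_{G_X} = 0$ automatically since $e_Xge_X = g \in G_X$.

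The main obstacle is verifying the Leibniz rule for $d_f$, via case analysis on whether $\sigma(m), \sigma(n) \geq X$. When both are $\geq X$, both sides vanish because $d_f|_{M_X} = 0$ and $\sigma(mn) \geq X$. When $\sigma(m) \geq X$ and $\sigma(n) \ngeq X$, equivariance of $f$ together with relation (1) of $\sim$ yield $d_f(mn) = f(e_Xmne_X) = f(e_Xme_Xne_X) = \rho_X(m) \cdot f(e_Xne_X) = m \cdot d_f(n)$, while $d_f(m) \cdot n = 0$; the case $\sigma(m) \ngeq X$, $\sigma(n) \geq X$ is dual. When both are $\ngeq X$, the product $e_Xmne_X = (e_Xm)(ne_X)$ lies in $(\nup X)^2 \cap I_X \subseteq W_X$, so $d_f(mn) = 0$ and both $m \cdot d_f(n)$ and $d_f(m) \cdot n$ vanish. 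A direct check then shows $d \mapsto \bar d$ and $f \mapsto d_f$ are mutually inverse. For the ``in particular'' statement, Proposition~\ref{grouphochiszero} forces $H^1(G_X, A) = 0$ when the characteristic of $k$ does not divide $|G_X|$, yielding the stated isomorphism.
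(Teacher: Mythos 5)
Your proof is correct and follows essentially the same route as the paper: the representing object $V_{X,X}$, the map $f\mapsto d_f$ with $d_f(m)=f([e_Xme_X])$, the case analysis on whether $\sigma(m),\sigma(n)\geq X$, and the use of Lemma~\ref{kille} to see that derivations respect $\sim$ are all identical to the paper's argument. The only difference is organizational: you split $\Der(M,A)$ at the level of derivations via the explicit inflation section $s(d')=d'\circ\rho_X$ and observe $\IDer(M,A)=s(\IDer(G_X,A))$, whereas the paper packages the same computations into a single map $\Phi$ to the direct sum and verifies $\ker\Phi=\IDer(M,A)$ and surjectivity separately.
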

\begin{proof}
Let $d\colon M\to A$ be a derivation.  Obviously, $d|_{G_X}\in \Der(G_X,A)$.  Next we claim that, for $x,y\in I_X$, one has that $x\sim y$ implies $d(x)=d(y)$.  To this effect, define an equivalence relation $\equiv$ on $I_X$ by $x\equiv y$ if $d(x)=d(y)$.  We must show that $\equiv$ satisfies \eqref{defsimeeqcase.1} and \eqref{defsimeeqcase.2} of Definition~\ref{defsimeeqcase}.  Proposition~\ref{trivderprops}\eqref{trivderprops.4} implies that $d$ annihilates $(\nup X)^2$ and so \eqref{defsimeeqcase.2} is satisfied.  Lemma~\ref{kille}\eqref{kille.2} implies that \eqref{defsimeeqcase.1} is satisfied.

Since $d$ annihilates $(\nup X)^2$ by Proposition~\ref{trivderprops}, it follows that $d$ induces a $k$-linear mapping $\ov d\colon V_{X,X}\to A$.  We check that it is $G_X\times G_X^{op}$-equivariant.  Indeed, if $g,h\in G_X$, then for $m\in I_X$, one has $\sigma(m)\ngeq X$ and so \[\ov d(g[m]h) = gmd(h)+gd(m)h+d(g)mh=gd(m)h=g\ov d([m])h.\]

Thus we may define a $k$-linear mapping \[\Phi\colon \Der(M,A)\to H^1(G_X,A)\oplus \Hom_{k[G_X\times G_X^{op}]}(V_{X,X},A)\] by $\Phi(d) = (d|_{G_X}+\IDer(G_X,A),\ov d)$ where we identify $H^1(G_X,A)$ with $\Der(G_X,A)/\IDer(G_X,A)$.

First we verify that $\ker \Phi=\IDer(M,A)$.  If $a\in A$ and $d_a$ is the corresponding inner derivation, then, for $g\in G_X$, one has $d_a(g)=ga-ag$ and so $d_a|_{G_X}$ is inner.  Also, if $m\in I_X$, then $\ov {d_a}([m]) = d_a(m) = ma-am=0$ because $\sigma(m)\ngeq X$.  Thus $\IDer(M,A)\subseteq \ker \Phi$.  Suppose next that $d\in \ker \Phi$.  Then $d|_{G_X} = d_a$ for some $a\in A$.  We claim $d=d_a$ on $M$.  Indeed, if $\sigma(m)\geq X$, then $e_Xme_X\in G_X$ and so applying Lemma~\ref{kille} we conclude that $d(m)=d(e_Xme_{X}) = e_Xme_Xa-ae_Xme_X=ma-am$.  On the other hand, if $\sigma(m)\ngeq X$, then $e_Xme_X\in I_X$.  Thus $d(m)=d(e_Xme_X)=\ov d([e_Xme_X])=0$, where we have again used Lemma~\ref{kille}.  But $ma-am=0$ because $\sigma(m)\ngeq X$.  Thus $d=d_a$.

It remains to show that $\Phi$ is onto.  Let $D\colon G_X\to A$ be a derivation and let $f\colon V_{X,X}\to A$ be $G_X\times G_X^{op}$-equivariant.  Define $d\colon M\to A$ by
\[d(m) = \begin{cases} D(e_Xme_X) & \text{if}\ \sigma(m)\geq X\\ f([e_Xme_X]) & \text{else.}\end{cases}\]  Let us check that $d$ is a derivation.  Let $m,n\in M$.

\setcounter{Case}{0}
\begin{Case}
First suppose that $\sigma(m),\sigma(n)\ngeq X$. Then $d(mn) = f([e_Xmne_X])=0$ because $e_Xmne_X\in (\nup X)^2$.  Also, we have $md(n)+d(m)n=0$.
\end{Case}

\begin{Case}
Next suppose that $\sigma(m)\geq X$ and $\sigma(n)\ngeq X$.  We still have $d(mn)=f([e_Xmne_X])$.  Also \[md(n)+d(m)n=md(n) = e_Xme_Xf([e_Xne_X]) = f([e_Xme_Xne_X])\] because $e_Xme_X\in G_X$.  But the definition of $\sim$ implies that $[e_Xme_Xne_X]=[e_Xmne_X]$ and so $md(n)+d(m)n=d(mn)$.
\end{Case}

\begin{Case}
The case $\sigma(m)\ngeq X$ and $\sigma(n)\geq X$ is dual to the previous case.
\end{Case}

\begin{Case}
Next assume $\sigma(mn)\geq X$, then $e_Xme_X, e_Xne_X, e_Xmne_X\in G_X$.  Also $e_Xmne_X=e_Xme_Xne_X$ by Proposition~\ref{DOcharacterization}.  Thus we have
\begin{align*}
d(mn)&=D(e_Xmne_X)=D(e_Xme_Xne_X)\\ &= e_Xme_XD(e_Xne_X)+D(e_Xme_X)e_Xne_X\\ &=md(n)+d(m)n.
\end{align*}
\end{Case}

This completes the proof that $d$ is a derivation.  It remains to verify that $\Phi(d)=(D+\IDer(G_X,A),f)$.  If $g\in G_X$, then $d(g)=D(e_Xge_X)=D(g)$ and so $d|_{G_X}=D$.  If $m\in I_X$, then $\ov d([m]) = d(m) = f([e_Xme_X])=f([m])$.  Thus $\Phi(d) = (D+\IDer(G_X,A),f)$, as required.

The final statement of the proposition is a direct consequence of Proposition~\ref{grouphochiszero}.
\end{proof}

\subsection{The case $X$ and $Y$ are strictly comparable}
We only handle the case $X<Y$, since the case $Y<X$ can be handled by considering the dual monoid $M^{op}$.
So suppose now that $X<Y$ and $A$ is a $G_Y\times G_X^{op}$-module.   Without loss of generality, we may assume that $e_X < e_Y$.  Indeed, if we put $f=e_Ye_Xe_Y$, then $f^2=e_Ye_Xe_Ye_Xe_Y=e_Ye_Xe_Y$ by Proposition~\ref{DOcharacterization}. Also $\sigma(f)=X$ and $f<e_Y$.  Thus we can replace $e_X$ by $f$ to obtain the desired property.

\begin{Def}\label{definesim}
Let us define $\sim$ to be the least equivalence relation on $e_YMe_X$ such that:
\begin{enumerate}
\item\label{definesim.1} $e_Ymne_X\sim e_Yme_Yne_X$ for $m,n\in M$;
\item\label{definesim.2} if $\sigma(m)\ngeq Y$, then $e_Ymne_X\sim e_Yme_Xne_X$ for all $n\in M$;
\item\label{definesim.3} $z\sim z'$ for all $z,z'\in \nup Y\nup X$.
\end{enumerate}
\end{Def}


It is routine to verify that $\sim$ is a $G_Y\times G_X^{op}$-set congruence.

\begin{Prop}
The equivalence relation $\sim$ is a $G_Y\times G_X^{op}$-set congruence.
\end{Prop}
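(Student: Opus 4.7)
The plan is to verify that each of the three clauses defining $\sim$ yields a relation stable under the $G_Y \times G_X^{op}$-action on $e_YMe_X$; since these clauses generate $\sim$ as an equivalence relation, this will force $\sim$ itself to be a congruence. Throughout, fix $g \in G_Y$ and $h \in G_X$. Since $e_Y$ and $e_X$ are the identities of the maximal subgroups $G_Y$ and $G_X$, one has $e_Yg = g = ge_Y$ and $e_Xh = h = he_X$, which lets us absorb the outer boundary idempotents in every expression of the form $e_Y(\cdots)e_X$.

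I would first dispatch clauses (1) and (3), which are routine. For (1): the absorption identities yield $g(e_Ymne_X)h = e_Y(gm)(nh)e_X$ and $g(e_Yme_Yne_X)h = e_Y(gm)e_Y(nh)e_X$, so the translated pair is an instance of clause (1) with $m,n$ replaced by $gm,nh$. For (3): the set $\nup Y\nup X$ is a two-sided ideal of $M$ (as the product of the two-sided ideals $\nup Y$ and $\nup X$), hence is stable under multiplication by $g$ on the left and $h$ on the right; thus translates of elements of $\nup Y\nup X$ remain in $\nup Y\nup X$ and so continue to be related by (3).

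The only subtlety lies in clause (2), where a support-theoretic side condition must be preserved. The same absorption computation gives $g(e_Ymne_X)h = e_Y(gm)(nh)e_X$ and $g(e_Yme_Xne_X)h = e_Y(gm)e_X(nh)e_X$, so the translated pair has the shape of clause (2) provided $\sigma(gm) \ngeq Y$. Since $\sigma\colon M \to \Lambda(M)$ is a homomorphism into a meet-semilattice and $\sigma(g) = Y$ (because $g$ lies in the maximal subgroup $G_Y = G_{e_Y}$, and hence $MgM = Me_YM = Y$), we have $\sigma(gm) = Y \wedge \sigma(m)$. The hypothesis $\sigma(m) \ngeq Y$ is equivalent to $Y \wedge \sigma(m) \neq Y$, whence $\sigma(gm) < Y$, and in particular $\sigma(gm) \ngeq Y$, as required.

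Combining the three verifications, every defining pair of $\sim$ translates to another defining pair under the $G_Y \times G_X^{op}$-action, and hence the equivalence relation they generate is stable under this action. The only real obstacle is the bookkeeping in clause (2): one must observe that multiplication by an element of $G_Y$, whose support is exactly $Y$, can only meet the support of $m$ downward, and therefore preserves (rather than destroys) the inequality $\sigma(m) \ngeq Y$; once this is in hand, the rest of the argument is a mechanical check.
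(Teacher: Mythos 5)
Your proof is correct and is precisely the routine verification that the paper omits (it states the proposition without proof): one checks that each generating clause of $\sim$ is carried to another instance of a generating clause under the action $z\mapsto gzh$, absorbing $e_Y$ and $e_X$ into $g$ and $h$. Your treatment of the side condition in clause (2) is the only point requiring care, and your argument via $\sigma(gm)=Y\wedge\sigma(m)$ is valid (one could equally note that $MgmM\subseteq MmM$ forces $\sigma(gm)\leq\sigma(m)$, so $\sigma(gm)\geq Y$ would already give $\sigma(m)\geq Y$).
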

%

Notice that $G_X$ has the structure of a $G_Y\times G_X^{op}$-set, the left action of $G_Y$ coming via the restriction of the homomorphism $\rho_X\colon M_X\to G_X$ to $G_Y$.  In fact, $G_X\cup \{0\}\subseteq kG_X$ is a $G_Y\times G_X^{op}$-set as $0$ is fixed by all elements of $G_Y\times G_X^{op}$.  Clearly, the restricted mapping $\rho_X\colon e_YMe_X\to G_X\cup \{0\}\subseteq kG_X$ is $G_Y\times G_X^{op}$-equivariant.

\begin{Prop}\label{containedinkernelofrho}
Suppose that $a,b\in e_YMe_X$ satisfy $a\sim b$.  Then $\rho_X(a)=\rho_X(b)$.
\end{Prop}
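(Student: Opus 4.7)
The plan is to show that the map $\rho_X\colon e_YMe_X\to G_X\cup\{0\}\subseteq kG_X$ is constant on each of the three types of pairs generating the equivalence relation $\sim$ in Definition~\ref{definesim}. Since $\sim$ is generated by these, that will suffice. Throughout I will use that $\sigma\colon M\to \Lambda(M)$ is a homomorphism into a meet semilattice, together with the standing reduction $e_X<e_Y$ which gives $e_Xe_Y=e_X=e_Ye_X$ and the identity $\sigma(e_Yme_X)=\sigma(m)\wedge X$ for any $m\in M$.

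For condition \eqref{definesim.3}: any $z=uv\in \nup Y\nup X$ has $\sigma(v)\ngeq X$, and since $\sigma(z)=\sigma(u)\wedge\sigma(v)\le \sigma(v)$, we get $\sigma(z)\ngeq X$, so $\rho_X(z)=0$.

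For condition \eqref{definesim.1}: both $e_Ymne_X$ and $e_Yme_Yne_X$ have support $\sigma(m)\wedge\sigma(n)\wedge X$, so $\rho_X$ kills both unless $\sigma(m),\sigma(n)\ge X$. In that case $\rho_X(e_Ymne_X)=e_Xmne_X$ and $\rho_X(e_Yme_Yne_X)=e_Xme_Yne_X$. Equality of these two elements is precisely an instance of Proposition~\ref{DOcharacterization}(5), applied with the idempotent $e_X$ in the role of $e$ and $e_Y$ in the role of $f$: the hypotheses $e_X\in MmM\cap MnM\cap Me_YM$ hold because $\sigma(m),\sigma(n)\ge X$ and $X\le Y$. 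That proposition yields $e_Xme_Yne_X=e_Xmne_X$, as required.

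For condition \eqref{definesim.2}: assume $\sigma(m)\ngeq Y$. Again both $\rho_X(e_Ymne_X)$ and $\rho_X(e_Yme_Xne_X)$ vanish unless $\sigma(m),\sigma(n)\ge X$, so we may assume $m,n\in M_X$. Since $\rho_X\colon M_X\to G_X$ is a homomorphism by Proposition~\ref{definerestrict},
\[e_Xmne_X=\rho_X(mn)=\rho_X(m)\rho_X(n)=e_Xme_Xne_X,\]
which (using $e_Xe_Y=e_X=e_Ye_X$ to simplify the outer $e_Y$'s) is exactly the equality needed.

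The only genuine content is the nonzero case of \eqref{definesim.1}, where the rectangularity of $M$ enters through Proposition~\ref{DOcharacterization}(5); the other verifications are bookkeeping on supports and an appeal to the homomorphism property of $\rho_X$ on $M_X$. I do not expect any serious obstacle; note in particular that the hypothesis $\sigma(m)\ngeq Y$ in \eqref{definesim.2} is not actually used by $\rho_X$ (the identity holds without it), which is consistent with the role of this condition being to make $\sim$ precisely fine enough to represent outer derivations, rather than to be forced on us by $\rho_X$ alone.
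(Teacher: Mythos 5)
Your proof is correct and takes essentially the same approach as the paper: both verify that the relation ``$\rho_X(x)=\rho_X(y)$'' contains each of the three generating conditions of $\sim$. The paper compresses the verification by using that $\rho_X\colon M\to G_X\cup\{0\}$ is multiplicative with $\rho_X(e_Y)=e_X=\rho_X(e_X)$, so that all three identities reduce to $\rho_X(\cdot)=\rho_X(mn)$; your explicit support bookkeeping plus Proposition~\ref{DOcharacterization}(5) is just that same multiplicativity unpacked.
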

\begin{proof}
Define an equivalence relation $\equiv$ on $e_YMe_X$ by $x\equiv y$ if $\rho_X(x)=\rho_X(y)$.  It suffices to show that $\equiv$ satisfies \eqref{definesim.1}--\eqref{definesim.3} of Definition~\ref{definesim}.  Since $\rho_X(\nup Y\nup X)=0$, \eqref{definesim.3} is trivially satisfied.  If $m,n\in M$, then $\rho_X(e_Ymne_X) = \rho_X(mn)=\rho_X(e_Yme_Yne_X)$ because $\rho_X(e_Y)=e_X=\rho_X(e_X)$.  Similarly, for \eqref{definesim.2}, one has $\rho_X(e_Ymne_X)=\rho_X(mn)=\rho_X(e_Yme_Xne_X)$.  This concludes the proof.
\end{proof}

Let $Z_{X,Y}$ be the submodule of $k(e_YMe_X)$ spanned by $\nup Y\nup X\cap e_YMe_X$ and all differences $m-n$ with $m\sim n$. Define $W_{X,Y} = k[e_YMe_X]/Z_{X,Y}$.  As $\nup Y\nup X$ is $G_Y\times G_X^{op}$-invariant and $\sim$ is a $G_Y\times G_X^{op}$-set congruence, it follows that $W_{X,Y}$ is a $G_Y\times G_X^{op}$-module.  We write $[m]$ for the image of $m\in e_YMe_X$ under the canonical homomorphism $e_YMe_X\to W_{X,Y}$.  Since $\rho_X(\nup Y\nup X)=0$, it follows from Proposition~\ref{containedinkernelofrho}, that there is a well-defined $kG_Y\times G_X^{op}$-module homomorphism $\ov{\rho_X}\colon W_{X,Y}\to kG_X$ induced by $\rho_X$.  Moreover, $\ov{\rho_X}$ is surjective and has a $G_X^{op}$-module splitting $\gamma\colon kG_X\to W_{X,Y}$ given by $h\mapsto [h]$ for $h\in G_X$. (This map is not $G_Y$-equivariant.)

Let $V_{X,Y} = \ker \ov{\rho_X}$.  Because $\gamma$ is a $k$-splitting, it follows that $V_{X,Y}$ has a $k$-vector space basis consisting of all elements of the form $[a]-\gamma\rho_X([a])$ with $a\in e_YMe_X\setminus (\nup Y\nup X\cup G_X)$ and $[a]\neq 0$.  We observe that $\gamma\rho_X(a) = [e_Xa]$.  Indeed, this is trivial if $\sigma(a)\geq X$ because in this case $\rho_X(a) = e_Xae_X=e_Xa$.  If $a\in e_YMe_X\cap \nup X$, then $\rho_X([a]) =0$.  On the other hand, $e_Xa\in \nup Y\nup X$ and so $[e_Xa]=0=\gamma(\rho_X([a]))$.  Thus $V_{X,Y}$ has a $k$-vector space basis consisting of the elements of the form $[a]-[e_Xa]$ with $a\in e_YMe_X\setminus (\nup Y\nup X\cup G_X)$ and $[a]\neq 0$.

We are now ready to state the main result of this subsection, namely that $V_{X,Y}$ represents outer derivations for $G_Y\times G_X^{op}$-modules $A$.

\begin{Prop}\label{comparablecase}
One has $H^1(M,A)\cong \Hom_{k[G_Y\times G_X^{op}]}(V_{X,Y},A)$.
\end{Prop}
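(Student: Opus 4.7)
The plan is to mimic the proof of Proposition~\ref{incomparablecase}: construct a $k$-linear map $\Phi \colon \Der(M, A) \to \Hom_{k[G_Y \times G_X^{op}]}(V_{X,Y}, A)$ whose kernel is exactly $\IDer(M, A)$ and which is surjective. A derivation $d$, extended linearly to $kM$, restricts to $k[e_Y M e_X]$; the first step is to show it annihilates the defining relations of $Z_{X,Y}$, thereby inducing $\tilde d \colon W_{X,Y} \to A$. The $\nup Y \nup X$-part of $Z_{X,Y}$ is killed by Proposition~\ref{trivderprops}\eqref{trivderprops.4}. The normalisation $e_X < e_Y$ enters crucially: since $e_Y e_X = e_X$, the identity of Proposition~\ref{trivderprops}\eqref{trivderprops.5} collapses to $d(e_Y m n e_X) = d(e_Y m e_Y n e_X)$, giving condition~\eqref{definesim.1} of $\sim$. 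For condition~\eqref{definesim.2}, under $\sigma(m) \ngeq Y$ I would expand $d(e_Y m (e_Y - e_X) n e_X)$ by Leibniz and observe that every resulting term vanishes: the left $M$-action on $A$ kills any prefix beginning with $e_Y m$ (since $\rho_Y$ sends it to zero), and the only remaining term carries the factor $\rho_X(m(e_Y-e_X) n e_X)$, which vanishes by Proposition~\ref{DOcharacterization} applied with $e = e_X$, $f = e_Y$ to yield $e_X m e_Y n e_X = e_X m n e_X = e_X m e_X n e_X$ whenever $\sigma(m), \sigma(n) \geq X$ (and trivially otherwise).

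Next, I would verify that $\tilde d|_{V_{X,Y}}$ is $G_Y \times G_X^{op}$-equivariant: on a basis vector $v = [a] - [e_X a]$, the obstruction $\tilde d(g v h) - g \tilde d(v) h$ reduces to $g(a - e_X a) \cdot d(h) + d(g) \cdot (a - e_X a) h$; the first summand vanishes because $\sigma(a), \sigma(e_X a) \leq X < Y$ annihilates the left $\rho_Y$-action, and the second because a short case split on whether $\sigma(a) \geq X$ shows $\rho_X((a - e_X a) h) = 0$. Set $\Phi(d) = \tilde d|_{V_{X,Y}}$. To identify the kernel, Proposition~\ref{trivderprops}\eqref{trivderprops.3} together with $e_Y e_X = e_X$ forces $d(e_Y) = 0$, so Proposition~\ref{trivderprops}\eqref{trivderprops.2} simplifies to $d(m) = d(e_Y m e_X) - m d(e_X)$. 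If $\Phi(d) = 0$ then $d(a) = d(e_X a)$ on $e_Y M e_X$, so $d(e_Y m e_X) = d(e_X m e_X)$ via $e_X e_Y = e_X$. A further Leibniz expansion kills every term of $d(e_X m e_X)$ except $d(e_X) \cdot m e_X$, because $e_X$ acts as zero on the left of $A$, and one checks $d(e_X) \cdot m e_X = d(e_X) \cdot m$ in both the case $\sigma(m) \geq X$ (both equal $d(e_X) \cdot e_X m e_X$) and the case $\sigma(m) \ngeq X$ (both vanish, the latter since $e_X m e_X = (e_X)(m e_X) \in \nup Y \nup X$). Therefore $d(m) = d(e_X) m - m d(e_X)$ is inner with parameter $-d(e_X)$. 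Conversely, for $d_b$ inner, $d_b(z) = -b \cdot \rho_X(z)$ for $z \in e_Y M e_X$ (as $\rho_Y$ kills $z$), and since $\rho_X(z) = \rho_X(e_X z)$ always, $\Phi(d_b)$ vanishes on every basis vector $[a] - [e_X a]$.

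For surjectivity, given an equivariant $f \colon V_{X,Y} \to A$, I would set $d(m) = f([e_Y m e_X] - [e_X m e_X])$; this is well defined since the difference lies in $\ker \ov{\rho_X} = V_{X,Y}$ (both $\rho_X$-values agree when $\sigma(m) \geq X$ and both vanish otherwise). The main obstacle, which I expect to be the most technical step, is verifying the Leibniz identity $d(mn) = m \cdot d(n) + d(m) \cdot n$. Rewriting the bimodule actions via $m \cdot f(v) = f(\rho_Y(m) \cdot v)$ and $f(v) \cdot n = f(v \cdot \rho_X(n))$, the identity splits into four cases according to whether $\sigma(m) \geq Y$ and $\sigma(n) \geq X$, and in each case both sides become $f$ applied to combinations of representatives such as $e_Y m e_Y n e_X$, $e_Y m e_X n e_X$, $e_X m e_X n e_X$, $e_X m n e_X$. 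Their equivalence in $W_{X,Y}$ is established using Definition~\ref{definesim}\eqref{definesim.1} and~\eqref{definesim.2} (the latter applied also with $m$ replaced by $e_X m$, whose $\sigma$-image lies below $Y$), together with the observation that when the $\sigma$-conditions fail the relevant ``mixed'' representatives lie in $\nup Y \nup X$ via explicit factorisations like $(e_Y m e_X)(n e_X)$ or $(e_X)(m n e_X)$, and hence collapse to the common class. Finally, $\Phi(d) = f$ follows by evaluating at $[a] - [e_X a]$: $d(e_X a) = 0$ since $e_Y e_X = e_X = e_X^2$ makes the defining $V_{X,Y}$-argument trivial, while $d(a) = f([a] - [e_X a])$ by the choice of $d$.
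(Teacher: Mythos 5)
Your proposal is correct and follows essentially the same route as the paper's proof: the same map $\Phi(d)=\ov d$, the same identification of $\ker\Phi$ with $\IDer(M,A)$ via $d(e_Y)=0$ and Proposition~\ref{trivderprops}\eqref{trivderprops.2}, and the same formula $d(m)=f([e_Yme_X]-[e_Xme_X])$ for surjectivity. The only differences are cosmetic and all check out: you verify condition~\eqref{definesim.2} of Definition~\ref{definesim} by expanding $d(e_Ym(e_Y-e_X)ne_X)$ instead of computing each side separately, and in the Leibniz verification you replace the paper's appeal to Proposition~\ref{DOcharacterization} (for $[e_Xme_Xne_X]=[e_Xmne_X]$) by applying condition~\eqref{definesim.2} with $m$ replaced by $e_Xm$, which lets you get by with four cases instead of five.
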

\begin{proof}
Suppose that $d\colon M\to A$ is a derivation.  First we claim that $x\sim y$ implies $d(x)=d(y)$.  Define an equivalence relation $\equiv$ on $e_YMe_X$ by $x\equiv y$ if $d(x)=d(y)$.  We must show that $\equiv$ satisfies \eqref{definesim.1}--\eqref{definesim.3} of Definition~\ref{definesim}.  Property \eqref{definesim.3} is clear since $d$ annihilates $\nup Y\nup X$ by Proposition~\ref{trivderprops}\eqref{trivderprops.4}.  To prove Property \eqref{definesim.1}, first observe that $e_Yme_Ye_Xne_X=e_Yme_Xne_X$ because we have chosen $e_X < e_Y$.  Thus Proposition~\ref{trivderprops}\eqref{trivderprops.5} implies $d(e_Ymne_X)=d(e_Yme_Yne_X)$.  If $\sigma(m)\ngeq Y$, then we have, for $n\in M$, that $d(e_Ymne_X)=md(ne_X)+d(e_Ym)n=d(e_Ym)n$ and $d(e_Yme_Xne_X)=md(e_Xne_X)+d(e_Ym)n=d(e_Ym)n$.  Thus \eqref{definesim.2} is satisfied, as well.

As we've seen in the previous paragraph, $d$ annihilates $\nup Y\nup X$, and so $d$ descends to a well-defined mapping $W_{X,Y}\to A$, which can then be restricted to a mapping $\ov d\colon V_{X,Y}\to A$. We claim that $\ov d$ is $G_Y\times G_X^{op}$-equivariant.  Indeed, let $g\in G_Y$, $h\in G_X$ and $a\in e_YMe_X\setminus (\nup Y\nup X\cup G_X)$.  Then
\begin{align*}
\ov d(g([a]-[e_Xa])) &= d(ga)-d(ge_Xa)=gd(a)+d(g)a-gd(e_Xa)-d(g)a\\
                     &= g(d(a)-d(e_Xa))=g\ov d([a]-[e_Xa])\\
\ov d(([a]-[e_Xa])h) &= d(ah)-d(e_Xah) = ad(h)+d(a)h-e_Xad(h)-d(e_Xa)h\\
&= d(a)h-d(e_Xa)h= \ov d([a]-[e_Xa])h
\end{align*}
where the penultimate equality uses that $\sigma(a)\ngeq Y$ and $X<Y$.

Thus we have a $k$-linear mapping
\[\Phi\colon \Der(M,A)\to \Hom_{k[G_Y\times G_X^{op}]}(V_{X,Y},A)\] given by $\Phi(d)=\ov d$.  We claim that $\ker \Phi=\IDer(M,A)$.  Indeed, let $a\in A$ and let $d_a$ be the corresponding inner derivation.  Then, for $b\in e_YMe_X$, one computes
\[\ov d_a([b]-[e_Xb])=ba-ab-e_Xba+ae_Xb=-ab+ab=0\]
because $\sigma(b)\ngeq Y$ (as $X<Y$).  It follows that $\Phi(d_a)=0$.

Conversely, suppose that $d\in \ker \Phi$.  Trivially, $d(e_Y)=d(e_{Y}e_{Y}) = e_Yd(e_Y)+d(e_Y)e_Y=d(e_Y)+d(e_Y)=2d(e_Y)$ and so $d(e_Y)=0$.  From $\ov d=0$, we obtain, for $m\in M$,
\[0=\ov d([e_Yme_X]-[e_Xme_X]) = d(e_Yme_X)-d(e_Xme_X) = d(e_Yme_X)-d(e_X)m\] and so Proposition~\ref{trivderprops}\eqref{trivderprops.2} implies that $d(m)=d(e_X)m-md(e_X)-d(e_Y)m=d(e_X)m-md(e_X)$.  Thus $d$ is the inner derivation associated to $-d(e_X)$.  This completes the proof that $\ker \Phi=\IDer(M,A)$.  It remains to establish that $\Phi$ is an epimorphism.

Let $f\colon V_{X,Y}\to A$ be a $G_Y\times G_X^{op}$-module homomorphism.  Define a mapping $d\colon M\to A$ by
\[d(m) = f([e_Yme_X]-[e_Xme_X]).\]
Let us prove that $d$ is a derivation.  Let $m,n\in M$.  Then
\begin{equation}\label{prodmn}
d(mn) = f([e_Ymne_X]-[e_Xmne_X]).
\end{equation}
We have several cases.

\setcounter{Case}{0}
\begin{Case}
Suppose that $\sigma(m)\ngeq Y$ and $\sigma(n)\ngeq X$.  Then $e_Ymne_X, e_Xmne_X\in \nup Y\nup X$ and so $d(mn)=0$ by \eqref{prodmn}.  Also, $md(n)+d(m)n=0$.
\end{Case}

\begin{Case}
Assume that $\sigma(m)\geq Y$ and $\sigma(n)\ngeq X$.  Then $e_Xmne_X\in \nup Y\nup X$ and so \eqref{prodmn} becomes $d(mn)=f([e_Ymne_X])$.  On the other hand, since $md(n)+d(m)n=md(n)$ and $e_Xne_X\in \nup Y\nup X$ we have
\begin{align*}
md(n) &= mf([e_Yne_X]-[e_Xne_X])=mf([e_Yne_X])=e_Yme_Yf([e_Yne_X])\\
&= f([e_Yme_Yne_X]) = f([e_Ymne_X])=d(mn)
\end{align*}
since $[e_Yme_Yne_X]=[e_Ymne_X]$ by definition of $\sim$.  This completes this case.
\end{Case}

The next three cases assume that $\sigma(n)\geq X$.  In this context, the following formula is valid:
\begin{equation}\label{nabovecase}
\begin{split}
d(m)n &= f([e_Yme_X]-[e_Xme_X])n = f([e_Yme_X]-[e_Xme_X])e_Xne_X\\
&= f([e_Yme_Xne_X]-[e_Xme_Xne_X]).
\end{split}
\end{equation}

\begin{Case}
Suppose that $\sigma(m)\geq Y$ and $\sigma(n)\geq X$.  Then
\begin{equation}\label{mabove}
\begin{split}
md(n)&= mf([e_Yne_X]-[e_Xne_X])=e_Yme_Yf([e_Yne_X]-[e_Xne_X])\\
&= f([e_Yme_Yne_X]-[e_Yme_Xne_X])
\end{split}
\end{equation}
Since $\sigma(m)\geq Y> X$, Proposition~\ref{DOcharacterization} implies that $e_Xme_Xne_X=e_Xmne_X$.  The equality $[e_Yme_Yne_X]=[e_Ymne_X]$ holds by definition of $\sim$.  Taking this into account and adding equations \eqref{nabovecase} and \eqref{mabove} yields
\[md(n)+d(m)n=f([e_Ymne_X]-[e_Xmne_X])=d(mn),\]
as required.
\end{Case}

\begin{Case}
Assume that $\sigma(m)\geq X$, $\sigma(m)\ngeq Y$ and $\sigma(n)\geq X$.  Then $md(n)=0$ and so it suffices to show that the right hand sides of equations \eqref{prodmn} and \eqref{nabovecase} agree.  Proposition~\ref{DOcharacterization} implies that $[e_Xme_Xne_X]=[e_Xmne_X]$, whereas the definition of $\sim$ implies that $[e_Yme_Xne_X]=[e_Ymne_X]$.  Thus the right hand sides of \eqref{prodmn} and \eqref{nabovecase} agree, completing this case.
\end{Case}

\begin{Case}
Assume now that $\sigma(m)\ngeq X$ and $\sigma(n)\geq X$.  Again, we have $md(n)=0$ and so it suffices to show that the right hand sides of equations \eqref{prodmn} and \eqref{nabovecase} are equal.  In this case, $e_X(mne_X), e_X(me_Xne_X)\in \nup Y\nup X$ and so $d(mn)=f([e_Ymne_X])$ and $d(m)n = f([e_Yme_Xne_X])$.  But $[e_Yme_Xne_X]=[e_Ymne_X]$ by definition of $\sim$.  This completes this case.
\end{Case}

We have thus established that $d$ is a derivation.  Let us show that $\Phi(d)=f$.  Suppose that $a\in e_YMe_X\setminus (\nup Y\nup X\cup G_X)$.  Then $e_Yae_X=a=ae_X$ and so $\ov d([a]-[e_Xa])=d(a)-d(e_Xa)=f([a]-[e_Xa])-f([e_Ye_Xa]-[e_Xa])=f([a]-[e_Xa])$ because $e_Ye_X=e_X$.  This concludes the proof that $\Phi$ is an epimorphism, and hence the theorem is proved.
\end{proof}

We now state the main result of this paper.  It puts together what we have already proven.  See the discussion before Proposition~\ref{trivderprops} on how we go from $G_Y\times G_X^{op}$-modules to $G_Y\times G_X$-modules.

\begin{Thm}\label{thebigtheorem}
Let $M$ be a rectangular monoid and $k$ a splitting field for $M$ whose characteristic does not divide the order of any maximal subgroup of $M$.  Then $Q(kM)$ has vertex set $\coprod_{X\in \Lambda(M)}\Irr(G_X)$.    Let $X,Y\in \Lambda(M)$ and let $U\in \Irr(G_X), V\in \Irr(G_Y)$.  Let $V_{X,Y}$ be the $G_Y\times G_X$-module constructed above.  Then the number of arrows from $U$ to $V$ in the quiver of $kM$ is the multiplicity of $V\otimes_k U^*$ as an irreducible constituent of $V_{X,Y}$.
\end{Thm}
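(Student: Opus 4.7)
The plan is to assemble this theorem from the three case-by-case propositions already in place (the incomparable case \ref{incomparablecase}, the $X=Y$ case, and the strictly comparable case \ref{comparablecase}), together with the parametrization of simple modules in Corollary \ref{reptheoryofHsiaosgp} and the Hochschild--Mitchell interpretation of $\Ext^1$ from Corollary \ref{derivationcomputation}.

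First, I would record that the vertex set is $\coprod_{X \in \Lambda(M)} \Irr(G_X)$: this is immediate from Corollary \ref{reptheoryofHsiaosgp}, using that $k$ is a splitting field for each maximal subgroup (and hence for $M$). Next, fix $U \in \Irr(G_X)$ and $V \in \Irr(G_Y)$, viewed as simple $kM$-modules via $\rho_X$ and $\rho_Y$ respectively. The number of arrows from $U$ to $V$ is $\dim_k \Ext^1_{kM}(U,V)$, which by Corollary \ref{derivationcomputation} equals $\dim_k H^1(M, \Hom_k(U,V))$. The crucial observation is that since the $M$-bimodule structure on $A := \Hom_k(U,V)$ is inflated through $\rho_Y \times \rho_X^{op}\colon M \times M^{op} \to G_Y \times G_X^{op}$, the module $A$ is really a $k[G_Y \times G_X^{op}]$-module, so the three preceding propositions apply.

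Splitting into cases according to how $X$ and $Y$ compare in $\Lambda(M)$: if $X$ and $Y$ are incomparable, Proposition \ref{incomparablecase} gives $H^1(M,A) \cong \Hom_{k[G_Y \times G_X^{op}]}(V_{X,Y},A)$; if $X < Y$ or $Y < X$, the same conclusion holds by Proposition \ref{comparablecase} (applied to $M$ or $M^{op}$); and if $X = Y$ the analogous proposition gives $H^1(M,A) \cong H^1(G_X, A) \oplus \Hom_{k[G_X \times G_X^{op}]}(V_{X,X}, A)$, where the first summand vanishes by Proposition \ref{grouphochiszero} because the characteristic does not divide $|G_X|$. So in all cases $\dim_k \Ext^1_{kM}(U,V) = \dim_k \Hom_{k[G_Y \times G_X^{op}]}(V_{X,Y}, \Hom_k(U,V))$.

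It remains to translate this to the statement in the theorem. Reinterpreting $G_X^{op}$-actions as $G_X$-actions by inversion, $\Hom_k(U,V)$ becomes the $k[G_Y \times G_X]$-module $V \otimes_k U^*$, and $V_{X,Y}$ becomes a $k[G_Y \times G_X]$-module. Since $k$ splits each maximal subgroup, $k[G_Y \times G_X] \cong kG_Y \otimes_k kG_X$ is split semisimple, and its simple modules are exactly the absolutely irreducible tensor products $V' \otimes_k (U')^*$ with $U' \in \Irr(G_X)$ and $V' \in \Irr(G_Y)$. Schur's lemma then identifies $\dim_k \Hom_{k[G_Y \times G_X]}(V_{X,Y}, V \otimes_k U^*)$ with the multiplicity of $V \otimes_k U^*$ as an irreducible constituent of $V_{X,Y}$, which is the desired formula.

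The main obstacle has already been surmounted in proving the three representability propositions, which required carefully identifying the relations cutting out $V_{X,Y}$ from $k[e_YMe_X]$ so that every derivation factors through it and every $G_Y \times G_X^{op}$-equivariant map out of it lifts. What remains here is mostly bookkeeping, with the one genuine subtlety being the correct contragredient convention when passing from $G_X^{op}$ to $G_X$ and verifying that $V \otimes_k U^*$ is indeed the $G_Y \times G_X$-module corresponding to $\Hom_k(U,V)$ under this translation.
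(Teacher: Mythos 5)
Your proposal is correct and follows essentially the same route as the paper, which presents Theorem~\ref{thebigtheorem} precisely as an assembly of the three representability propositions together with the translation from $G_Y\times G_X^{op}$-modules to $G_Y\times G_X$-modules and the identification of $\Hom_k(U,V)$ with $V\otimes_k U^*$. Your handling of the $X=Y$ case via Proposition~\ref{grouphochiszero} and of $Y<X$ via duality matches the paper's treatment exactly.
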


Theorem~\ref{thebigtheorem} simplifies when each maximal subgroup of $M$ is trivial.  The subclass of rectangular monoids with trivial maximal subgroups is known as $\pv{DA}$ in the semigroup theory literature~\cite{qtheor,Almeida:book}; it was first studied by Sch\"utzenberger in the context of automata theory~\cite{Schutznonambig}.

\begin{Cor}\label{aperiodiccase}
Let $M$ be a rectangular monoid with trivial maximal subgroups and $k$ a field.  Then $Q(kM)$ has vertex set $\Lambda(M)$.  The number of arrow from $X$ to $Y$ is the dimension of the vector space $V_{X,Y}$ constructed above.
\end{Cor}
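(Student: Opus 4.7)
The plan is to derive Corollary~\ref{aperiodiccase} as an immediate specialization of Theorem~\ref{thebigtheorem}. The first step is to verify the hypotheses of that theorem under the assumption that every maximal subgroup $G_X$ is trivial: any field $k$ is automatically a splitting field for each $G_X$ (the trivial module is the only irreducible, and is one-dimensional, hence absolutely irreducible), and the condition $\mathrm{char}(k)\nmid |G_X|=1$ holds vacuously. By the remark following Corollary~\ref{reptheoryofHsiaosgp} (namely that $k$ splits $M$ iff it splits each maximal subgroup), $k$ is then a splitting field for $M$ itself, and Theorem~\ref{thebigtheorem} applies.

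The second step is to translate the conclusion of Theorem~\ref{thebigtheorem}. Since each $\Irr(G_X)$ is a singleton, the disjoint union $\coprod_{X\in \Lambda(M)}\Irr(G_X)$ is in canonical bijection with $\Lambda(M)$, yielding the claimed vertex set. For the arrow count, let $U$ and $V$ be the unique trivial simple modules of $G_X$ and $G_Y$; then $V\otimes_k U^*$ is the unique (one-dimensional) simple $k[G_Y\times G_X]$-module. Because $G_Y\times G_X$ is trivial, every $k[G_Y\times G_X]$-module is a direct sum of copies of this simple, and so the multiplicity of $V\otimes_k U^*$ as an irreducible constituent of $V_{X,Y}$ equals $\dim_k V_{X,Y}$, which is precisely the arrow count claimed.

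Since the corollary is a pure specialization, there is no genuine obstacle: all of the substantive work has already been carried out in Propositions~\ref{incomparablecase} and~\ref{comparablecase} (together with the diagonal case $X=Y$), which construct $V_{X,Y}$ in each of the three situations and are packaged into Theorem~\ref{thebigtheorem}. The only thing a careful writer needs to check is that the constructions of $V_{X,Y}$ in the three cases (incomparable, equal, strictly comparable) go through without any reference to the group structure of the maximal subgroups beyond triviality, which is clear from inspection of those proofs.
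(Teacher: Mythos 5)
Your proposal is correct and is precisely the argument the paper intends: the corollary is stated as an immediate specialization of Theorem~\ref{thebigtheorem}, and your two observations (that the hypotheses are vacuous for trivial maximal subgroups, and that multiplicity of the unique one-dimensional simple equals $\dim_k V_{X,Y}$) are exactly what makes it immediate. No gaps.
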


\subsection{Proofs of the results of Section~\ref{stateresults}}
The remainder of this section is devoted to deriving the results of Section~\ref{stateresults} from Theorem~\ref{thebigtheorem}.  We begin with the proof of Theorem~\ref{orthogroupquiver}.

\begin{proof}[Proof of Theorem~\ref{orthogroupquiver}]
Let $U\in \Irr(G_X)$ and $V\in \Irr(G_Y)$.  Suppose first that $X$ and $Y$ are incomparable.  Then by regularity of $M$ we have $e_YMe_X\subseteq e_YMe_XMe_YMe_X\subseteq  \nup Y\nup X$.  Thus $V_{X,Y}=0$ and so we have no arrows from $U$ to $V$ in the incomparable case.  Next suppose that $X=Y$.  Then regularity implies that $I_X\subseteq I_XI_X\subseteq (\nup X)^2$.  Thus $V_{X,X}=0$, and so again there are no arrows from $U$ to $V$.

We now handle the case $X<Y$ as the case $Y<X$ is dual.  Assume $e_X<e_Y$. First observe that $e_YMe_X\setminus e_YL_{e_X}\subseteq \nup Y\nup X$ by regularity.  Thus it remains to verify that the equivalence relation $\equiv$ in Theorem~\ref{orthogroupquiver} on $e_YL_{e_X}$ agrees with $\sim$ from Definition~\ref{definesim}.  The equivalence of (a) with \eqref{definesim.1} of Definition~\ref{definesim} is straightforward.  Clearly \eqref{definesim.2} of Definition~\ref{definesim} implies (b) because $x=e_Yexe_X\sim e_Yee_Xxe_X=e_Yee_Xx'e_X\sim e_Yex'e_X=x'$ if $e\in E(e_YMe_Y)$ with $\sigma(e)\neq e_Y$.

To see that (b) implies \eqref{definesim.2} we proceed as follows.  Suppose $\sigma(m)\ngeq Y$.  If $\sigma(m)\ngeq X$, then $e_Ymne_X$ and $e_Yme_Xne_X$ belong to $\nup Y\nup X$ by regularity, and so we may assume $\sigma(m)\geq X$.  If $\sigma(m)=X$, then $MmM=Me_XM$ by regularity and so $e_Ymne_X=e_Yme_Xne_X$ by Proposition~\ref{DOcharacterization}.  Thus we may assume $\sigma(m)>X$ but $\sigma(m)\ngeq Y$.  Let $e=(e_Yme_Y)^{\omega}$.  Then $e\in E(e_YMe_Y)$.  Observe that $ee_Ym=(e_Yme_Y)^{\omega}e_Ym=(e_Ym)^{\omega}e_Ym=e_Ym$ as $M$ is an orthogroup and hence satisfies $x^{\omega}x=x$ for all $x\in M$. It follows that $ee_Ymne_X=e_Ymne_X$ and $ee_Yme_Xne_X=e_Yme_Xne_X$.
Also observe that $e_Xe_Ymne_X=e_Xmne_X=e_Xe_Yme_Xne_X$ by Proposition~\ref{DOcharacterization}.  It follows that $e_Ymne_X\equiv e_Yme_Xne_X$.  Thus \eqref{definesim.2} of Definition~\ref{definesim} is equivalent to (b) in Theorem~\ref{orthogroupquiver}.  This completes the proof.
\end{proof}

Next we turn to the case of monoids in $\pv {DG}$.

\begin{proof}[Proof of Theorem~\ref{quiverDG}]
Again, we assume $U\in \Irr(G_X)$ and $V\in \Irr(G_Y)$.  We proceed case by case.  Suppose first that $X,Y$ are incomparable.  We claim that, for all $m,n\in M$,
\begin{equation}\label{annoyingrelation}
e_Ymne_X+e_Yme_Ye_Xne_X-e_Yme_Yne_X-e_Yme_Xne_X
\end{equation}
is in the span of $\nup Y\nup X\cap e_YMe_X$.  Indeed, if $\sigma(m)\geq Y$, then since $M\in \pv{DG}$ we have $e_Ym=e_Yme_Y$ and so \eqref{annoyingrelation} is $0$. The case $\sigma(n)\geq X$ is dual.  If $m\in \nup Y$ and $n\in \nup X$, then trivially \eqref{annoyingrelation} is in the span of $\nup Y\nup X$.  As any element of $e_YMe_X\setminus \nup Y\nup X$ must be a null element when $X$ and $Y$ are incomparable, we see that $V_{X,Y}\cong k[\Null(e_YMe_X)\setminus \nup Y\nup X]$.  But this is $k[\Irrm_{\mathscr K(M)}(e_X,e_Y)]$ by Proposition~\ref{simplifyirred}.

Next assume that $X=Y$.  If $m,n\in \nup X$, then trivially $e_Xmne_X$, $e_Xme_Xne_X$ belong to $(\nup X)^2\cap I_X$.  If $\sigma(m)\geq X$, then $e_Xm=e_Xme_X$ and so $e_Xmne_X=e_Xme_Xne_X$.  Similarly, if $\sigma(n)\geq X$, then $e_Xmne_X=e_Xme_Xne_X$.  Thus \eqref{defsimeeqcase.1} of Definition~\ref{defsimeeqcase} can be removed.  Clearly, $I_X\setminus (\nup X)^2=\Null(e_XMe_X)\setminus (\nup X)^2$.  Thus $V_{X,Y} = k[\Null(e_YMe_X)\setminus \nup Y\nup X]=k[\Irrm_{\mathscr K(M)}(e_X,e_Y)]$ in this case as well.

The cases $X<Y$ and $Y<X$ are dual, so we just handle $X<Y$.  One necessarily has in this case that $e_X<e_Y$.  First observe that if $\sigma(m)\geq Y$, then $e_Ym=e_Yme_Y$ and so \eqref{definesim.1} of Definition~\ref{definesim} holds.  If $m\in \nup Y$, $n\in \nup X$, then $e_Ymne_X$, $e_Yme_Yne_X$, $e_Yme_Xne_X$ all belong to $\nup Y\nup X\cap e_YMe_X$.  If $m\in \nup Y$ and $\sigma(n)\geq X$, then $e_Xne_X=ne_X$ and so $e_Ymne_X=e_Yme_Xne_X=e_Yme_Ye_Xne_X=e_Yme_Yne_X$.  Thus \eqref{definesim.1} and \eqref{definesim.2} of Definition~\ref{definesim} are automatic. In particular, no element of $e_YMe_X\setminus \nup Y\nup X$ is identified by $\sim$ with an element of $\nup Y\nup X$. Thus, since $M\in \pv {DG}$, it follows that $V_{X,Y}$ has $k$-basis consisting of all elements of the form $a$ with $a\in e_YMe_X\setminus (\nup Y\nup X\cup G_X)=\Null(e_YMe_X)\setminus \nup Y\nup X$.  Thus, once again, $V_{X,Y}\cong k[\Null(e_YMe_X)\setminus \nup Y\nup X]=k[\Irrm_{\mathscr K(M)}(e_X,e_Y)]$.  This completes the proof.
\end{proof}

It remains to prove Theorem~\ref{Rtrivialquiver}.

\begin{proof}[Proof of Theorem~\ref{Rtrivialquiver}]
Let $X,Y\in \Lambda(M)$.  Once more we proceed case by case.   Suppose first that $X,Y$ are incomparable.  We claim that modulo $k[\nup Y\nup X\cap e_YMe_X]$, the element in \eqref{annoyingrelation} above is either $0$ or of the form $e_Ymne_X-e_Yme_X$.  Indeed, if $m\in\nup Y$, $n\in\nup X$, then trivially \eqref{annoyingrelation} belongs to $k[\nup Y\nup X\cap e_YMe_X]$.  Suppose next that $\sigma(m)\geq Y$.  Then $e_Ym=e_Yme_Y$ and so \eqref{annoyingrelation} is $0$.  If $m\in \nup Y$ and $\sigma(n)\geq X$, then $e_Xne_X=e_X$ and $e_Yme_Ye_Xne_X$, $e_Yme_Yne_X$ belong to $\nup Y\nup X$.  Thus \eqref{annoyingrelation} reduces to $e_Ymne_X-e_Yme_X$ modulo $k[\nup Y\nup X\cap e_YMe_X]$.  We conclude that $V_{X,Y}\cong kZ_{X,Y}$ and so $\dim V_{X,Y}=|Z_{X,Y}|$.

Next, suppose that $X=Y$.  First we show that $\sim$ and $\equiv$ coincide.  If $m,n\in \nup X$, then $e_Xmne_X,e_Xme_Xne_X\in (\nup X)^2$ and so are equivalent under both equivalence relations.  If $\sigma(m)\geq X$, then $e_Xm=e_Xme_X$ and so $e_Xmne_X=e_Xme_Xne_X$.  If $m\in \nup X$ and $\sigma(n)\geq X$, then $e_Xne_X=e_X$ and so $e_Xme_Xne_X=e_Xme_X$.  Thus \eqref{defsimeeqcase.1} of Definition~\ref{defsimeeqcase} reduces in this case to the definition of $\equiv$. It follows that $V_{X,X}\cong k[Z_{X,X}\setminus\{[e_X]\}]$ where $[e_X]$ is the class of $e_X$ (which is a singleton).  Thus $\dim V_{X,X}=|Z_{X,X}|-1$.

The next case is $X<Y$.  We may assume that $e_X<e_Y$.  It suffices to show that $\equiv$ coincides with $\sim$.  If $m\in \nup Y$ and $n\in \nup X$, then $e_Ymne_X$, $e_Yme_Yne_X$ and $e_Yme_Xne_X$ all belong to $\nup Y\nup X$.  If $\sigma(m)\geq Y$, then $e_Ym=e_Yme_Y$ and so $e_Ymne_X = e_Yme_Yne_X$, whence \eqref{definesim.1} of Definition~\ref{definesim} holds.  If $\sigma(m)\ngeq Y$ and $\sigma(n)\geq X$, then also $\sigma(e_Yn)\geq X$ and $e_Xe_Yne_X=e_X=e_Xne_X=e_X$.  Thus $\sim$ reduces to $\equiv$.  It follows that $W_{X,Y}\cong kZ_{X,Y}$.  Going to $V_{X,Y}$ cuts the dimension down by $1$, yielding the result.

Finally, we deal with the case $Y<X$.  Notice that this case is not dual to $X<Y$ because $M^{op}$ need not be $\R$-trivial.  Nonetheless, the computation is similar to the previous one, but simpler.  We may assume $e_Y<e_X$.  The dual of $\sim$ from Definition~\ref{definesim} is given by
\begin{enumerate}
\item\label{noname.1} $e_Ymne_X\sim e_Yme_Xne_X$ for $m,n\in M$;
\item\label{noname.2} if $\sigma(n)\ngeq X$, then $e_Ymne_X\sim e_Yme_Yne_X$ for all $m\in M$;
\item\label{noname.3} $z\sim z'$ for all $z,z'\in \nup Y\nup X$.
\end{enumerate}

We aim to show for an $\R$-trivial monoid, that the restriction of $\sim$ to $Z_{X,Y}=e_YMe_X\setminus \nup Y\nup X$ coincides with $\equiv$.
If $\sigma(m)\geq Y$, then $e_Ym=e_Yme_Y$ and so Property \eqref{noname.2} is satisfied.  If $\sigma(m)\ngeq Y$ and $\sigma(n)\ngeq X$, then $e_Ymne_X, e_Yme_Yne_X\in \nup Y\nup X$.  Thus we can remove \eqref{noname.2} from the definition of $\sim$.  If $\sigma(m)\geq Y$, then $e_Ym=e_Yme_Y$ and so $e_Ymne_X=e_Yme_Yne_X=e_Yme_Ye_Xne_X=e_Yme_Xne_X$ because $e_Y<e_X$.  Hence \eqref{noname.1} is only of interest when $\sigma(m)\ngeq Y$ and $\sigma(n)\geq X$.  In this case $e_Xne_X=e_X$ and so \eqref{noname.1} boils down to the definition of $\equiv$.  The remainder of the proof is handled as in the previous case.
\end{proof}


\end{document}